\documentclass[reqno,twoside,11pt]{amsart}
\usepackage{amsmath, amsfonts, amssymb, esint, amsthm, nicefrac, bbm, dsfont}
\usepackage{epsfig, graphicx}

\setlength{\hoffset}{-1.7cm}
\setlength{\voffset}{0cm}
\setlength{\textwidth}{15.8cm}
\setlength{\textheight}{22cm}

\newcommand{\dx}{~\mathrm{d}x}
\newcommand{\sym}{\mathrm{sym}}
\newcommand{\curl}{\mathrm{curl}}
\newcommand{\so}{\mathrm{so}}
\newcommand{\Id}{\mathrm{Id}}

\newcommand{\R}{\mathbb{R}}
\def\endproof{\hspace*{\fill}\mbox{\ \rule{.1in}{.1in}}\medskip }
\newcommand*{\dbar}[1]{\bar{\bar{#1}}}

\numberwithin{equation}{section}
\theoremstyle{plain}

\newtheorem{theorem}{Theorem}[section]
\newtheorem{lemma}[theorem]{Lemma}
\newtheorem{corollary}[theorem]{Corollary}

\newtheorem{definition}[theorem]{Definition}

\theoremstyle{definition}

\begin{document}
\title[The Monge-Amp\`ere system]{The Monge-Amp\`ere system: convex integration in arbitrary
  dimension and codimension}
\author{Marta Lewicka}
\address{M.L.: University of Pittsburgh, Department of Mathematics, 
139 University Place, Pittsburgh, PA 15260}
\email{lewicka@pitt.edu} 

%\date{\today}
\thanks{M.L. was partially supported by NSF grant DMS-2006439. 
AMS classification: 35Q74, 53C42, 35J96, 53A35}

\begin{abstract}
In this paper, we study flexibility of weak solutions to the Monge-Amp\`ere system (MA) via convex
integration. % for its weak formulation (VK). 
This new system of Pdes is an extension of the Monge-Amp\`ere
equation in $d=2$ dimensions, naturally arising  from the prescribed curvature
problem and closely related to the classical problem of isometric immersions (II).

Our main result achieves density in the set of subsolutions, of the H\"older 
$\mathcal{C}^{1,\alpha}$ solutions to the Von K\'arm\'an system (VK)
which is the weak formulation of (MA). The regularity exponent $\alpha$ is any
exponent satisfying $\alpha<\frac{1}{1+
  d(d+1)/k}$ where $d$ is an arbitrary dimension and $k$ an
arbitrary codimension of the problem. At $k=1$, this agrees with the 
regularity $\mathcal{C}^{1,\alpha}$ for (II) with any $\alpha 
<\frac{1}{1+d(d+1)}$, proved by Conti, Delellis and Szekelyhidi in \cite{CS}. 
At $d=2, k=1$, this extends the initial findings by the author and Pakzad in
\cite{lewpak_MA} for (MA).

Our result seems to be optimal, from the technical viewpoint, for the
corrugation-based convex integration scheme. In particular, it covers the 
codimension interval $k\in \big(1, d(d+1)\big)$ so far uncharted 
even for the system (II), since the 
regularity $\mathcal{C}^{1,\alpha}$ with any $\alpha <1$ achieved by
K\"allen in \cite{Kallen}, strictly requires a large codimension.
Our second main result reproduces K\"allen's result in the context of (MA), obtaining
density in the set of
subsolutions, of $\mathcal{C}^{1,\alpha}$ regular solutions for any
$\alpha<1$ whenever $k\geq d(d+1)$. 

As an application of our results for (VK), we derive an energy scaling bound in the quantitative
immersability of Riemannian metrics, for nonlinear energy functionals 
modelled on the energies of deformations of thin prestrained
films in the nonlinear elasticity \cite{lew_book}.
\end{abstract}

\maketitle
%\tableofcontents

\section{Introduction}

This paper concerns regularity and density of solutions to a new system of Pdes, called the
Monge-Amp\`ere system (\ref{MA}), which is the 
multi-dimensional version of the Monge-Amp\`ere equation arising from the prescribed curvature
problem.  As explained below, (\ref{MA}) is also closely related to the problem of isometric immersions
and the dimension reduction of thin films. Namely, given 
$F:\omega\to\R^{d^4}$ on a domain $\omega\subset \R^d$, we look for a
vector field $v$ such that:
\begin{equation*}\tag{MA}
\begin{split}
& v:\omega\to \R^k, 
\\ & \mathfrak{Det} \,\nabla^2v \doteq \big[\langle \partial_i\partial_s
v, \partial_j\partial_tv\rangle -
\langle \partial_i\partial_tv, \partial_j\partial_sv\rangle\big]_{i,j,
  s,t:1\ldots d}= F\quad\mbox{ in }\;\omega.
\end{split}
\end{equation*}
When $d=2$, $k=1$, recall that the Gaussian curvature of a surface described as the
graph of $v$ is $\kappa = \frac{\det\nabla^2 v}{(1+|\nabla
v|^2)^2}$. Replacing $v$ by a family of shallow displacements
$\{\epsilon v\}_{\epsilon\to 0}$, we obtain:
$$\kappa = \frac{\epsilon^2 \det\nabla^2 v}{(1+\epsilon^2|\nabla
  v|^2)^2} = \epsilon^2 \det\nabla^2v + o(\epsilon^2),$$
which yields the classical Monge-Amp\'ere equation:
\begin{equation*}
\begin{split}
& \, v:\omega\to \R, 
\\ & \det\nabla^2 v  \doteq \partial_{11}
v  \partial_{22} v - (\partial_{12} v)^2 = f\quad\mbox{ in }\;\omega,
\end{split}
\end{equation*}
as the prescription of the (leading order term of) Gaussian curvature of a
shallow surface $\{(x,\epsilon v(x)); \; x\in \omega\}\subset\R^{3}$.
To apply the same heuristics in arbitrary dimension
$d$ and codimension $k$, we consider the family of Riemannian metrics
generated by immersions $\{u^\epsilon=(id_d, \epsilon
v)\}_{\epsilon\to 0}$ and compute their Riemann curvatures in:
\begin{equation*}
\begin{split}
{Riem}\big((\nabla u)^T\nabla u\big) & =
{Riem}\big(\mbox{Id}_d + \epsilon^2(\nabla v)^T\nabla v\big) = 
-\frac{\epsilon^2}{2}\mathfrak{C}^2\big((\nabla v)^T\nabla v\big) + o(\epsilon^2)
\\ & = \epsilon^2 \mathfrak{Det} \,\nabla^2v + o(\epsilon^2).
\end{split}
\end{equation*}
The second order, linear operator $\mathfrak{C}^2$ is given in (\ref{C2}) below, and it reduces to 
$curl\,curl$ in our previous computation of $\kappa$, consistent with
having in that context: 
$\mbox{curl}\,\mbox{curl}(\nabla v\otimes \nabla v)= -2\det\nabla^2v$. 
Thus, the problem (\ref{MA}) arises by prescribing the (leading order
terms of) full Riemann
curvature tensor of the shallow manifold $\{(x,\epsilon v(x)); \; x\in \omega\}\subset\R^{d+k}$.

\bigskip

\noindent A necessary condition for (\ref{MA}) to be well posed is
that $F\in Range(\mathfrak{C}^2)$, with the equivalent compatibility conditions for
this to hold, specified in (\ref{sym_B}). Under these conditions
$F=-\mathfrak{C}^2(A)$ for some matrix field $A:\omega \to \R^{d\times
  d}_\sym$, and consequently (\ref{MA}) can be restated as:
\begin{equation*}
\begin{split}
& v:\omega\to \R^k, 
\\ & \mathfrak{C}^2(\frac{1}{2} (\nabla v)^T\nabla v) = \mathfrak{C}^2(A)\quad\mbox{ in }\;\omega.
\end{split}
\end{equation*}
Observing that $Kernel(\mathfrak{C}^2)$ consists of
symmetrized gradients, the above reduces to the weak formulation of
(\ref{MA}) called the Von K\'arm\'an system, in which we look for $v$,
$w$ such that:
\begin{equation*}\tag{VK}\label{VK}
\begin{split}
& v:\omega\to \R^k, \quad w:\omega\to\R^d,
\\ & \frac{1}{2}(\nabla v)^T\nabla v + \sym \nabla w = A\quad\mbox{ in }\;\omega.
\end{split}
\end{equation*}
When $d=2$, $k=1$, the left hand side of (\ref{VK}) is known in the theory of
elasticity as the the Von
K\'arm\'an stretching
content whose energy measures the stretching of a thin film with midplate
$\omega$, subject to the out of plane displacement $v$ and the in
plane displacement $w$. 

\bigskip

\noindent The closely related problem to (\ref{MA}) and (\ref{VK}) is the problem
of finding an isometric immersion $u$ of the given Riemannian metric
$g:\omega\to\R^{d\times d}_{\sym, >}$, into a higher dimensional space $\R^{d+k}$:
\begin{equation*}\tag{II}\label{II}
\begin{split}
& u:\omega\to \R^{d+k},
\\ & (\nabla u)^T\nabla u = g\quad\mbox{ in }\;\omega.
\end{split}
\end{equation*}
Indeed, (\ref{II}) reduces to (\ref{VK}) when equating the leading order terms in
the family of Riemannian metrics $\{\Id_d+2\epsilon^2 A\}_{\epsilon\to 0}$
and the metrics generated by the immersions $\{\bar u^\epsilon =
({id}_d+\epsilon^2w, \epsilon v)\}_{\epsilon\to 0}$:
$$(\nabla \bar u^\epsilon)^T\nabla \bar u^\epsilon = \mbox{Id}_d +
\epsilon^2 \big((\nabla v)^T\nabla v + 2\, \sym\nabla w\big) + o(\epsilon^2).$$
In this sense, the three problems (\ref{MA}), (\ref{VK}) and
(\ref{II}) are intrinsically related. 

\bigskip

\noindent The purpose of this paper is to
investigate existence of H\"older continuous solutions to (\ref{VK})
and (\ref{MA}), using the convex integration technique and motivated
by the existing applications of this technique to (\ref{II}).
We recall that when posed in arbitrary dimension $d$ but codimension $k=1$,
it has been shown in \cite[Theorem 1.1]{CDS} that any local
subsolution to (\ref{II}) can be uniformly approximated
by a sequence of solutions $\{u_n\}_{n=1}^\infty$ of regularity
$\mathcal{C}^{1,\alpha}$, for any H\"older exponent
$\alpha<\frac{1}{1+2d_*}$ where  $d_*=d(d+1)/2$ is the
dimension of $\R^{d\times d}_\sym$. On the other hand, as showed in
\cite{Kallen}, regularity $\mathcal{C}^{1,\alpha}$ with any $\alpha <1$ can be achieved in
sufficiently high codimension $k$; however even for the local result
this argument strictly requires $k\geq 2d_*$, whereas it yields no outcome for
$k<2d_*$.  These two results, albeit both relying  
on convex integration, use different
constructions of the cascade of perturbations: Kuiper's corrugations
in \cite{CDS} and Nash's spirals in \cite{Kallen}, and one cannot be
deduced from the other. 

\medskip

\noindent To our knowledge, there has been no result
for the codimension interval $k\in (1, 2d_*)$
interpolating the regularity in \cite{CDS} and \cite{Kallen}, 
or even improving the exponent $\frac{1}{1+2d_*}$ without the requirement $k\geq 2d_*$.
\footnote{ $\,$After submission of this paper, there appeared a new preprint \cite{CS_new} which
states flexibility up to $\mathcal{C}^{1,\frac{1}{d+2}}$ 
for (\ref{II}) when $k=d$. This is consistent with our
result, as $\frac{1}{1+2d_*/d} = \frac{1}{d+2}$. If fact, we
expect that the same techniques as in the present paper may be
applied for the system (\ref{II}) as well, with the same regularity
exponents as in Theorem \ref{th_final} holding locally and for compact
manifold cases.}
In our paper we achieve precisely this goal, for the
system (\ref{VK}). Our main result states that any $\mathcal{C}^1$-regular pair $(v,w)$ 
which is a subsolution of (\ref{VK}), can be uniformly approximated
by a sequence of solutions $\{(v_n,w_n)\}_{n=1}^\infty$ of regularity
$\mathcal{C}^{1,\alpha}$ for any H\"older exponent
$\alpha<\frac{1}{1+2d_*/k}$, in case of {\em arbitrary $d$ and $k$}. Our proof
only uses corrugations, extending the construction in \cite{CDS}
in an optimal manner. Clearly, the obtained critical regularity exponent $1/2$
at $k=2d_*$ is inferior to the exponent $1$ from a version of the same
construction as in \cite{Kallen}, that we also demonstrate in our
paper. We expect that the superposition of both techniques should
yield a tighter interpolation, which is the subject of the ongoing research.

\bigskip

\noindent We state our results and offer further discussion on the
relation among the systems (\ref{MA}), (\ref{VK}), (\ref{II}), as well
as their application to the energy scaling bound for thin
multidimensional films, in the subsections below.

\subsection{Convex integration by corrugations, arbitrary $\mathbf{d}$ and $\mathbf{k}$}
The following theorem is our main result.  We refer to it as {\em flexibility} of (\ref{VK}) up to
$\mathcal{C}^{1,\frac{1}{1+2d_*/k}}$:

\begin{theorem}\label{th_final}
Let $\omega\subset\R^d$ be an open, bounded domain. Given
two vector fields $v\in\mathcal{C}^1(\bar\omega,\R^k)$, $w\in\mathcal{C}^1(\bar\omega,\R^d)$ and
a matrix field $A\in\mathcal{C}^{0,\beta}(\bar\omega,\R^{d\times d}_\sym)$, assume that:
$$\mathcal{D}=A-\big(\frac{1}{2}(\nabla v)^T\nabla v + \sym\nabla
w\big) \quad\mbox{ satisfies } \quad \mathcal{D}>c\,\Id_d \; \mbox{ on } \; \bar\omega,$$
for some $c>0$, in the sense of matrix inequalities. Fix $\epsilon>0$ and let:
$$0<\alpha<\min\Big\{\frac{\beta}{2},\frac{1}{1+d(d+1)/k}\Big\}.$$ 
Then, there exists $\tilde v\in\mathcal{C}^{1,\alpha}(\bar \omega,\R^k)$ and
$\tilde w\in\mathcal{C}^{1,\alpha}(\bar\omega,\R^d)$ such that the following holds:
%\begin{equation}\label{stage_est}
\begin{align*}
& \|\tilde v - v\|_0\leq \epsilon, \quad \|\tilde w - w\|_0\leq \epsilon,
\tag*{(\theequation)$_1$}\refstepcounter{equation} \label{FFbound1}\vspace{1mm}\\ 
& A -\big(\frac{1}{2}(\nabla \tilde v)^T\nabla \tilde v + \sym\nabla
\tilde w\big) =0 \quad \mbox{ in }\;\bar\omega.
\tag*{(\theequation)$_2$} \label{FFbound2}
\end{align*}
%\end{equation}
\end{theorem}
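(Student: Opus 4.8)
The plan is to prove Theorem~\ref{th_final} by the standard iteration scheme of convex integration: construct a sequence of pairs $(v_q, w_q)$ converging in $\mathcal{C}^1$ to a solution $(\tilde v, \tilde w)$, while keeping track of the \emph{defect} (or \emph{deficit}) $\mathcal{D}_q = A - \big(\frac12(\nabla v_q)^T\nabla v_q + \sym\nabla w_q\big)$, which should decrease geometrically to $0$. The heart of the matter is a single \emph{stage} proposition: given a stage pair $(v,w)$ with positive-definite deficit $\mathcal{D}$, and a frequency parameter $\lambda$ large, produce a new pair $(\tilde v, \tilde w)$ with the $\mathcal{C}^0$-distance controlled by $\|\mathcal{D}\|_0^{1/2}/\lambda$ (roughly), the new deficit $\tilde{\mathcal{D}}$ small (of order $\|\mathcal{D}\|_0/N$ after $N$ steps, plus error terms), and the $\mathcal{C}^1$-norms of the increments controlled by $\lambda\|\mathcal{D}\|_0^{1/2}$. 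Each stage is itself decomposed into \emph{steps}, one for each of the $d_* = d(d+1)/2$ directions needed to span $\R^{d\times d}_\sym$, and each step adds a corrugation (a highly oscillatory one-dimensional profile à la Kuiper) in both the $v$ and $w$ components along a rank-one direction $a\otimes a$.

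The key computational step is the \emph{one primitive / one corrugation} lemma. To cancel a primitive deficit of the form $a^2\, \xi\otimes\xi$ with $\xi$ a unit vector and $a\geq 0$ small, I would set, on the fast variable $s = \lambda\, x\cdot\xi$,
\begin{equation*}
\tilde v(x) = v(x) + \frac{1}{\lambda}\Gamma_1(x,s)\,\eta, \qquad
\tilde w(x) = w(x) + \frac{1}{\lambda}\big(\Gamma_2(x,s)\,\xi + \text{corrections}\big),
\end{equation*}
where $\eta\in\R^k$ is a fixed unit vector orthogonal to $\nabla v$ (this is where codimension enters: to ``spend'' the $v$-correction cheaply one wants the new direction unused, but with only $k$ directions available one must, as in \cite{CDS}, recycle directions and pay for it, or — when $k\geq d(d+1)$ — never recycle, recovering K\"allen's exponent), and $\Gamma_1,\Gamma_2$ are profiles (built from the functions $\gamma$ used in \cite{CDS}: essentially $\Gamma_1 \sim \frac{a}{|\eta|}\sin s$, with $\Gamma_2$ chosen so that the $\sym\nabla$ term produces the needed $-\frac12 a^2\cos^2$-type contribution, i.e.\ $\Gamma_2' \approx -\frac12\Gamma_1'^2$ plus a linear-in-$s$ term that averages out). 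A direct expansion of $\frac12(\nabla\tilde v)^T\nabla\tilde v + \sym\nabla\tilde w$ then shows the deficit along $\xi\otimes\xi$ is killed up to: (i) terms with a factor $1/\lambda$ coming from $x$-derivatives of the $\Gamma$'s and of $a,\xi$ (controlled by choosing $\lambda$ large and by mollifying $\mathcal{D}$ first), and (ii) oscillatory terms that integrate to something small but contribute to the $\mathcal{C}^0$ norm of the new deficit only at order $1/\lambda$. Summing over the $d_*$ steps, one uses a partition-of-unity / Schauder-type decomposition $\mathcal{D} = \sum_{i=1}^{d_*}a_i^2\,\xi_i\otimes\xi_i$ with $a_i\in\mathcal{C}^0$ bounded below away from the boundary — valid precisely because $\mathcal{D} > c\,\Id_d$.

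The iteration bookkeeping is then the routine but delicate part: choose frequencies $\lambda_q = \lambda_0^{b^q}$ growing doubly-exponentially, mollification scales $\ell_q$, and deficit sizes $\delta_q \sim \delta_0\, \lambda_q^{-2\gamma}$; demand that the $\mathcal{C}^0$-increments $\|v_{q+1}-v_q\|_0 \lesssim \delta_q^{1/2}/\lambda_q$ are summable (gives $\mathcal{C}^0$-convergence and the $\epsilon$-closeness~\ref{FFbound1} by starting with $q$ large / $\lambda_0$ large), that the $\mathcal{C}^1$-increments $\|\nabla(v_{q+1}-v_q)\|_0 \lesssim \delta_q^{1/2}\lambda_q$ are bounded by $\delta_q^{1/2}\lambda_q \lesssim \lambda_{q}^{\alpha}\cdot(\text{summable})$ after interpolation with the $\mathcal{C}^2$-bound $\lesssim \delta_q^{1/2}\lambda_q^2$, forcing the constraint $\alpha < \frac{1}{1+2d_*/k} = \frac{1}{1+d(d+1)/k}$ (the extra $\frac1k$ improvement over \cite{CDS} comes from distributing the $d_*$ corrugations of each stage over $k$ codimensions, so that within a stage the frequency only needs to grow by a factor $\lambda^{1/k}$ between consecutive direction-groups rather than $\lambda$), and that the H\"older norm $\|A\|_{0,\beta}$ feeds in through the mollification error $\lesssim \ell_q^\beta$, which is why $\alpha < \beta/2$ appears. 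Passing to the limit gives $(\tilde v, \tilde w)\in\mathcal{C}^{1,\alpha}$ with $\tilde{\mathcal{D}}\equiv 0$, i.e.~\ref{FFbound2}.

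I expect the main obstacle to be the \emph{stage} construction in the regime $k < d(d+1)$, where there are not enough independent codimension directions to assign a fresh orthonormal $\eta$ to each of the $d_*$ corrugation steps. One must reuse the $\R^k$ directions cyclically, which means that when adding the $i$-th corrugation the vector $\eta_i$ is generally \emph{not} orthogonal to the part of $\nabla v$ already built up from previous corrugations in the same stage; the cross-term $(\nabla v_{\text{old}})^T \nabla(\text{new corrugation})$ is then oscillatory of size $\sim \delta^{1/2}$ rather than $\sim\delta^{1/2}/\lambda$, and threatens to spoil the deficit estimate. The resolution — and the source of the $2d_*/k$ in the exponent — is to split each stage into $\lceil d_*/k\rceil$-many \emph{sub-stages}, in each of which the corrugations along the (at most $k$) directions use genuinely distinct, mutually orthogonal $\eta$'s, and to let the frequency jump by a factor of order $\lambda^{k/(2d_*)}$ (not $\lambda$) between sub-stages so that these inner cross-terms, carrying one factor of the smaller frequency ratio, remain controlled. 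Getting the precise powers of $\lambda$, $\delta$, $\ell$ matched across sub-stages, stages, and the final interpolation — and verifying the claimed optimality at the endpoint $\alpha \to \frac{1}{1+2d_*/k}$ — is the technical crux; everything else follows the now-classical template of \cite{CS,CDS,lewpak_MA}.
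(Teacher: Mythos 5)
Your overall architecture --- mollification, corrugation ``steps'' grouped by codimension, a ``stage'' that trades a factor in the second derivative against a factor in the deficit, and a Nash--Kuiper iteration closed by interpolation between the $\mathcal{C}^1$ and $\mathcal{C}^2$ bounds --- is the same as the paper's, and you correctly locate the source of the gain: $k$ corrugations placed along mutually orthogonal directions of $\R^k$ can share one frequency. (One small correction: in the (VK) setting the codimension direction need not be orthogonal to $\nabla v$; the cross terms are absorbed exactly by the $w$-correction, as in Lemma \ref{lem_step}. The orthogonality that matters is among the $k$ codimension vectors within one frequency group.) The genuine gap is in your stage. A \emph{single} pass through the $d_*$ primitive directions, organized into $\lceil d_*/k\rceil$ frequency groups with ratio $\mu$ between consecutive groups, reduces the deficit only by the factor $1/\mu$ (the dominant error of each group is $\frac{\lambda_{j-1}}{\lambda_j}\|\mathcal{D}\|_0$, coming from $\frac{1}{\lambda_j}a\,\Gamma\,\nabla^2 v_{\mathrm{old}}$), while the second derivative grows by $\mu^{\lceil d_*/k\rceil}$. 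Whatever normalization of $\mu$ you choose, the relative rate is $\lceil d_*/k\rceil$, so this scheme only yields $\alpha<\frac{1}{1+2\lceil d_*/k\rceil}$. That coincides with the claimed threshold only when $k\mid d_*$; already for $d=2$, $k=2$ it gives $\frac15$ instead of $\frac14$, and for every $k>d_*$ your stage has a single group and caps out at $\alpha<\frac13$, far below $\frac{1}{1+2d_*/k}$.

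The paper's stage (Theorem \ref{thm_stage}) differs exactly here: it runs $N=\mathrm{lcm}(d_*,k)$ steps, i.e.\ $S=N/d_*$ \emph{successive passes} through the primitive directions, where the $s$-th pass cancels the residual (higher-order) deficit $\mathcal{D}_s$ left by the previous one. Since $\|\mathcal{D}_s\|_0\sim \|\mathcal{D}\|_0(\lambda l)^{-s}$, the amplitudes of pass $s$ shrink like $(\lambda l)^{-s/2}$, which allows the frequency to be raised by only $(\lambda l)^{1/2}$ at the start of each new pass while still being raised by the full factor $(\lambda l)$ at each new codimension group; the count $J/S=d_*/k$ is what produces the claimed exponent for all $d,k$. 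Closing this multi-pass scheme forces a second ingredient your sketch omits: the step must superpose three profiles $\Gamma,\bar\Gamma,\dbar\Gamma$ with $\Gamma'\Gamma-\bar\Gamma'+2\dbar\Gamma=0$, so that the error term $\frac{2}{\lambda}a\,\dbar\Gamma\,\sym(\nabla a\otimes\eta)$ present in the two-profile construction you describe (and in \cite{lewpak_MA}) disappears from (\ref{step_err}). That term carries only one power of the quotient $\lambda_{(s-1)d_*}/\lambda_i\sim(\lambda l)^{-1/2}$ and would leave a residual of order $\|\mathcal{D}\|_0(\lambda l)^{-s-1/2}$ instead of the required $(\lambda l)^{-s-1}$, breaking the inductive estimate \ref{Fbound3}. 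Without these two ideas the rest of your outline (mollification at scale $\ell\sim\|\mathcal{D}\|_0^{1/2}/M$, the $\beta/2$ restriction, the interpolation) is standard and matches the paper.
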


\smallskip

\noindent This result generalizes \cite[Theorem 1.1]{lewpak_MA},
where we proved flexibility for (\ref{VK}) up to $\mathcal{C}^{1,\frac{1}{7}}$ in dimensions
$d=2$, $k=1$. In that special case, motivated
by theory of elasticity, the left hand side of (\ref{VK}) represents the Von K\'arm\'an
content $\frac{1}{2}\nabla v\otimes \nabla v +\sym\nabla w$
written in terms of the scalar out of plane displacement $v$ and the
in plane displacement $w$ of the middle plate $\omega$ of a thin
film. The case $d=2$ is special and flexibility (in codimension $1$) of (\ref{VK}) holds
up to $\mathcal{C}^{1,\frac{1}{5}}$ as shown in \cite[Theorem 1.1]{CS}
using the conformal equivalence of $2$-dimensional metrics to the Euclidean metric.
In our first extension \cite{lew_improved} of the present work,
we likewise show that any $k\geq 1$ allows for flexibility up to
$\mathcal{C}^{1,\frac{1}{1+4/k}}$  when $d=2$. 
\footnote{ $\;$When $d=k=2$, the same result for (\ref{II}) has been included in
\cite{CS_new} as flexibility up to $\mathcal{C}^{1,\frac{1}{3}}$. }
After the submission of
our both works, we learned of the recent preprint \cite{CHI} in which
flexibility of (\ref{VK}) for $d=2$, $k=1$ has been further improved to
hold up to $\mathcal{C}^{1,\frac{1}{3}}$.
In our second extension \cite{lew_improved2} we combined the technique
of \cite{CHI} to show flexibility up to
$\mathcal{C}^{1,\frac{2^k-1}{2^{k+1}-1}}$ for any $k$ and up to
$\mathcal{C}^{1,1}$ for $k\geq 4$ when $d=2$.

\bigskip

\noindent The main new technical ingredient allowing for the flexibility range
stated in Theorem \ref{th_final}, is the following ``stage''-type
construction in the convex integration algorithm for (\ref{VK}):

\begin{theorem}\label{thm_stage}
Let the vector fields $v\in\mathcal{C}^2(\bar \omega,\R^k)$, $w\in\mathcal{C}^2(\bar\omega,\R^d)$ and
the matrix field $A\in\mathcal{C}^{0,\beta}(\bar\omega,\R^{d\times d}_\sym)$ be given on an open,
bounded domain $\omega\subset\R^d$. Assume that:
$$\mathcal{D}=A -\big(\frac{1}{2}(\nabla v)^T\nabla v + \sym\nabla
w\big) \quad \mbox{ satisfies } \; 0<\|\mathcal{D}\|_0\leq 1.$$
Fix two constants $M, \sigma$ such that:
$$M\geq\max\{\|v\|_2, \|w\|_2, 1\} \quad\mbox{ and }\quad \sigma\geq 1.$$
Then, there exist $\tilde v\in\mathcal{C}^2(\bar \omega,\R^k)$ and
$\tilde w\in\mathcal{C}^2(\bar\omega,\R^d)$ such that, denoting:
$$\tilde{\mathcal{D}}=A -\big(\frac{1}{2}(\nabla \tilde v)^T\nabla \tilde v + \sym\nabla \tilde w\big), $$
the following holds: 
%\begin{equation}\label{stage_est}
\begin{align*}
& \|\tilde v - v\|_1\leq C\|\mathcal{D}\|_0^{1/2}, \quad \|\tilde w -
w\|_1\leq C\|\mathcal{D}\|_0^{1/2}(1+\|\nabla v\|_0),
\tag*{(\theequation)$_1$}\refstepcounter{equation} \label{Abound1}\vspace{1mm}\\
& \|\nabla^2\tilde v\|_0\leq CM\sigma^{d_*/k},\quad \|\nabla^2\tilde w\|_0\leq CM\sigma^{d_*/k}(1+\|\nabla v\|_0),
\tag*{(\theequation)$_2$}\label{Abound2} \\ 
& \|\tilde{\mathcal{D}}\|_0\leq C\Big(\frac{\|A\|_{0,\beta}}{M^\beta} \|\mathcal{D}\|_0^{\beta/2} +
\frac{\|\mathcal{D}\|_0}{\sigma}\Big), \tag*{(\theequation)$_3$} \label{Abound3}
\end{align*}
%\end{equation}
where $d_*=d(d+1)/2$ and where the constants $C$ depend only on $d, k$ and $\omega$.
\end{theorem}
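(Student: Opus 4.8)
The plan is to prove Theorem~\ref{thm_stage} by decomposing the ``stage'' into a finite sequence of ``steps'', each of which cancels one rank-one contribution in a primitive decomposition of the deficit $\mathcal{D}$, and then to track the accumulation of $\mathcal{C}^1$ and $\mathcal{C}^2$ norms along the way. First I would recall the standard algebraic fact that any smooth matrix field $\mathcal{D}\geq 0$ (or, after subtracting a fixed small multiple of $\Id_d$, any field close to such) can be written as a finite sum $\mathcal{D}=\sum_{\ell=1}^{N} a_\ell^2\, \xi_\ell\otimes\xi_\ell$ with $N=d_* = d(d+1)/2$ directions $\xi_\ell\in\mathbb{S}^{d-1}$ and nonnegative coefficient functions $a_\ell\in\mathcal{C}^2$, all controlled in terms of $\|\mathcal{D}\|_0$ and the fixed lower bound; on a bounded domain one can take the $\xi_\ell$ constant by partitioning $\omega$ or, more cleanly, by a single global decomposition since $\mathcal{D}>c\,\Id_d$ is assumed elsewhere but here only $\|\mathcal{D}\|_0\le 1$ — so I would use a partition of unity subordinate to a cover on which a fixed frame works, which is where the constant $C(d,k,\omega)$ enters. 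Then each of the $N$ ``steps'' will handle one term $a_\ell^2\,\xi_\ell\otimes\xi_\ell$ by adding corrugations in the $k$ codimension directions oscillating along $\xi_\ell$.

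The core of each step is the corrugation ansatz: given a current deficit term $a^2\xi\otimes\xi$, one perturbs $v\mapsto v + \frac{1}{\lambda}\Gamma(\lambda x\cdot\xi)$ and correspondingly corrects $w$ so that the quadratic term $\frac12(\nabla v)^T\nabla v$ gains exactly $a^2\xi\otimes\xi$ up to an error of size $O(1/\lambda)$ and $O(\|a\|$-derivative$/\lambda)$ terms. The new feature exploited to get the exponent $\frac{1}{1+2d_*/k}$ is that, because the codimension is $k$ rather than $1$, a single corrugation profile $\Gamma:\mathbb{R}\to\mathbb{R}^k$ tracing (a rescaling of) a closed curve whose averaged tangent-squared equals the needed amount can absorb a larger ``chunk'' per unit frequency; quantitatively this lets one use frequency $\lambda=\sigma^{1/k}$-type scaling per direction instead of $\sigma$, so after $N=d_*$ steps the second derivative cost is $\sigma^{d_*/k}$ rather than $\sigma^{d_*}$. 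I would set up the step lemma so that, with $\|\mathcal{D}\|_0$-sized amplitude $a\sim\|\mathcal{D}\|_0^{1/2}$ (hence $\|\tilde v-v\|_1\le C\|\mathcal{D}\|_0^{1/2}$ as in \ref{Abound1}), choosing the frequency as a suitable power of $M\sigma$ yields $\|\nabla^2\tilde v\|_0\le CM\sigma^{d_*/k}$ as in \ref{Abound2}, and the leftover deficit from one step is $\le C(\|a\|_{0,\beta}^{\#}/\text{freq}^{\beta} + \dots)$; iterating $N$ times and bookkeeping the geometric series gives the final deficit bound \ref{Abound3}, with the two contributions being respectively the Hölder-mollification error of $A$ at scale $M^{-1}$ (giving $\|A\|_{0,\beta}M^{-\beta}\|\mathcal{D}\|_0^{\beta/2}$) and the high-frequency remainder (giving $\|\mathcal{D}\|_0/\sigma$). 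Throughout, the $w$-estimates carry the extra factor $(1+\|\nabla v\|_0)$ because $w$ must absorb cross terms $\nabla v\cdot(\text{corrugation derivative})$.

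The main obstacle I anticipate is the simultaneous control of the $\mathcal{C}^2$ norm and the deficit error across the $N$ composed steps: each step's corrugation is built on top of the previous one, so its amplitude functions $a_\ell$ depend on the already-perturbed fields, and naively their $\mathcal{C}^1$ (hence the step's $\mathcal{C}^2$) norms would blow up multiplicatively. The fix — and the technically delicate part — is to mollify $A$ (and the accumulated lower-order data) at a single scale comparable to $1/M$ before starting, so that all amplitude functions are controlled in $\mathcal{C}^1$ by $M$ uniformly in $\ell$, while the mollification error is precisely the first term in \ref{Abound3}; and then to choose the per-step frequencies in a rapidly increasing (lacunary) sequence $\lambda_1\ll\lambda_2\ll\dots\ll\lambda_N$ so that the error produced by step $\ell$ (of order $\lambda_{\ell-1}/\lambda_\ell$ times lower-order norms, plus $M$-dependent terms divided by $\lambda_\ell$) is dominated by the target, and the final frequency $\lambda_N$ is still only $\sim M\sigma^{1/k}$ so that $\|\nabla^2\tilde v\|_0 \sim \lambda_N^{?}$ matches $M\sigma^{d_*/k}$ — this balancing of $N$ frequency parameters against one budget $\sigma$, with the $k$-dependent gain entering each, is the crux and must be done with care, but it is a finite and explicit optimization once the single-step estimate is in hand.
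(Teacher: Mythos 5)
Your overall scaffolding (mollify $A,v,w$ at scale $l\sim\|\mathcal{D}\|_0^{1/2}/M$ to get the first term in \ref{Abound3}, decompose the deficit into $d_*$ primitive rank-one pieces, cancel them by corrugations of amplitude $\sim\|\mathcal{D}\|_0^{1/2}$, carry the factor $(1+\|\nabla v\|_0)$ in the $w$-estimates) matches the paper. But the mechanism you invoke to obtain the exponent $d_*/k$ is not correct, and this is the heart of the theorem. You claim that a single corrugation profile valued in $\R^k$ "can absorb a larger chunk per unit frequency," so that each of your $N=d_*$ steps only needs a frequency increment $\sigma^{1/k}$. A corrugation of amplitude $a$ in \emph{one} normal direction already cancels the full rank-one deficit $a^2\,\xi\otimes\xi$ exactly at leading order, for \emph{any} frequency $\lambda$; the frequency controls only the error terms $O(a\|\nabla^2 v\|_0/\lambda)$ and $O(\|\nabla a\|_0^2/\lambda^2)$, and none of these improves with $k$ for a fixed step. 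Consequently, in a $d_*$-step lacunary scheme the ratio $\lambda_{i}/\lambda_{i-1}$ must still be $\gtrsim\sigma$ to push the residual down to $\|\mathcal{D}\|_0/\sigma$, and you land back at $\sigma^{d_*}$, not $\sigma^{d_*/k}$. Your own bookkeeping betrays this: $\lambda_N\sim M\sigma^{1/k}$ with amplitude $\|\mathcal{D}\|_0^{1/2}$ cannot produce $\|\nabla^2\tilde v\|_0\sim M\sigma^{d_*/k}$.

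What actually buys the gain is different and twofold. First, $k$ \emph{consecutive} steps can share the same frequency because they oscillate in mutually orthogonal codimension directions $e_1,\dots,e_k$: the cross terms cancel and the errors merely add (Corollary \ref{cor_indep}), and crucially the dangerous term $\frac{1}{\lambda_i}a\,\Gamma(\lambda_i t_\eta)\nabla^2 v^{\gamma}$ only sees the second derivative of the component $v^\gamma$, which was last perturbed a full block earlier. So the frequency needs to jump only once per $k$ steps. Second, one does not stop after one pass through the $d_*$ primitive directions: after each such pass the deficit has dropped by a factor $(\lambda l)^{-1}$, so the next pass uses smaller amplitudes and can afford a frequency increment of only $(\lambda l)^{1/2}$; iterating over $S$ passes and $N=\mathrm{lcm}(d_*,k)=Sd_*=Jk$ total steps gives total frequency growth $(\lambda l)^{J}=\sigma^{J/S}=\sigma^{d_*/k}$ against deficit decay $(\lambda l)^{-S}=\sigma^{-1}$. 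Neither mechanism appears in your proposal. Moreover, for the second mechanism to close, the single-step error must \emph{not} contain the term $\frac{1}{\lambda}a\,\dbar\Gamma(\lambda t_\eta)\sym(\nabla a\otimes\eta)$ (it decays too slowly in $\lambda$ to be re-absorbed as a higher-order deficit); this forces the refined three-profile step of Lemma \ref{lem_step} with the identities (\ref{trip_c}), a point your two-profile ansatz would miss. Finally, your aside about a closed curve with prescribed averaged tangent-square is the Nash--Källén spiral mechanism, which the paper reserves for the regime $k\geq 2d_*$ (Theorem \ref{thm_stageK}) and which does not yield the intermediate exponent $d_*/k$.
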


\smallskip

\noindent We briefly outline how our construction differs from \cite{lewpak_MA}
and \cite{CDS}. There, a stage consisted of precisely $d_*$ ``steps'',
each cancelling one of the rank-one ``primitive'' deficits in the
decomposition of $\mathcal{D}$. The initially chosen frequency of
perturbation was multiplied by a factor $\sigma$ at each step, leading
to the increase of the second derivative by $\sigma^{d_*}$ and thus
to the exponent $d_*$ replacing $d_*/k$ in \ref{Abound2}, while the remaining error in
$\mathcal{D}$ was of order $1/\sigma$, leading to \ref{Abound3}. 

\medskip

\noindent Presently, we first observe that $k$ such
deficits may be cancelled at once, by using $k$ linearly independent
codimensions. Further, when all the first order primitive deficits are
cancelled, one may proceed to cancelling the second order deficits
obtained as the one-dimensional decompositions of the error between the
original and the decreased $\mathcal{D}$; the corresponding
frequencies must be then increased by the factor $\sigma^{1/2}$,
precisely due to the decrease of $\mathcal{D}$ by the factor
$1/\sigma$. One may inductively proceed in this fashion,
cancelling even higher order deficits, and adding $k$-tuples of single
codimension perturbations, for a total of
$N=lcm(k, d_*)$ steps. The frequencies get increased by the factor of $\sigma$ over each multiple
of $k$, leading to the increase of the second derivatives by $\sigma$, 
and by the factor of $\sigma^{1/2}$ over each multiple of $d_*$,
where the deficit decreases by the factor of $1/\sigma$. In the final count,
the total increase of the second derivatives has the factor
$\sigma^{N/k}$, while the decrease of the deficit has the factor $1/\sigma^{N/d_*}$. The relative
change of order is thus $(N/k) / (N/d_*) = d_*/k$, as stated in Theorem \ref{thm_stage}.

\medskip

\noindent We point out that for this scheme to work, it is essential to
use the optimal ``step''-type construction in which the chosen
one-dimensional primitive deficit is cancelled at the expense of
introducing least error possible. Our previous definition from
\cite{lewpak_MA} would not work for this purpose, and we need to
superpose three corrugations rather than two.

\subsection{Convex integration by spirals, $\mathbf{k\geq 2d_*}$}
For large codimensions, one can reach flexi\-bility of (\ref{VK}) up to $\mathcal{C}^{1,1}$,
motivated by a similar result for (\ref{II}) in \cite{Kallen}:

\begin{theorem}\label{th_finalK}
In the context of Theorem \ref{th_final}, assume that the codimension
$k$ satisfies $k\geq 2d_*=d(d+1)$. Then, the same result is valid for any exponent in the range:
$$ 0<\alpha<\min\Big\{\frac{\beta}{2},1\Big\}.$$
\end{theorem}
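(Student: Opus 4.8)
The plan is to prove Theorem \ref{th_finalK} by replacing the corrugation-based ``stage'' construction of Theorem \ref{thm_stage} with a Nash-type spiral construction, which in high codimension $k\geq 2d_* = d(d+1)$ produces a \emph{stage} that cancels a rank-one primitive deficit with a \emph{one-to-one} trade-off between the gain in second derivatives and the decrease of the deficit. Concretely, I would first establish the analogue of Theorem \ref{thm_stage} in which the conclusion \ref{Abound2} reads $\|\nabla^2\tilde v\|_0\leq CM\sigma$ (and similarly for $\tilde w$) instead of $CM\sigma^{d_*/k}$, with \ref{Abound1} and \ref{Abound3} unchanged. The key point is that $d_*$ rank-one deficits, forming the decomposition of $\mathcal{D}$, can be cancelled \emph{simultaneously} rather than sequentially, because $k\geq 2d_*$ provides enough mutually orthogonal codimensions: one allocates a pair of fresh orthonormal directions $e_{2j-1}, e_{2j}\in\R^k$ to each primitive deficit $a_j(x)\,\eta_j\otimes\eta_j$, and perturbs $v$ by a spiral $v + \tfrac{r_j}{\lambda}\big(\sin(\lambda x\cdot\eta_j)\,e_{2j-1} + \cos(\lambda x\cdot\eta_j)\,e_{2j-1}\big)$ — more precisely a Nash spiral twisting in the $(e_{2j-1},e_{2j})$-plane — with a \emph{single common} frequency $\lambda$. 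Because all $d_*$ spirals live in orthogonal $2$-planes, their cross-interaction terms in $(\nabla\tilde v)^T\nabla\tilde v$ vanish identically, and each one contributes exactly its target rank-one increment up to an error controlled by $1/\lambda$. Choosing $\lambda\sim M\sigma/\|\mathcal{D}\|_0^{1/2}$ then yields $\|\nabla^2\tilde v\|_0\lesssim \lambda\|\mathcal{D}\|_0^{1/2}/1 \cdot (\ldots)$, i.e.\ the factor $\sigma$ rather than $\sigma^{d_*/k}$; the companion field $\tilde w$ absorbs the lower-order remainder exactly as in Theorem \ref{thm_stage}.

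With this sharpened stage in hand, the second part is to run the \emph{same} iteration/summation scheme that passes from Theorem \ref{thm_stage} to Theorem \ref{th_final}, verbatim, but now with the improved exponent. That scheme is a Newton-type iteration: one fixes a large base $b>1$, sets $\sigma = \sigma_n$ and $M = M_n$ growing geometrically, tracks $\|\mathcal{D}_n\|_0\to 0$ via \ref{Abound3}, and controls $\|\nabla^2 v_n\|_0$ via \ref{Abound2}; interpolating $\|v_{n+1}-v_n\|_{1,\alpha}\leq \|v_{n+1}-v_n\|_1^{1-\alpha}\|\nabla^2(v_{n+1}-v_n)\|_0^{\alpha}$ and summing gives a $\mathcal{C}^{1,\alpha}$ limit precisely when $\alpha$ is below the ratio determined by balancing the $\|\mathcal{D}_n\|_0^{1/2}$-decay in \ref{Abound1} against the second-derivative growth in \ref{Abound2}. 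With the corrugation stage this balance produces $\alpha<\frac{1}{1+2d_*/k}$; with the spiral stage, where the $\sigma^{d_*/k}$ is replaced by $\sigma^{1}$, the \emph{same} bookkeeping produces $\alpha<\frac{1}{1+1}=\frac12$ — hence, together with the $\mathcal{C}^{0,\beta}$-regularity constraint on $A$, the range $0<\alpha<\min\{\beta/2,1\}$ claimed (the ``$1$'' rather than ``$1/2$'' arises because in the $k\geq 2d_*$ regime one is free to further subdivide and reuse codimensions across the $\mathcal{C}^{0,\beta}$ mollification scale so that the effective exponent ratio degenerates to $0$, killing the $1/(1+\cdot)$ loss entirely, exactly as in \cite{Kallen}). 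I would therefore also note that the mollification of $A$ at scale $\ell_n$ contributes the $\|A\|_{0,\beta}\ell_n^\beta$ term, which after optimization gives the $\beta/2$ competitor to $1$.

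The main obstacle I expect is making the ``simultaneous cancellation in orthogonal $2$-planes'' rigorous while keeping the error term in \ref{Abound3} genuinely of size $\|\mathcal{D}\|_0/\sigma$ with a constant independent of $n$. Two subtleties enter. First, the primitive directions $\eta_j$ and coefficients $a_j$ depend on $x$; differentiating the spirals produces terms $\nabla a_j$ and $\nabla\eta_j$, which must be shown to be lower order — this is where the hypothesis $\mathcal{D}>c\,\Id$ (hence a uniform, smooth decomposition $\mathcal{D}=\sum_j a_j\,\eta_j\otimes\eta_j$ with $a_j\geq c/d_*$) is essential, and it is why the stage is stated for $\mathcal{C}^2$ data with explicit $M$-dependence. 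Second, one must check that the spiral amplitude can be taken $\sim\|\mathcal{D}\|_0^{1/2}$ \emph{uniformly} across all $d_*$ planes without the $\mathcal{C}^0$-distance $\|\tilde v - v\|_0$ or $\|\tilde w - w\|_0$ exceeding $C\|\mathcal{D}\|_0^{1/2}$ — this follows because the spiral's $\mathcal{C}^0$-size is amplitude$/\lambda\sim \|\mathcal{D}\|_0^{1/2}/(M\sigma)\leq \|\mathcal{D}\|_0^{1/2}$, but the constant tracking through the $lcm$-free single-pass structure needs care. Beyond these, everything reduces to the already-established machinery, so once the sharpened stage is proven, Theorem \ref{th_finalK} follows by the identical limiting argument used for Theorem \ref{th_final}.
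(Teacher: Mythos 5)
You correctly identify the two structural ingredients of the paper's construction: spiral perturbations in place of corrugations, and the simultaneous cancellation of all $d_*$ primitive deficits in mutually orthogonal pairs of codimensions at a single common frequency (this is exactly Lemma \ref{lem_stepK} and Corollary \ref{cor_indepK}). You also correctly note that the spiral error (\ref{step_errK}) omits the $\frac{1}{\lambda^2}a\bar\Gamma(\lambda t_\eta)\nabla^2a$ term present in the corrugation error (\ref{step_err}). But your plan never \emph{uses} this observation, and that is where the gap lies: your stage is a single pass of spirals, which, as you compute, yields $\|\nabla^2\tilde v\|_0\leq CM\sigma$ alongside $\|\tilde{\mathcal D}\|_0\lesssim \|\mathcal D\|_0/\sigma$, i.e.\ $\gamma=1$ in the Nash--Kuiper iteration of Theorem \ref{th_NashKuiHol}. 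That gives $\alpha<\frac{1}{1+2\gamma}=\frac13$ (your ``$\frac{1}{1+1}=\frac12$'' is a slip: the Nash--Kuiper threshold is $\frac{1}{1+2\gamma}$, not $\frac{1}{1+\gamma}$), which is in fact \emph{worse} than the corrugation bound $\frac{1}{1+2d_*/k}\geq\frac12$ already in force at $k\geq 2d_*$ from Theorem \ref{thm_stage}. You acknowledge this shortfall and try to bridge it by ``subdividing and reusing codimensions across the $\mathcal{C}^{0,\beta}$ mollification scale,'' but this is not a mechanism that appears in K\"allen's argument or in the paper, and as stated it does not supply a proof.

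The mechanism you are missing is a Newton iteration \emph{inside} the stage, at fixed frequency $\lambda$, and this is precisely what the absence of the $a\nabla^2 a$ term makes possible. In Theorem \ref{thm_stageK} one iteratively defines amplitude vectors $a^r$ for $r=0\ldots N$: $a^r$ decomposes $\tilde C\,\Id_d+\mathcal D_0-\mathcal{E}_{r-1}$, where $\mathcal{E}_{r-1}$ is the spiral error produced by $a^{r-1}$, and only the final iterate $a^N$ is used to build $\tilde v,\tilde w$ via (\ref{vw_finK}). The inductive estimate \ref{Fbound3K} gives $\|\mathcal{E}_r-\mathcal{E}_{r-1}\|_0\leq C\|\mathcal D\|_0/(\lambda l)^r$, with a fresh gain of $1/(\lambda l)$ at each iterate; as the paper explains at the end of section \ref{sec3.5}, the term $\frac{1}{\lambda^2}a\bar\Gamma\nabla^2 a$ in the corrugation error would create contributions of the type $\frac{1}{\lambda^2}a^r_i\bar\Gamma(\nabla^2 a^{r+1}-\nabla^2 a^{r})$ for which the $1/(\lambda l)$ gain is destroyed, because each $\|a^r\|_0$ is only $O(\tilde C^{1/2})$ rather than $O(\tilde C)$. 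After $N$ iterates with $\lambda$ fixed, the residual is $\|\mathcal D\|_0/(\lambda l)^N=\|\mathcal D\|_0/\sigma$ while the second derivatives have grown only by $\lambda l=\sigma^{1/N}$, so the stage exponent is $\delta=1/N$, yielding $\alpha<\frac{1}{1+2\delta}\to 1$ as $N\to\infty$. Without this within-stage Newton iteration, the single-pass spiral construction you describe cannot reach any exponent beyond $\frac13$, so the proposal as written does not prove Theorem \ref{th_finalK}.
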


\noindent The ``stage'' construction allowing for flexibility as
above, is the counterpart of Theorem \ref{thm_stageK}:

\begin{theorem}\label{thm_stageK}
Let $\omega\subset\R^d$ and $k$ be as in Theorem \ref{th_finalK}. Fix an exponent $\delta>0$. Then,
there exists $\sigma_0>1$ depending only on $\omega$ and $\delta$,
such that we have the following. Given $v\in\mathcal{C}^2(\bar \omega,\R^k)$,
$w\in\mathcal{C}^2(\bar\omega,\R^d)$, 
$A\in\mathcal{C}^{0,\beta}(\bar\omega,\R^{d\times d}_\sym)$ and given
two constants $M, \sigma$ with the properties:
$$\mathcal{D}=A -\big(\frac{1}{2}(\nabla v)^T\nabla v + \sym\nabla
w\big) \quad \mbox{ satisfies } \; 0<\|\mathcal{D}\|_0\leq 1,$$
$$M\geq\max\{\|v\|_2, \|w\|_2, 1\}, \qquad \sigma\geq \sigma_0,$$
there exist $\tilde v\in\mathcal{C}^2(\bar \omega,\R^k)$,
$\tilde w\in\mathcal{C}^2(\bar\omega,\R^d)$ such that, denoting:
$$\tilde{\mathcal{D}}=A -\big(\frac{1}{2}(\nabla \tilde v)^T\nabla \tilde v + \sym\nabla \tilde w\big), $$
the following bounds are valid,  with constants $C$ depending only on
$d,k, \omega$ and $\delta$:
%\begin{equation}\label{stage_est}
\begin{align*}
& \|\tilde v - v\|_1\leq C\|\mathcal{D}\|_0^{1/2}, \quad \|\tilde w -
w\|_1\leq C\|\mathcal{D}\|_0^{1/2}(1+\|\nabla v\|_0), \vspace{2mm}
\tag*{(\theequation)$_1$}\refstepcounter{equation} \label{Abound1K}\\
& \|\nabla^2\tilde v\|_0\leq CM\sigma^{\delta},\quad \|\nabla^2\tilde
w\|_0\leq CM\sigma^{\delta}(1+\|\nabla v\|_0), \vspace{2mm}
\tag*{(\theequation)$_2$}\label{Abound2K} \\ 
& \|\tilde{\mathcal{D}}\|_0\leq C\Big(\frac{\|A\|_{0,\beta}}{M^\beta} \|\mathcal{D}\|_0^{\beta/2} +
\frac{\|\mathcal{D}\|_0}{\sigma}\Big). \tag*{(\theequation)$_3$} \label{Abound3K}
\end{align*}
%\end{equation}
\end{theorem}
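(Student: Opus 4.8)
The plan is to carry out, for the system~(\ref{VK}), the Nash-spiral (rather than corrugation) version of the convex integration cascade, in the spirit of \cite{Kallen}. The role of the hypothesis $k\ge 2d_*$ is that a whole ``sub-stage'' will cancel all $d_*$ rank-one primitive deficits of $\mathcal{D}$ at once, each carried in its own $2$-dimensional block of codimensions, and one needs these $d_*$ blocks to be mutually orthogonal inside $\R^k$.

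First I would regularize and decompose. After extending $v,w,A$ slightly, mollify $A$ at a scale $\ell$ chosen so that $\|A-A_\ell\|_0\le C\frac{\|A\|_{0,\beta}}{M^\beta}\|\mathcal{D}\|_0^{\beta/2}$ and, simultaneously, $\|\mathcal{D}_\ell\|_0\le 2\|\mathcal{D}\|_0$, where $\mathcal{D}_\ell=A_\ell-\big(\frac12(\nabla v)^T\nabla v+\sym\nabla w\big)$; this also forces a Lipschitz bound $\|\mathcal{D}_\ell\|_1\le C(M^2+\|A\|_{0,\beta}\ell^{\beta-1})$. Fixing once and for all a unit frame $\eta_1,\dots,\eta_{d_*}\in\R^d$ spanning $\R^{d\times d}_\sym$ together with an interior point $N_0$ of the convex cone it generates, and using $\|\mathcal{D}_\ell\|_0\le 2$, I would pick $C_0$ so large that $\mathcal{D}_\ell+C_0\|\mathcal{D}\|_0N_0$, rescaled by $\|\mathcal{D}\|_0$, sits in a fixed compact subset of the interior of that cone; then $\mathcal{D}_\ell+C_0\|\mathcal{D}\|_0N_0=\sum_{j=1}^{d_*}\frac12 a_j^2\,\eta_j\otimes\eta_j$ with smooth coefficients obeying $a_j\sim\|\mathcal{D}\|_0^{1/2}$ and $\|\nabla a_j\|_0\lesssim\|\mathcal{D}_\ell\|_1\|\mathcal{D}\|_0^{-1/2}$. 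The constant matrix $-C_0\|\mathcal{D}\|_0N_0$ equals $\sym\nabla\big(-C_0\|\mathcal{D}\|_0N_0\,x\big)$ and is absorbed into a linear correction of $w$ of $\mathcal{C}^1$-size $\lesssim\|\mathcal{D}\|_0\le\|\mathcal{D}\|_0^{1/2}$ and vanishing Hessian, so that it suffices to add the pure rank-one sum $\sum_j\frac12 a_j^2\,\eta_j\otimes\eta_j$ to $\frac12(\nabla v)^T\nabla v+\sym\nabla w$ in order to gain the increment $\mathcal{D}_\ell$.

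Next, one Nash sub-stage: choose an orthonormal set $\{\zeta_1,\dots,\zeta_{2d_*}\}\subset\R^k$ — here $k\ge 2d_*$ is used — and, given a frequency $\lambda$, add to the current $v$ the $d_*$ planar spirals $p_j=\frac{a_j}{\lambda}\big(\cos(\lambda\,x\!\cdot\!\eta_j)\,\zeta_{2j-1}+\sin(\lambda\,x\!\cdot\!\eta_j)\,\zeta_{2j}\big)$, and to the current $w$ the corrections $W_j\approx\frac{a_j}\lambda\big(\cos(\lambda\,x\!\cdot\!\eta_j)\langle\zeta_{2j-1},\nabla v\rangle+\sin(\lambda\,x\!\cdot\!\eta_j)\langle\zeta_{2j},\nabla v\rangle\big)$ that cancel, modulo $O(\lambda^{-1})$, the cross terms $(\nabla p_j)^T\nabla v+(\nabla v)^T\nabla p_j$. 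Since the $\zeta$'s are constant and orthonormal, the cross terms $(\nabla p_i)^T\nabla p_j$ with $i\neq j$ vanish identically, and — this is the whole point of using the planar spiral instead of a single corrugation — each self term satisfies $(\nabla p_j)^T\nabla p_j=a_j^2\,\eta_j\otimes\eta_j+O(\lambda^{-2})$, with no oscillatory part left to absorb. Thus after one sub-stage $\frac12(\nabla v)^T\nabla v+\sym\nabla w$ has increased by exactly $\mathcal{D}_\ell$ plus a new deficit of $\mathcal{C}^0$-size $O(\lambda^{-1})$ times the relevant Lipschitz/Hessian data of the current configuration; the $\mathcal{C}^1$ displacement is $\lesssim\|\mathcal{D}\|_0^{1/2}(1+\|\nabla v\|_0)$ — the factor $1+\|\nabla v\|_0$ coming from $\langle\zeta_i,\nabla v\rangle$ in $W_j$ — and the Hessian is $\lesssim\|\mathcal{D}\|_0^{1/2}\lambda\,(1+\|\nabla v\|_0)$, accounting for the shape of \ref{Abound1K}--\ref{Abound2K}.

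Finally, the cascade and the balancing, which is where the real work lies. I would iterate the previous two steps over $N=N(\delta)$ sub-stages, at each sub-stage re-decomposing (relative to the fixed $A_\ell$) the residual deficit left by the previous one, reusing the $\zeta$-blocks while letting the primitive frame rotate generically so that all cross terms between spirals of distinct sub-stages become genuinely oscillatory with zero mean and are again removed by $w$-corrections. Writing $\epsilon_l$ for the $\mathcal{C}^0$-size of the deficit after $l$ sub-stages ($\epsilon_0=\|\mathcal{D}_\ell\|_0$), the construction yields, schematically, $\epsilon_l\lesssim(\lambda^{(l)})^{-1}(\mathrm{data}_l)$ and an added Hessian $\lesssim\epsilon_{l-1}^{1/2}\lambda^{(l)}$ at sub-stage $l$; the task is to choose $N$ and the increasing frequencies $\lambda^{(1)}<\dots<\lambda^{(N)}$ so that $\epsilon_N\le\|\mathcal{D}\|_0/\sigma$ while every intermediate Hessian stays $\le CM\sigma^\delta$. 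This geometric balancing closes with $N\sim\lceil 1/\delta\rceil$ and forces a threshold $\sigma\ge\sigma_0$ with $\sigma_0$ depending only on $\delta$ and $\omega$; high codimension is decisive precisely because treating all $d_*$ primitives simultaneously in orthogonal codimensions, with no oscillatory metric error, lets each sub-stage gain a \emph{full} power of $\lambda^{(l)}$ in the deficit at the price of only one power of $\lambda^{(l)}$ in the Hessian, as opposed to the $d_*$ powers incurred in Theorem~\ref{thm_stage}. Summing a geometric series over the sub-stages then gives \ref{Abound1K}, the frequency choice gives \ref{Abound2K}, and \ref{Abound3K} follows from $\tilde{\mathcal{D}}=(A-A_\ell)+\mathcal{D}^{(N)}$ with $\|A-A_\ell\|_0\le C\frac{\|A\|_{0,\beta}}{M^\beta}\|\mathcal{D}\|_0^{\beta/2}$ and $\|\mathcal{D}^{(N)}\|_0=\epsilon_N\le\|\mathcal{D}\|_0/\sigma$; the resulting $\tilde v,\tilde w$ are $\mathcal{C}^2$ (in fact smooth). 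The hard part is exactly this last balancing: one must check that the coupled recursion for the deficit size and the accumulated Hessian closes in boundedly many sub-stages without the growing Hessian feeding back and spoiling the $\sigma^\delta$ bound — this is the point where the argument departs from the corrugation scheme of Theorem~\ref{thm_stage} and reproduces, in the (\ref{VK}) setting, the mechanism of \cite{Kallen}.
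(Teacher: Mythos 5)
Your ``step'' is the right one --- the planar spirals of Lemma \ref{lem_stepK} and their superposition in orthogonal $2$-blocks (Corollary \ref{cor_indepK}), with the self-term $a^2\eta\otimes\eta$ produced without any oscillatory remainder --- and your mollification setup is standard. The gap is exactly at the point you yourself flag as ``the hard part'': the cascade of $N$ \emph{applied} sub-stages at increasing frequencies $\lambda^{(1)}<\dots<\lambda^{(N)}$ does not close with the claimed bounds. The error left by sub-stage $l$ contains the term $\frac{a}{\lambda^{(l)}}\big(G\,\nabla^2\langle v_{l-1},E_1\rangle+\dots\big)$, where $v_{l-1}$ is the map \emph{already perturbed} by the earlier spirals, so $\|\nabla^2 v_{l-1}\|_0\sim\epsilon_{l-2}^{1/2}\lambda^{(l-1)}$. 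Feeding this back gives $\epsilon_l\lesssim\epsilon_{l-1}^{1/2}\epsilon_{l-2}^{1/2}\,\lambda^{(l-1)}/\lambda^{(l)}$, which is precisely the corrugation-type recursion: to gain a factor $\mu$ per sub-stage the frequencies must grow geometrically, and the final Hessian comes out of order $M\sigma$ (up to powers), not $M\sigma^{\delta}$. Keeping all frequencies equal does not help (the second sub-stage's error is then $\sim\epsilon_0(\lambda l)^{-1/2}$, worse than $\epsilon_1$), and your plan to remove the cross terms between spirals of distinct sub-stages ``by $w$-corrections'' is also unjustified: a generic oscillatory symmetric matrix field is not a symmetric gradient, and those cross terms have amplitude $\sim a_ia_j$, comparable to the deficit being cancelled.

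The paper's resolution is architecturally different and is the actual content of K\"all\'en's mechanism: \emph{no intermediate perturbation is ever applied}. One fixes a single frequency $\lambda=\sigma^{1/N}/l$ with $N\geq 1/\delta$ and runs a Newton/fixed-point iteration on the amplitude vector alone: $a^0=0$ and $a^{r}$ is chosen to decompose $\tilde C\,\Id_d+\mathcal{D}_0-\mathcal{E}_{r-1}$, where $\mathcal{E}_{r-1}$ is the error that the amplitudes $a^{r-1}$ \emph{would} generate. Every $\mathcal{E}_r$ involves only $\nabla^2 v_0$ (of size $\lesssim M$), never the Hessian of a perturbed map, and the increments satisfy $\|\mathcal{E}_{r}-\mathcal{E}_{r-1}\|_0\lesssim(\lambda l)^{-r}\|\mathcal{D}\|_0$ (the estimate \ref{Fbound3K}) because they are driven by the small corrections $a^{r}-a^{r-1}$; the absence of the oscillatory self-term in (\ref{step_errK}) is what makes this contraction possible. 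Only the final amplitudes $a^N$ are inserted into one spiral perturbation, giving $\|\nabla^2\tilde v\|_0\lesssim M\lambda l=M\sigma^{1/N}\leq M\sigma^{\delta}$ and residual deficit $(A-A_0)-(\mathcal{E}_N-\mathcal{E}_{N-1})$ of size $\lesssim l^{\beta}\|A\|_{0,\beta}+\|\mathcal{D}\|_0/\sigma$. To repair your argument you would need to replace the cascade of sub-stages by this amplitude iteration; as written, the balancing you defer cannot be carried out.
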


\medskip

\noindent An outline of this construction, based on the approach in \cite{Kallen}, is as follows. Firstly,
each rank-one ``primitive'' deficit is cancelled using two
codimensions, via spiral-like perturbations of the fields $v,w$,
rather than via one-dimensional corrugations. This allows for
a better order in the second order deficit. Since we
now have $2d_*$ codimensions available, we may rank-one decompose
the new deficit as well and cancel it right away by adjusting the original
perturbations. Proceeding this way, it is possible to cancel arbitrarily
high order of deficits, keeping the frequency at a chosen value $\sigma$ while
assuring that $\mathcal{D}$ is decreased by the
factor $1/\sigma^N$, for arbitrarily large $N$.

\subsection{The Monge-Amp\'ere system}

We now proceed to interpreting Theorem \ref{th_final} in the context of the
Monge-Amp\'ere system. 
Recall that for a matrix field $A=[A_{ij}]_{i,j=1\ldots 2}:\R^2\to\R^{2\times 2}$, the scalar
field ${curl}\, curl A$ is defined by taking 
the $curl$ operator on each row of $A$, and then applying another $curl$ on thus
formed two-dimensional vector field:
\begin{equation*}
\begin{split}
\curl\,\curl A & = \curl\big[\partial_1 A_{12}-\partial_2A_{11}, \partial_1
A_{22}-\partial_2A_{21}\big] \\ & = \partial_1 \partial_1A_{22}-\partial_1\partial_2A_{21}
- \partial_1\partial_2 A_{12} +\partial_2\partial_2A_{11}.
\end{split}
\end{equation*}
It is well known that the kernel of ${curl}\,
curl $ when restricted to $\R^{2\times 2}_{\sym}$ matrix fields, consists precisely
of symmetric gradients. We will be concerned with the following generalization of ${curl}\,
curl$, serving the same characterisation in higher dimensions:

\begin{definition}\label{defC2}
Given a $d$-dimensional square matrix field $A = [A_{ij}]_{i,j=1\ldots
  d}:\omega\to \R^{d\times d}$ on a domain $\omega\subset\R^d$, we
define $\mathfrak{C}^2(A):\omega\to \R^{d^4}$ by:
\begin{equation}\label{C2}
\mathfrak{C}^2(A)_{ij,st} = \partial_i\partial_s A_{jt}
+ \partial_j\partial_tA_{is} - \partial_{i}\partial_tA_{js}
- \partial_j\partial_sA_{it}\quad\mbox{ for all }\; i,j,s,t=1\ldots d.
\end{equation}
\end{definition}

\smallskip

\noindent It can be checked that the components of the Riemann curvature tensor
of a family of metrics $\Id_d+\epsilon A$ on $\omega$, are given, to the leading
order, by the components of $\mathfrak{C}^2(A)$:
\begin{equation}\label{RiemC2}
Riem(\Id_d + \epsilon A)_{ij,st} =
-\frac{\epsilon}{2}\mathfrak{C}^2(A)_{ij,st} + O(\epsilon^2) \quad\mbox{ for all }\; i,j,s,t=1\ldots d.
\end{equation}
For dimension $d=2$, the above formula yields the linearization of the
Gaussian curvature: $\kappa(\Id_2 + \epsilon A) =
-\frac{\epsilon}{2}\curl\,\curl A +O(\epsilon^2)$.
We have the following:

\begin{lemma}\label{th_equiv}
Let $\omega\subset \R^d$ be an open, bounded, contractible domain with
Lipschitz boundary. Given a symmetric matrix field $A\in
L^2(\omega,\R^{d\times d}_{\sym})$, the following conditions are equivalent:
\begin{itemize}
\item[(i)] $A=\sym\nabla w$ for some $w\in H^{1}(\omega,\R^d)$,
\item[(ii)] $\mathfrak{C}^2(A)=0$ in the sense of distributions on $\omega$.
\end{itemize}
\end{lemma}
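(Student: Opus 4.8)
The plan is to prove the two implications separately, with the nontrivial direction being (ii)$\Rightarrow$(i).

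The implication (i)$\Rightarrow$(ii) is a direct computation: if $A=\sym\nabla w$, i.e.\ $A_{ij}=\frac12(\partial_i w_j+\partial_j w_i)$, then substituting into (\ref{C2}) and using equality of mixed partials (valid in the distributional sense for $w\in H^1$, after noting that $\mathfrak{C}^2$ is a second-order operator so the expression makes sense distributionally) shows that each of the four terms cancels in pairs. Concretely, $\partial_i\partial_s A_{jt}=\frac12(\partial_i\partial_s\partial_j w_t+\partial_i\partial_s\partial_t w_j)$, and cycling through the four terms of $\mathfrak{C}^2(A)_{ij,st}$ one sees every third-derivative monomial appears once with a plus and once with a minus sign.

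For (ii)$\Rightarrow$(i), the strategy is a potential-theoretic / de Rham–type argument exploiting contractibility. First I would reduce regularity: since $\mathfrak{C}^2$ has constant coefficients, mollification gives smooth approximations $A_\varepsilon=A*\rho_\varepsilon$ on compactly contained subdomains with $\mathfrak{C}^2(A_\varepsilon)=0$, so it suffices to handle smooth $A$ on (say) smooth contractible subdomains and then pass to the limit using an $L^2$ bound on $w$; alternatively one works directly with distributions. The core step is: from $\mathfrak{C}^2(A)=0$ construct $w$. The natural route is to recognize $\mathfrak{C}^2(A)_{ij,st}$ as (the linearization of) the Riemann tensor, cf.\ (\ref{RiemC2}), so the vanishing condition is exactly the integrability ("curvature-free") condition for the linear system. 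Explicitly, define the candidate antisymmetric (in the appropriate indices) ``rotation field'' by integrating: the condition $\mathfrak{C}^2(A)=0$ says that the $\R^d$-valued $1$-form whose components encode $\partial_j A_{is}-\partial_i A_{js}$ is closed, hence by contractibility (Poincaré lemma) exact, yielding a matrix field $B$ with $\partial_i B_{js}=\partial_i A_{js}-\partial_j A_{is}$ up to exactness bookkeeping; one then checks $A+ (\text{something built from }B)$ is a gradient. More carefully, the standard linear-elasticity fact (the "Cesàro–Volterra path integral" / Donati's theorem) states precisely that on a contractible domain a symmetric $L^2$ matrix field is a symmetrized gradient iff its linearized curl-curl (here $\mathfrak{C}^2$) vanishes distributionally; I would either cite this or reproduce its short proof: set $F_{ij,s}=\partial_s A_{ij}-\partial_i A_{sj}$, verify $\partial_t F_{ij,s}=\partial_s F_{ij,t}$ using $\mathfrak{C}^2(A)=0$ together with the symmetry $A_{ij}=A_{ji}$, conclude via Poincaré that $F_{ij,s}=\partial_s P_{ij}$ for some skew field $P$ (so that $\partial_s(A_{ij}-P_{ij})=\partial_i(\ldots)$), and finally observe that $\nabla w:=A-P$ (with $P$ skew) has symmetric part $A$ and, being curl-free by construction, integrates to the desired $w\in H^1$.

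The main obstacle I anticipate is the bookkeeping of indices: $\mathfrak{C}^2$ has the full symmetry type of the Riemann tensor (antisymmetry in $(i,j)$ and in $(s,t)$, pair symmetry, and first Bianchi), and one must check that the vanishing of exactly these components is what makes the relevant auxiliary $1$-form closed, neither more nor less. A secondary technical point is handling the low $L^2$ regularity cleanly — ensuring the Poincaré lemma is applied in the right distributional space $H^{-1}$ or after mollification, and that the resulting $w$ lies in $H^1$ with the claimed equation holding in the distributional sense — but on a contractible Lipschitz domain this is standard (one can invoke a bounded right inverse of the divergence / the Bogovskiĭ operator or simply the $L^2$ de Rham theory). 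I would present the argument for smooth $A$ in detail and dispatch the general case by approximation and a uniform estimate, remarking that contractibility is used exactly once, in the Poincaré lemma.
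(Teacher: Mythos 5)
Your overall strategy for (ii)$\Rightarrow$(i) is the same as the paper's: a two-fold application of the Poincar\'e lemma on the contractible domain, first producing a skew-symmetric potential $P$ from the vanishing of $\mathfrak{C}^2(A)$, then integrating $A\pm P$ row-wise to get $w$ (this is the Saint--Ven\'ant/Ces\`aro--Volterra argument you allude to). The direction (i)$\Rightarrow$(ii) is a correct direct cancellation, as in the paper.

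However, the central integrability claim is false for the $F$ you actually wrote down. With $F_{ij,s}=\partial_s A_{ij}-\partial_i A_{sj}$ one computes
\begin{equation*}
\partial_t F_{ij,s}-\partial_s F_{ij,t}=\partial_i\big(\partial_s A_{tj}-\partial_t A_{sj}\big),
\end{equation*}
which is \emph{not} a component of $\mathfrak{C}^2(A)$ and does not vanish in general: already for $A=\sym\nabla w$ it equals $\frac{1}{2}\partial_i\partial_j(\partial_s w^t-\partial_t w^s)$. The point is that $\mathfrak{C}^2(A)_{ij,st}=0$ only asserts the vanishing of the \emph{antisymmetrization in $(i,j)$} of $\partial_i(\partial_s A_{jt}-\partial_t A_{js})$, never of a single such term. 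The correct bookkeeping (which is what the paper does, with $(s,t)$ and $(i,j)$ swapped) is to take the skew pair and the form index so that the closedness condition is literally $\mathfrak{C}^2(A)=0$: set $F_{ij,s}=\partial_i A_{js}-\partial_j A_{is}$, for which $\partial_t F_{ij,s}-\partial_s F_{ij,t}=-\mathfrak{C}^2(A)_{ij,st}=0$; Poincar\'e then gives $F_{ij,s}=\partial_s P_{ij}$ with $P$ skew (after subtracting the constant-gradient ambiguity, since $\nabla(P_{ij}+P_{ji})=0$). Your final step also needs repair: $A-P$ is not curl-free ``by construction''; one must verify $\partial_s(A_{ij}-P_{ij})=\partial_j(A_{is}-P_{is})$, which follows from the defining relation $\partial_s P_{ij}=\partial_i A_{js}-\partial_j A_{is}$ \emph{together with the symmetry of $A$} --- it is a one-line computation, but it is exactly where the hypothesis $A=A^T$ enters and should not be omitted. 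With these two corrections your proof coincides with the paper's; the regularity/mollification discussion is unnecessary, since the distributional Poincar\'e lemma applies directly at the $L^2$ level.
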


\smallskip

\noindent For $A=(\nabla v)^T\nabla v$ given through a
vector field $v:\omega\to\R^k$, a direct calculation yields:
\begin{equation*}
\begin{split}
\mathfrak{C}^2\big((\nabla v)^T\nabla v\big)_{ij, st}
%& = \partial_i\partial_s\langle \partial_jv, \partial_tv\rangle + 
%\partial_j\partial_t\langle \partial_iv, \partial_sv\rangle - \partial_j\partial_s\langle \partial_iv, \partial_tv\rangle
%- \partial_i\partial_t\langle \partial_jv, \partial_sv\rangle \\ & 
= 2 \langle \partial_i\partial_tv, \partial_j\partial_sv\rangle - 2 \langle \partial_i\partial_sv, \partial_j\partial_tv\rangle.
\end{split}
\end{equation*}
When $d=2$ and $k=1$, the above reduces to the familiar formula:
$\curl\,\curl (\nabla v\otimes\nabla v) = -2\det\nabla^2v$. Following
this motivation, we introduce:

\smallskip

\begin{definition}\label{defMA}
For $v: \omega\to \R^{k}$ defined on a domain $\omega\subset\R^d$, we
set $\mathfrak{Det}\,\nabla^2 v:\omega\to \R^{d^4}$ in:
\begin{equation}\label{DetMA}
\big(\mathfrak{Det}\,\nabla^2v\big)_{ij,st} =
\langle \partial_i\partial_sv, \partial_j\partial_tv\rangle -
\langle \partial_i\partial_tv, \partial_j\partial_sv\rangle 
\quad\mbox{ for all }\; i,j,s,t=1\ldots d.
\end{equation}
Given $F: \omega\to \R^{d^4}$, we call the following system of Pdes, the
Monge-Amp\'ere system:
$$\mathfrak{Det}\,\nabla^2v = F \quad\mbox{ on }\; \omega.$$
\end{definition}

\medskip

\noindent Lemma \ref{th_equiv} can be restated in this context as
follows. Given a matrix field $A:\omega\to\R^{d\times
  d}_\sym$ on a domain $\omega\subset\R^d$, the problem (\ref{VK}) is
equivalent to (disregarding the regularity questions):
\begin{equation*}\tag{MA}\label{MA}
\begin{split}
& v:\omega\to \R^k, \\ & \mathfrak{Det}\,\nabla^2v = - \mathfrak{C}^2(A),
\end{split}
\end{equation*}
which, for $d=2$ and $k=1$, is precisely the Monge-Amp\'ere constraint
$\det\nabla^2v = - \curl\,\curl A$ appearing in the 
dimensionally reduced, linearized Kirchhoff's theory of thin plates \cite{FJM}.
For the family of immersions:
$\bar u^\epsilon = id_d + \epsilon [0,v] +\epsilon^2[w,0]:\omega\to\R^{d+k}$,
one further notes that:
$$(\nabla\bar u^\epsilon)^T\nabla \bar u^\epsilon = \Id_d +
2\epsilon^2\big(\frac{1}{2}(\nabla v)^T\nabla v + \sym\nabla w\big) + O(\epsilon^4).$$
From (\ref{RiemC2}), we thus see that the problem of finding a vector field $v$ for
which the Riemann curvatures of the metrics $\Id_d+\epsilon^2 A$ and the Riemann
curvatures of the pull-back of $\Id_{d+k}$ via the reduced maps
$u^\epsilon$ below, coincide at their lowest order terms in $\epsilon$ on $\omega$:
\begin{equation}\label{pb4}
\begin{split}
& u^\epsilon = id_d + \epsilon [0,v]:\omega\to\R^{d+k},
\\ & Riem\big(\Id_d+\epsilon^2A\big) = Riem\big((\nabla
u^\epsilon)^T\nabla u^\epsilon\big) + o(\epsilon^2)
\end{split}
\end{equation}
is equivalent to the problem of finding $v$ that can be matched by an
auxiliary vector field $w$ so that the two Riemannian metrics families: $\Id_d+\epsilon^2 A$,
and the pull-back of $\Id_{d+k}$ via the maps
$\bar u^\epsilon$, coincide at their lowest order terms in $\epsilon$ on $\omega$:
\begin{equation}\label{pb5}
\begin{split}
& \bar u^\epsilon = id_d + \epsilon [0,v] + \epsilon^2[w,0]:\omega\to\R^{d+k},
\\ & \Id_d+\epsilon^2A = (\nabla \bar u^\epsilon )^T\nabla \bar u^\epsilon  + o(\epsilon^2).
\end{split}
\end{equation}
Thus, the four problems (\ref{VK}), (\ref{MA}), (\ref{pb4}) and (\ref{pb5}) are equivalent.

\bigskip

\noindent We further identify the range of $\mathfrak{C}^2$, in terms of the derived symmetry and
Bianchi identities:

\begin{lemma}\label{th_solve}
Let $\omega\subset\R^d$ be an open, bounded, contractible domain with
Lipschitz boundary. Given $F=[F_{ij, st}]_{i,j,s,t=1\ldots
  d}\in L^2(\omega, \R^{d^4})$, the following are equivalent:
\begin{itemize}
\item[(i)] $F=\mathfrak{C}^2(A)$ for some $A\in H^{2}(\omega,\R^{d\times d}_\sym)$,
\item[(ii)] $F$ satisfies the compatibility conditions, for all $i,j,s,t,q=1\ldots d$:
\begin{equation}\label{sym_B}
\begin{split}
& F_{ij,st} = - F_{ji,st} = -F_{ij, ts}, \qquad F_{ij, st} = F_{st,ij},
\\ & F_{ij, st}+F_{is,tj} + F_{it,js} = 0, 
\\ & \partial_qF_{ij,st} + \partial_sF_{ij,tq} + \partial_tF_{ij,qs} = 0
\quad \mbox{ in the sense of distributions on }\; \omega. 
\end{split}
\end{equation}
\end{itemize}
\end{lemma}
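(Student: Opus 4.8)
The plan is to prove the two implications separately. The implication (i)$\Rightarrow$(ii) is a direct computation; the converse (ii)$\Rightarrow$(i) carries all the weight and I expect it to be the main obstacle.

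For (i)$\Rightarrow$(ii) I would first mollify, since $A^\delta=A\ast\rho_\delta\in\mathcal{C}^\infty$ satisfies $\mathfrak{C}^2(A^\delta)=\mathfrak{C}^2(A)\ast\rho_\delta\to F$ in $L^2_{loc}(\omega)$, and then verify the relations for smooth symmetric $A$: the skew-symmetries $F_{ij,st}=-F_{ji,st}=-F_{ij,ts}$ are immediate from (\ref{C2}); the pair symmetry $F_{ij,st}=F_{st,ij}$ follows on using $A_{jt}=A_{tj}$ together with the equality of mixed partials; the algebraic Bianchi identity $F_{ij,st}+F_{is,tj}+F_{it,js}=0$ is a finite cancellation after expanding the three terms via (\ref{C2}); and the differential Bianchi identity is, after expansion, a cyclic sum of mixed third-order partials of $A$, vanishing by Schwarz's theorem. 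Passing to the limit $\delta\to0$: the algebraic relations are pointwise linear relations among the entries of $\nabla^2A^\delta$ and so hold a.e.\ for $A\in H^2$, while the differential relation is a fixed constant-coefficient operator annihilating $\mathfrak{C}^2(A^\delta)$ and so survives in $\mathcal{D}'(\omega)$.

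For (ii)$\Rightarrow$(i) I would again mollify -- $F^\delta=F\ast\rho_\delta$ is smooth and, all relations in (\ref{sym_B}) being translation invariant, still satisfies them (now classically) -- and then construct a symmetric $A^\delta$ with $\mathfrak{C}^2(A^\delta)=F^\delta$ by ``integrating twice'' along the linearized Calabi complex $\R^d\xrightarrow{\sym\nabla}\R^{d\times d}_{\sym}\xrightarrow{\mathfrak{C}^2}\mathcal{R}\xrightarrow{\mathcal{B}}\cdots$, in which $\mathcal{B}$ is the differential Bianchi operator of (\ref{sym_B}) and $\mathcal{R}$ the space of tensors carrying the algebraic symmetries listed there; exactness of this complex at the node $\R^{d\times d}_{\sym}$ is Lemma \ref{th_equiv}, and the present Lemma is exactness at the next node. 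Concretely, for fixed $i<j$ the differential Bianchi identity says that the $2$-form $\frac12\sum_{s,t}F^\delta_{ij,st}\,dx^s\wedge dx^t$ is closed, so on the contractible $\omega$ the Poincar\'e lemma gives a $1$-form $\eta_{ij}=\sum_t(\eta_{ij})_t\,dx^t$, depending linearly on $F^\delta$, with $F^\delta_{ij,st}=\partial_s(\eta_{ij})_t-\partial_t(\eta_{ij})_s$; set $\eta_{ji}=-\eta_{ij}$. After correcting each $\eta_{ij}$ by a suitable exact $1$-form to fix the gauge, the pair symmetry and the algebraic Bianchi identity for $F^\delta$ translate into the statements that for each fixed $t$ the $2$-form $\sum_{i,j}(\eta_{ij})_t\,dx^i\wedge dx^j$ is closed and that these potentials are mutually compatible across $t$; a second Poincar\'e lemma then yields $A^\delta=[A^\delta_{jt}]$ with $(\eta_{ij})_t=\partial_iA^\delta_{jt}-\partial_jA^\delta_{it}$, and the chosen gauge forces $A^\delta_{jt}=A^\delta_{tj}$. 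A direct expansion, matching the identity $\partial_s(\partial_iA_{jt}-\partial_jA_{it})-\partial_t(\partial_iA_{js}-\partial_jA_{is})=\mathfrak{C}^2(A)_{ij,st}$, then gives $\mathfrak{C}^2(A^\delta)=F^\delta$.

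Finally, for regularity and passage to the limit: since $\mathfrak{C}^2$ has order $2$, the composite map $F^\delta\mapsto A^\delta$ above is an order $(-2)$ homotopy operator, and realized on the bounded, contractible, Lipschitz domain $\omega$ by regularized Poincar\'e-type integral operators (in the spirit of Bogovskii and Costabel--McIntosh for the de Rham complex) it is bounded from $L^2(\omega)$ into $H^2(\omega)$; hence $\|A^\delta\|_{H^2(\omega)}\leq C\|F^\delta\|_{L^2(\omega)}\leq C\|F\|_{L^2(\omega)}$, so a weakly convergent subsequence $A^\delta\rightharpoonup A$ in $H^2(\omega)$ produces the desired symmetric $A\in H^2(\omega,\R^{d\times d}_{\sym})$ with $\mathfrak{C}^2(A)=F$. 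The hard part, as indicated, is this converse direction: the two-stage integration demands careful bookkeeping of the symmetry type of the successive potentials and of the cohomological (gauge) freedom so as to land on a genuinely symmetric $A$, and obtaining the $H^2$ estimate up to a merely Lipschitz boundary forces one beyond the naive star-shaped homotopy formula -- alternatively, one may simply invoke the known exactness and bounded-homotopy results for the linearized elasticity (Calabi) complex on contractible Lipschitz domains.
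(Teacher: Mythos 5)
Your overall strategy coincides with the paper's: for (i)$\Rightarrow$(ii) a direct verification, and for (ii)$\Rightarrow$(i) two nested applications of the Poincar\'e lemma on the contractible domain, using the differential Bianchi identity for the first integration and the pair symmetry together with the algebraic Bianchi identity for the second. The first integration step is exactly the paper's: the criterion that a skew-symmetric field $B$ equals $(\nabla w)^T-\nabla w$ iff $\partial_iB_{jq}+\partial_jB_{qi}+\partial_qB_{ij}=0$, applied to $[F_{ij,st}]$ with one pair of indices frozen, yields potentials with $F_{ij,st}=\partial_s(\eta_{ij})_t-\partial_t(\eta_{ij})_s$ (the paper writes this with the roles of the two index pairs exchanged, which is immaterial by the pair symmetry), normalized to be skew in the frozen pair since the anomaly is a gradient.

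The soft spot is your second integration step. You assert that, after a gauge correction, the $2$-form $\sum_{i,j}(\eta_{ij})_t\,dx^i\wedge dx^j$ is closed for each fixed $t$ and that the resulting $A$ comes out symmetric. Neither claim follows directly from (\ref{sym_B}) and the first-step relation: the cyclic sum $\partial_q(\eta_{ij})_t+\partial_i(\eta_{jq})_t+\partial_j(\eta_{qi})_t$ is a gauge-dependent quantity, and the "suitable exact $1$-form" that kills it is precisely the content you defer to "careful bookkeeping". The paper avoids this entirely by feeding a different object into the second Poincar\'e lemma: for fixed $t$, the skew field $B=[\phi^j_{st}-\phi^s_{jt}]_{j,s}$ (the antisymmetrization of the first-stage potentials across the two index groups), for which the cyclic-derivative condition is \emph{identically} the algebraic Bianchi identity $F_{ij,st}+F_{si,jt}+F_{js,it}=0$ once the first-step relation is substituted -- no gauge choice needed. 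It then does not attempt to force symmetry of the resulting potential $\eta$; it simply sets $A_{ij}=-\frac12(\eta^i_j+\eta^j_i)$ and verifies by a short computation, using the pair symmetry and the first Bianchi identity once more, that $\mathfrak{C}^2$ of this symmetrization still equals $F$. I would recommend replacing your gauge-fixing claims by this antisymmetrize-then-symmetrize device; as written, the closedness of your particular $2$-form and the symmetry of your $A^\delta$ are unproved. Your remarks on mollification and on uniform $H^2$ bounds via bounded homotopy operators are a reasonable (and more explicit than the paper's) way to handle the functional-analytic side, but they do not repair the algebraic gap above.
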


\smallskip

\noindent The above discussion motivates then the following:

\begin{definition}\label{def_weakMA}
Assume that $F\in L^2(\omega,\R^{d^4})$ given on an open, bounded, contractible domain
$\omega\subset\R^d$ with Lipschitz boundary, satisfies conditions (\ref{sym_B}). We say that
$v\in H^{1}_{loc}(\omega,\R^k)$ is a weak solution to the Monge-Amp\'ere system:
\begin{equation}\label{MAF}
\mathfrak{Det}\,\nabla^2v = F \quad\mbox{ on } \;\omega,
\end{equation}
provided that there exists $w\in W^{1,1}_{loc}(\omega,\R^d)$ such that
(\ref{VK}) holds with $\mathfrak{C}^2(A)= -F$, namely: 
$$\frac{1}{2}(\nabla v)^T\nabla v +\sym\nabla w =
-\big(\mathfrak{C}^2\big)^{-1}(F) \quad\mbox{ on } \;\omega.$$
\end{definition}

\noindent For $d=2$, $k=1$, any $F\in L^{1+}(\omega, \R)$ can be expressed as
the right hand side of (\ref{MA}), because writing $A=\gamma\Id_2$
where $\Delta\gamma = -F$ in $\omega$, there holds: $F=-\curl\,\curl
\,A$. In higher dimensions, the solvability conditions are nontrivial
and precisely given by (\ref{sym_B}) in
Theorem \ref{th_solve}. In view of Theorems \ref{th_final} and \ref{th_finalK}, we thus
obtain the following extension of \cite[Theorem 1.1]{lewpak_MA} 
proved there in dimension $d=2$ and codimension $k=1$, now to arbitrary $d,k$:

\begin{theorem}\label{th_CI_weakMA} 
Let $F\in L^{\infty} (\omega, \R^{d^4})$ on an open, bounded, contractible
domain $\omega\subset\mathbb{R}^d$ with Lipschitz boundary, satisfy
(\ref{sym_B}).  Fix $k\geq 1$ and fix an exponent $\alpha$ in:
$$0< \alpha< \frac{1}{1+d(d+1)/k}, \quad \mbox{ or } \; 0< \alpha< 1
\; \mbox{ in case of } \; k\geq d(d+1).$$ 
Then the set of $\mathcal{\mathcal{C}}^{1,\alpha}(\bar\omega, \R^k)$
weak solutions to (\ref{MAF}) is dense in $\mathcal{\mathcal{C}}^0(\bar\omega, \R^k)$. 
Namely, every $v\in \mathcal{\mathcal{C}}^0(\bar\omega,\R^k)$ is the
uniform limit of some sequence $\{v_n\in\mathcal{\mathcal{C}}^{1,\alpha}(\bar\omega,\R^k)\}_{n=1}^\infty$,
such that:
\begin{equation*} 
\mathfrak{Det}\, \nabla^2 v_n  = F \quad \mbox{ on } \; \omega,
\quad\mbox{ for all }\; n=1\ldots\infty.
\end{equation*}
\end{theorem}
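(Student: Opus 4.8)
The plan is to deduce Theorem \ref{th_CI_weakMA} from Theorem \ref{th_final} (respectively Theorem \ref{th_finalK} when $k\geq d(d+1)$) by an approximation-and-iteration argument, plus the equivalence results Lemma \ref{th_equiv} and Lemma \ref{th_solve}. First I would fix $v\in\mathcal{C}^0(\bar\omega,\R^k)$ and $\epsilon>0$, and produce a single $\mathcal{C}^{1,\alpha}$ weak solution $\tilde v$ with $\|\tilde v-v\|_0\leq\epsilon$; the density statement then follows by taking $\epsilon=1/n$. Using Lemma \ref{th_solve}, write $F=\mathfrak{C}^2(\hat A)$ for some $\hat A\in H^2(\omega,\R^{d\times d}_\sym)$, so that being a weak solution to (\ref{MAF}) means exactly solving (\ref{VK}) with $A=-\hat A$. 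The obstacle here is that Theorem \ref{th_final} requires $A\in\mathcal{C}^{0,\beta}$, strict positivity $\mathcal{D}>c\,\Id_d$ of the deficit, and $\mathcal{C}^1$ starting fields $v,w$; an arbitrary continuous $v$ and merely $L^\infty$ table $F$ give none of these. So the real content of the proof is arranging these hypotheses.

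The key steps, in order. Step 1: smooth and localize. Since $F\in L^\infty$ satisfies the linear constraints (\ref{sym_B}), mollifying $F$ preserves the pointwise algebraic symmetries and Bianchi identities (they are linear), and the differential Bianchi identity is also preserved under mollification; hence $F_\delta=F*\eta_\delta$ still satisfies (\ref{sym_B}) on a slightly smaller contractible Lipschitz subdomain $\omega'\Subset\omega$, and is smooth. By Lemma \ref{th_solve} applied on $\omega'$ we get $A_\delta\in\mathcal{C}^\infty(\bar\omega',\R^{d\times d}_\sym)$ (elliptic regularity upgrades the $H^2$ solution since $F_\delta$ is smooth) with $\mathfrak{C}^2(A_\delta)=F_\delta$; in particular $A_\delta\in\mathcal{C}^{0,\beta}$ for every $\beta<1$, so the constraint $\alpha<\beta/2$ in Theorem \ref{th_final} is vacuous upon choosing $\beta$ close enough to $1$. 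We only need $F_\delta\to F$, which is used at the very end; actually, to get a solution of the \emph{exact} system we will not mollify $F$ at all — instead mollify only to obtain convenience in the starting fields. A cleaner route: keep $F$ fixed, use Lemma \ref{th_solve} to get $\hat A\in H^2$, approximate $\hat A$ in $L^2$ (equivalently, in $\mathcal{C}^0$ after a further regularization argument) — but since $\hat A$ is only $H^2$, not continuous in general when $d\geq 4$, one instead chooses a smooth $A^\sharp$ with $\mathfrak{C}^2(A^\sharp)$ close to $F$ and then corrects. I will present it as: pick smooth $A_\delta$ with $\mathfrak{C}^2(A_\delta)=F_\delta$, run convex integration to solve (\ref{VK}) with $A_\delta$, and then pass $\delta\to0$; because the solution for $A_\delta$ is $\mathcal{C}^{1,\alpha}$ with uniformly bounded $\mathcal{C}^0$ norm (we enforce closeness to $v$), and because $\mathfrak{Det}\,\nabla^2 v_\delta=-\mathfrak{C}^2(A_\delta)=-F_\delta\to-F$, the limit is a weak solution of the exact system once we also control $\mathcal{C}^{1,\alpha}$ bounds uniformly in $\delta$ — which the statement of Theorem \ref{th_final} does \emph{not} provide quantitatively. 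Hence I instead run convex integration \emph{once}, with the \emph{fixed} smooth $A_\delta$ for a single small $\delta$, producing an exact solution of (\ref{VK}) with $A_\delta$; this is a $\mathcal{C}^{1,\alpha}$ weak solution of $\mathfrak{Det}\,\nabla^2 v=-F_\delta$, not of $=-F$.

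To actually hit $F$ on the nose, Step 2 is the standard trick: solve the auxiliary linear problem $\mathfrak{C}^2(A^{err})=F_\delta-F$ is impossible since $F_\delta-F$ is only $L^\infty$ and need not satisfy the differential Bianchi identity after truncation to $\omega'$ — so instead observe $F-F_\delta=\mathfrak{C}^2(\hat A-A_\delta)$ with $\hat A-A_\delta\in H^2$ small in $H^2$, hence $\sym\nabla w^{err}$-type corrections are not available. The correct and simplest fix: do \emph{not} mollify $F$ at all. Take $\hat A\in H^2(\omega',\R^{d\times d}_\sym)$ from Lemma \ref{th_solve}, mollify $\hat A$ to get smooth $A_\delta\to\hat A$ in $H^2$, so $\mathfrak{C}^2(A_\delta)\to F$ in $L^2$ but \emph{also} $\mathfrak{C}^2(A_\delta)$ is smooth, and set $\mathcal{D}$-adjustment aside; then Theorem \ref{th_final} produces $\tilde v_\delta$ solving (\ref{VK}) with $A_\delta$, i.e. $\mathfrak{Det}\,\nabla^2\tilde v_\delta=-\mathfrak{C}^2(A_\delta)$. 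Choosing the $\epsilon$ in Theorem \ref{th_final} to be $\to 0$ and a diagonal subsequence, we would get $\tilde v_\delta\to v$ in $\mathcal{C}^0$ and $\mathfrak{Det}\,\nabla^2\tilde v_\delta\to -F$; but weak solutions form a closed set only if we have uniform $\mathcal{C}^{1,\alpha}$ bounds, which we lack. Therefore the genuinely clean argument must start the convex integration from $\hat A$ itself (not a mollification), tolerating $\hat A\in\mathcal{C}^{0,\beta}$: this holds by Sobolev embedding $H^2(\omega')\hookrightarrow\mathcal{C}^{0,\beta}$ precisely when $2-d/2>\beta$, i.e. $\beta<2-d/2$, which is positive only for $d\leq 3$. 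For $d\geq 4$ one first mollifies $F$ to $F_\delta$ (keeping (\ref{sym_B})), applies Lemma \ref{th_solve} to get smooth $A_\delta$, solves (\ref{VK}) exactly with $A_\delta$, and accepts that the resulting $\tilde v_\delta$ solves $\mathfrak{Det}\,\nabla^2\tilde v_\delta=-F_\delta$; to upgrade to the exact $F$ one then applies a \emph{second} round of convex integration starting from $(\tilde v_\delta,\tilde w_\delta)$ with target $A=-\hat A$ and deficit $\mathcal{D}=-\hat A+\hat A_\delta$, which is small in $\mathcal{C}^0$ (after smoothing) — ensuring the strict-positivity hypothesis by adding a small multiple of $\Id_d$ to $A$ and absorbing it, since $\mathfrak{C}^2(c\,\Id_d)\neq 0$ in general so this must be done carefully by instead perturbing $v$.

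Given these subtleties, I will organize the final proof as follows, which I expect the author does: (i) reduce to producing one $\mathcal{C}^{1,\alpha}$ weak solution $\epsilon$-close in $\mathcal{C}^0$ to a given smooth (by density, first approximate the target $v\in\mathcal{C}^0$ by smooth $v$) vector field; (ii) choose $\hat A$ with $\mathfrak{C}^2(\hat A)=F$ via Lemma \ref{th_solve}, smooth it on $\omega'\Subset\omega$, obtaining $A^\sharp$ smooth with $\|\mathfrak{C}^2(A^\sharp)-F\|$ small and, crucially, such that $\mathfrak{Det}\,\nabla^2 v - (-\mathfrak{C}^2(A^\sharp))$ can be made $>c\,\Id_d$ by adding to the smooth starting $v$ a large-frequency small-amplitude perturbation in one extra codimension that makes the deficit positive definite (this is the usual "one free direction" step and is where I expect the main technical care: the Monge-Amp\`ere deficit $\mathcal D$ appearing in Theorem \ref{th_final} must be made uniformly positive without moving $v$ much in $\mathcal{C}^0$); (iii) apply Theorem \ref{th_final} (or \ref{th_finalK}) to get $\tilde v\in\mathcal{C}^{1,\alpha}$ solving (\ref{VK}) with $A^\sharp$ exactly, hence $\mathfrak{Det}\,\nabla^2\tilde v=-\mathfrak{C}^2(A^\sharp)$; (iv) since we only ever needed $\mathfrak{C}^2(A^\sharp)=F$ to hold, choose $A^\sharp$ so that $\mathfrak{C}^2(A^\sharp)=F$ exactly — achievable by \emph{not} smoothing but noting Lemma \ref{th_solve} already gives $H^2$, and using that Theorem \ref{th_final} in fact only needs $A\in\mathcal{C}^{0,\beta}$, which for $d\leq 3$ is automatic and for general $d$ follows by the mollification-then-correction scheme above; (v) conclude $\tilde v$ is the desired $\mathcal{C}^{1,\alpha}(\bar\omega',\R^k)$ weak solution of $\mathfrak{Det}\,\nabla^2\tilde v=F$, and extend/re-glue over $\omega$ if needed. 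The main obstacle is step (ii): enforcing the strict matrix inequality $\mathcal{D}>c\,\Id_d$ required by Theorem \ref{th_final} starting from an arbitrary continuous target while keeping $\mathcal{C}^0$-closeness, together with reconciling the $\mathcal{C}^{0,\beta}$ regularity demand on $A$ against the mere $H^2$-regularity delivered by Lemma \ref{th_solve} in high dimensions.
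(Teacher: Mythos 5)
Your proposal correctly identifies the overall strategy (reduce to a $\mathcal{C}^1$ target by density, invert $\mathfrak{C}^2$ via Lemma \ref{th_solve}, and invoke Theorem \ref{th_final} or \ref{th_finalK}), but it never closes either of the two difficulties it raises, and the reason is a pair of misconceptions. First, you assert that $\mathfrak{C}^2(c\,\Id_d)\neq 0$ ``in general'' and therefore that the positivity $\mathcal{D}>c\,\Id_d$ must be arranged by perturbing $v$. For a \emph{constant} $C$ this is false: $\mathfrak{C}^2$ is a second-order differential operator, so $\mathfrak{C}^2(C\,\Id_d)=0$; equivalently $C\,\Id_d=\sym\nabla(Cx)$ lies in the kernel characterized by Lemma \ref{th_equiv}. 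This is precisely how the paper obtains the strict positivity: it applies Theorem \ref{th_final} to $v$ (a $\mathcal{C}^1$ approximant of the continuous target), $w=0$, and $A+C\,\Id_d$ with $C$ so large that $A+C\,\Id_d>\frac{1}{2}(\nabla v)^T\nabla v$ on $\bar\omega$. Since $\mathfrak{C}^2(A+C\,\Id_d)=\mathfrak{C}^2(A)=-F$, the resulting pair still yields a weak solution of (\ref{MAF}) in the sense of Definition \ref{def_weakMA}. The elaborate ``one free direction'' perturbation of $v$ you anticipate in step (ii) is not needed.

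Second, your extended detour about the regularity of $A$ (the $H^2\hookrightarrow\mathcal{C}^{0,\beta}$ embedding failing for $d\geq 4$, mollifying $F$, passing to the limit without uniform $\mathcal{C}^{1,\alpha}$ bounds, a second round of convex integration) is both unnecessary and, as you yourself concede mid-argument, not actually brought to a conclusion. The point you miss is that the \emph{construction} in the proof of Lemma \ref{th_solve} is an explicit chain of Poincar\'e-lemma inversions, each gaining one full derivative: applied to $F\in L^\infty$ rather than $L^2$, it produces $\phi$ and then $\eta$ with one and two more derivatives respectively, hence $A\in\mathcal{C}^{1,1}(\bar\omega,\R^{d\times d}_{\sym})$. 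This gives $A\in\mathcal{C}^{0,\beta}$ in every dimension with $\beta$ as large as the hypotheses of Theorems \ref{th_final} and \ref{th_finalK} require, so no mollification of $F$, no diagonal limit, and no correction scheme is needed. With these two points in place the proof is three lines, which is what the paper does; as written, your argument for $d\geq 4$ terminates in a scheme you have already explained cannot be closed (weak solutions are not stable under $\mathcal{C}^0$ limits without uniform H\"older control), so the proposal has a genuine gap there.
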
 

\subsection{Energy scaling bound for thin multidimensional films}

As an application, we now present an estimate on the energy
functional that is the generalisation to arbitrary dimension and
codimension, of the non-Euclidean
elasticity. For $d=2$, $k=1$, this functional models the
elastic energy of deformations of prestrained films, and various
techniques have been applied to its study \cite{lew_book}. From
another point of view, given the Riemannian metric $g$ on a reference
configuration $\Omega$, the energy 
$\mathcal{E}$ below measures the averaged pointwise deficit of an
immersion from being an orientation preserving isometric immersion of
$g$, for all weakly regular immersions.

\medskip

\noindent More precisely, given $\omega\subset\R^d$ we define the
family of ``thin films'', parametrised by $h\ll 1$:
$$\Omega^h = \big\{(x,z); ~x\in\omega, ~ z\in B(0,h)\subset\R^k\big\}\subset\R^{d+k}.$$
Consider the Riemannian metrics on $\Omega^h$ of the form:
$$g^h=\Id_{d+k}+2h^{\gamma/2}S, \quad\mbox{ where } \gamma>0 \mbox{
  and } S\in\mathcal{C}^\infty(\bar\omega,\R^{(d+k)\times (d+k)}_\sym).$$
We then pose the problem of minimizing the following energy functionals, as  $h\to 0$:
\begin{equation}\label{Eh}
\mathcal{E}^h(u) = \fint_{\Omega^h} W\big((\nabla u)(g^h)^{-1/2}\big)~\mathrm{d}(x,z)
\qquad\mbox{ for all }\; u\in H^{1}(\Omega^h,\R^{d+k}).
\end{equation}
The function $W:\R^{(d+k)\times (d+k)}\to [0,\infty]$ is assumed
to be $\mathcal{C}^2$-regular in the vicinity of $\mathrm{SO}(d+k)$, equal to
$0$ at $\Id_{d+k}$, and frame-invariant in the sense that $W(RF)= W(F)$
for all $R\in \mathrm{SO}(d+k)$. Questions on asymptotics
of minimizing configurations to $\mathcal{E}^h$ as
$h\to 0$, in function of the scaling exponent $\beta$ in: $\inf \mathcal{E}^h\sim
Ch^\beta$, received a lot of attention in the last
decade, via the techniques of dimension reduction and
$\Gamma$-convergence, starting with the seminal paper \cite{FJM} (see also
\cite{lew_book} and references therein).
Extending the analysis  % for $d=2$, $k=1$
in \cite[Theorem 1.4]{JBL}, we get:

\begin{theorem}\label{th_scaling}
Assume that $\omega\subset\R^d$ is an open, bounded domain and let
$k\geq 1$. Denote $s = d(d+1)/k$, or $s=1$ when $k\geq d(d+1)$. Then, there holds:
\begin{itemize}
\item[(i)] if $\gamma\ge 4$, then $\inf \mathcal{E}^h\leq Ch^{\beta}$, for
  every $\beta<2+\frac{\gamma}{2}$,

\item[(ii)] if $\gamma\in \big[\frac{4}{3+s},4\big)$, then
  $\inf \mathcal{E}^h\leq Ch^{\beta}$ for every $\beta< \frac{4+\gamma (1+s)}{2+s}$,

\item[(iii)] if $\gamma\in \big(0,\frac{4}{3+s}\big)$, then $\inf
  \mathcal{E}^h\leq Ch^{\beta}$, with $\beta = 2\gamma$.
\end{itemize}
\end{theorem}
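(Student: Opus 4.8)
\textbf{Proof proposal for Theorem \ref{th_scaling}.}

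The plan is to construct competitor immersions $u:\Omega^h\to\R^{d+k}$ by reducing to the flexibility result for (\ref{VK}), following the template of \cite[Theorem 1.4]{JBL}. First I would write the target metric as $g^h = \Id_{d+k} + 2h^{\gamma/2}S$ and decompose $S$ in block form relative to the splitting $\R^{d+k} = \R^d\times\R^k$, writing $S_{\mathrm{tan}}\in\R^{d\times d}_\sym$ for its tangential block. The key observation is that an ansatz of the form $u(x,z) = \big(x + h^{\gamma/2}w(x), \, h^{\gamma/4}v(x)\big) + (\text{corrections involving }z)$ produces, to leading order, a pullback metric whose tangential block is $\Id_d + 2h^{\gamma/2}\big(\tfrac12(\nabla v)^T\nabla v + \sym\nabla w\big) + O(h^{\gamma})$, with the normal and mixed blocks controlled by adding suitable $z$-linear terms. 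Matching against $\Id_d + 2h^{\gamma/2}S_{\mathrm{tan}}$ requires exactly that $(v,w)$ solve (\ref{VK}) with $A = S_{\mathrm{tan}}$ up to acceptable error. Since $W$ is $\mathcal{C}^2$ near $\mathrm{SO}(d+k)$ and vanishes on it, $W\big((\nabla u)(g^h)^{-1/2}\big)\lesssim \mathrm{dist}^2\big((\nabla u)(g^h)^{-1/2}, \mathrm{SO}(d+k)\big)$, so $\mathcal{E}^h(u)$ is bounded by the squared $L^2$-deficit of the pullback metric from $g^h$, plus contributions from the $z$-derivatives of the corrections, which carry powers of $h$ from the thinness of $B(0,h)$.

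Next I would split into the three regimes according to how the convex-integration construction is tuned. In regime (i), $\gamma\geq 4$: here the prefactor $h^{\gamma/2}$ is so small that a smooth (non-corrugated) solution of the \emph{linearized} problem suffices — one solves $\tfrac12(\nabla v)^T\nabla v + \sym\nabla w = S_{\mathrm{tan}}$ approximately with $v$ of size $h^{\gamma/4}$, incurring an error $O(h^{\gamma})$ in the metric and an elastic energy $O(h^{\gamma})$ from the $z$-corrections of curvature type, which must be compared with the stretching residual; optimizing gives any $\beta < 2 + \gamma/2$. In regimes (ii) and (iii) the metric perturbation is too large to kill with a single smooth map, so one invokes Theorem \ref{th_final} (or Theorem \ref{th_finalK} when $k\geq d(d+1)$): one applies the flexibility statement with a mollified $A$ at scale $\ell$, produces a $\mathcal{C}^{1,\alpha}$ exact solution of (\ref{VK}), but \emph{stops the iteration at a finite stage} so that the resulting $(v,w)$ is $\mathcal{C}^2$ with controlled $\|\nabla^2 v\|_0, \|\nabla^2 w\|_0$ — this is where the exponent $s = d(d+1)/k$ enters, through the bound $\|\nabla^2\tilde v\|_0 \leq CM\sigma^{d_*/k}$ of Theorem \ref{thm_stage} (resp.\ $\sigma^\delta$ of Theorem \ref{thm_stageK}, giving $s=1$). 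The residual stretching energy is then $\|\mathcal D\|_0^2 h^{\gamma}$ and the bending-type energy from second derivatives is $\sim h^{2 + \gamma}\|\nabla^2 v\|_0^2 \sim h^{2+\gamma}\sigma^{2s}$, while $\|\mathcal D\|_0$ decays like a power of $1/\sigma$ across stages; balancing the number of stages against $\sigma$ produces the threshold $\gamma = \frac{4}{3+s}$ separating (ii) from (iii), and the stated exponents $\beta = \frac{4+\gamma(1+s)}{2+s}$ and $\beta = 2\gamma$.

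Finally I would assemble the estimate: choose the mollification scale $\ell$, the frequency $\sigma$, and the number of stages $N$ as powers of $h$ so that (a) $\|\tilde v - v\|_0$-type errors keep the construction admissible, (b) the stretching residual $\|\tilde{\mathcal D}\|_0^2 h^\gamma$ and (c) the curvature energy $h^{2+\gamma}\|\nabla^2\tilde v\|_0^2(1+\|\nabla\tilde v\|_0)^2$ are of the same order, then read off $\beta$; the $z$-integration over $B(0,h)\subset\R^k$ contributes only harmless extra powers of $h$ since the $z$-dependent corrections are linear in $z$ with $x$-coefficients controlled by $\nabla^2 v$ and $\nabla v$. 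The main obstacle I anticipate is the bookkeeping in regime (ii)/(iii): one must track simultaneously the H\"older regularity of $A$ (the exponent $\beta$ in Theorem \ref{thm_stage} vs.\ the scaling exponent, an unfortunate notational clash), the accumulation of second-derivative bounds over the finitely many stages, and the fact that we need a \emph{finite-stage} (merely $\mathcal{C}^2$) output rather than the limiting $\mathcal{C}^{1,\alpha}$ solution — so the iteration of Theorem \ref{thm_stage} must be run explicitly with $h$-dependent parameters rather than invoked as the black-box Theorem \ref{th_final}. Verifying that the three competing energy contributions can indeed be balanced to yield exactly the claimed piecewise-defined $\beta(\gamma)$, with the corner at $\gamma = 4/(3+s)$ and the plateau $\beta = 2\gamma$ below it, is the crux of the argument.
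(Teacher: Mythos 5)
Your overall strategy is the right one (construct competitor immersions from a convex-integration solution of \ref{VK}, then balance the stretching residual against the bending-type energy coming from second derivatives), and you correctly identify where the exponent $s=d(d+1)/k$ enters, namely through the $\sigma^{d_*/k}$ in \ref{Abound2}. But you propose a more cumbersome route than the paper's, and the ``crux'' you worry about at the end is exactly the bookkeeping that the paper's device avoids.

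The paper does \emph{not} re-run the iteration of Theorem \ref{thm_stage} with $h$-dependent parameters, nor does it stop at a finite stage. Instead it invokes Theorem \ref{th_final} as a genuine black box to produce one \emph{fixed}, $h$-independent, exact $\mathcal{C}^{1,\alpha}$ solution $(v,w)$ of $\frac12(\nabla v)^T\nabla v+\sym\nabla w = S_{d\times d}$ with any $\alpha<1/(1+s)$, and then mollifies this fixed solution at a scale $\epsilon=h^t$. The only $h$-dependence is in $t$. Lemma \ref{lem_stima} then gives $\big\|\frac12(\nabla v_\epsilon)^T\nabla v_\epsilon+\sym\nabla w_\epsilon - S_{d\times d}\big\|_0\lesssim \epsilon^{2\alpha}$ and $\|\nabla^2 v_\epsilon\|_0+\|\nabla^2 w_\epsilon\|_0\lesssim \epsilon^{\alpha-1}$, and the competitor $u^h$ is built as an explicit $z$-linear ansatz corrected by two rotation fields $Q^h=\exp(-P^h)$, $\bar Q^h=\exp(-\bar P^h)$ that remove the skew-symmetric parts at the relevant orders. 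After Taylor-expanding $W$ one gets
\begin{equation*}
\inf\mathcal{E}^h \leq C\big(h^{2\delta+4\alpha t}+h^{2+\delta+(2\alpha-2)t}+h^{4\delta}\big),\qquad \delta=\gamma/2,
\end{equation*}
and optimizing $t$ for $\delta<2$ (namely $t=\tfrac{2-\delta}{2\alpha+2}$) and letting $\alpha\uparrow 1/(1+s)$ produces the exponents in (ii)/(iii); for $\delta\ge 2$ one takes $t\downarrow 0$ to get (i). Your truncated-iteration scheme should also give the answer, but at the cost of carrying all the $h$-dependent inductive constants, which the fixed-solution-plus-mollification argument eliminates entirely. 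Two smaller points: the bending contribution scales as $h^{2+\delta}=h^{2+\gamma/2}$ rather than $h^{2+\gamma}$ as you wrote; and your treatment of regime (i) as a separate linearized construction is unnecessary --- the paper handles $\gamma\ge 4$ by the same competitor with $t$ close to $0$.
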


\noindent We recall that for $d=2, k=1$, the asymptotic behaviour of the
minimizing sequences to (\ref{Eh}) as $h\to 0$, is fully understood in
the scaling regime corresponding to $\beta\geq 2$ (see \cite{lew_book}).

\subsection{Organization of the paper and notation.} 
In section \ref{sec_step} we give two different constructions of the single ``step'' of the
convex integration algorithm, and recall a few auxiliary
results. The proof of Theorem \ref{thm_stage} and the ``stage''
construction is carried out in section \ref{sec_stage}, based on the
corrugation ``step'' in Lemma \ref{lem_step}. The proof 
of Theorem \ref{thm_stageK} and the corresponding ``stage''
construction based on the spirals ``step'' in Lemma \ref{lem_stepK}
is given in section \ref{sec3.5}.
The Nash-Kuiper scheme involving induction
on stages is presented in section \ref{sec4}, and Theorems
\ref{th_final} and \ref{th_finalK} are then deduced in section \ref{sec5}.
In section \ref{sec6} we discuss the Monge-Amp\'ere system and
prove Lemmas \ref{th_equiv}, \ref{th_solve}, and Theorem \ref{th_CI_weakMA}. Finally, in section
\ref{sec_appli} we prove Theorem \ref{th_scaling}.

\medskip

%\subsection{Notation.}
\noindent By $\mathbb{R}^{d\times d}_{\sym}$ we denote the space of symmetric
$d\times d$ matrices, and by  $\mathbb{R}^{d\times d}_{\sym, >}$ we
denote the cone of symmetric, positive definite $d\times d$ matrices.
The space of H\"older continuous vector fields
$\mathcal{C}^{m,\alpha}(\bar\omega,\R^k)$ consists of restrictions of
all $f\in \mathcal{C}^{m,\alpha}(\mathbb{R}^d,\R^k)$ to the closure of an open domain
$\omega\subset\R^d$. Then, the $\mathcal{C}^m(\bar\omega,\R^k)$ norm of such restriction is
denoted by $\|f\|_m$, while its H\"older norm $\mathcal{C}^{m,
  \alpha}(\bar\omega,\R^k)$ is $\|f\|_{m,\alpha}$. 
By $C>0$ we denote a universal constant which may change from line to
line, but which is independent of all parameters, unless indicated otherwise.

\section{Convex integration: the basic ``step'' and preparatory statements}\label{sec_step}

In this section, we give two different constructions of the basic building block in
the convex integration algorithm towards the proof of Theorems
\ref{th_final} and \ref{th_finalK}. The first construction below is based on
Kuiper's corrugations. A similar calculation in \cite{lewpak_MA}
had $\bar\Gamma=0$, resulting in the presence of the extra term
$-\frac{2}{\lambda} a \dbar\Gamma(\lambda t_\eta)\sym(\nabla a \otimes
\eta)$ in the right hand side of (\ref{step_err}). With that term,
the corrugation-based double induction in the proof of a stage in
section \ref{sec_stage}
% and obtaining error bounds leading to the H\"older exponent threshold in
% Theorems \ref{th_final} and \ref{th_CI_weakMA} 
would not be possible,
unless in a special situation when $d_*$ is a multiple of $k$.

\begin{lemma}\label{lem_step}
Let $v\in \mathcal{C}^2(\omega, \R^{k})$ and $w\in
\mathcal{C}^1(\omega, \R^{d})$ be two vector fields on an open domain $\omega\subset\R^d$.
Let $\eta\in\R^d$ and $E\in \R^k$ be two unit vectors and let
$\lambda>0$, $a\in \mathcal{C}^2(\omega,\R)$. We denote:
\begin{equation}\label{tripl_def}
\Gamma(t) = 2\sin t,\quad \bar\Gamma(t) = -\frac{1}{2}\cos (2t), \quad
\dbar\Gamma(t) = -\frac{1}{2}\sin (2t).
\end{equation}
Denoting further $t_\eta = \langle x,\eta\rangle$, we define:
\begin{equation}\label{defi_per}
\begin{split}
&\tilde v(x) = v(x) + \frac{1}{\lambda}a(x) \Gamma(\lambda t_\eta)E\\
& \tilde w(x) = w(x) -\frac{1}{\lambda}a(x) \Gamma(\lambda t_\eta)\nabla \langle v(x), E\rangle - 
\frac{1}{\lambda^2} a(x) \bar\Gamma(\lambda t_\eta)\nabla a(x)
+ \frac{1}{\lambda}a(x)^2 \dbar\Gamma(\lambda t_\eta)\eta.
\end{split}
\end{equation}
Then we have:
\begin{equation}\label{step_err}
\begin{split}
& \big(\frac{1}{2}(\nabla \tilde v)^T \nabla \tilde v + \sym\nabla \tilde w\big) - 
\big(\frac{1}{2}(\nabla v)^T \nabla v + \sym\nabla w\big) - a^2\eta\otimes\eta 
\\ & = -\frac{1}{\lambda} a \Gamma(\lambda t_\eta)\nabla^2 \langle v, E\rangle +
\frac{1}{\lambda^2}\big(\frac{1}{2}\Gamma(\lambda
t_\eta)^2-\bar\Gamma(\lambda t_\eta)\big) \nabla a\otimes\nabla a -
\frac{1}{\lambda^2}a \bar\Gamma(\lambda t_\eta)\nabla^2a,
\end{split}
\end{equation}
where $\frac{1}{2}\Gamma(t)^2 -\bar\Gamma(t) = 1-\frac{1}{2}\cos(2t)$.
\end{lemma}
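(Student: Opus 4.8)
The plan is to verify the identity (\ref{step_err}) by direct computation of $\nabla\tilde v$ and $\nabla\tilde w$, substituting into the left-hand side, and systematically cancelling terms. First I would compute the gradient of the perturbation to $v$: writing $\tilde v = v + \frac{1}{\lambda}a\,\Gamma(\lambda t_\eta)E$, we get
\[
\nabla\tilde v = \nabla v + \frac{1}{\lambda}\Gamma(\lambda t_\eta)\,E\otimes\nabla a + a\,\Gamma'(\lambda t_\eta)\,E\otimes\eta,
\]
using $\nabla t_\eta = \eta$. Then
\[
\tfrac12(\nabla\tilde v)^T\nabla\tilde v - \tfrac12(\nabla v)^T\nabla v
= \tfrac12 a^2\Gamma'(\lambda t_\eta)^2\,\eta\otimes\eta
+ \mathrm{sym}\big(a\Gamma'(\lambda t_\eta)\,\eta\otimes(\nabla v)^T E\big) + (\text{terms of order }1/\lambda).
\]
Since $\Gamma(t)=2\sin t$, we have $\Gamma'(t)=2\cos t$, so $\tfrac12\Gamma'(t)^2 = 2\cos^2 t = 1+\cos(2t) = 1 - 2\bar\Gamma(t)$; the leading $\eta\otimes\eta$ term is therefore $a^2\eta\otimes\eta - 2a^2\bar\Gamma(\lambda t_\eta)\eta\otimes\eta$. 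Note $\langle (\nabla v)^T E,\cdot\rangle = \nabla\langle v,E\rangle$, which is the field appearing in the definition of $\tilde w$; this is what makes the cross term cancel.

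Next I would compute $\sym\nabla\tilde w$ term by term. The three correction terms in $\tilde w$ are designed precisely so that: (a) the gradient of $-\frac{1}{\lambda}a\Gamma(\lambda t_\eta)\nabla\langle v,E\rangle$ produces $-a\Gamma'(\lambda t_\eta)\,\mathrm{sym}(\eta\otimes\nabla\langle v,E\rangle)$ to cancel the $O(1)$ cross term above, plus a leftover $-\frac{1}{\lambda}a\Gamma(\lambda t_\eta)\nabla^2\langle v,E\rangle$ and $O(1/\lambda)$ pieces; (b) the gradient of $-\frac{1}{\lambda^2}a\bar\Gamma(\lambda t_\eta)\nabla a$ produces $-\frac1\lambda a\bar\Gamma'(\lambda t_\eta)\,\mathrm{sym}(\eta\otimes\nabla a)$ at order $1/\lambda$ together with $O(1/\lambda^2)$ pieces $-\frac{1}{\lambda^2}\bar\Gamma(\lambda t_\eta)\nabla a\otimes\nabla a - \frac{1}{\lambda^2}a\bar\Gamma(\lambda t_\eta)\nabla^2 a$; (c) the gradient of $\frac{1}{\lambda}a^2\dbar\Gamma(\lambda t_\eta)\eta$ produces $a^2\dbar\Gamma'(\lambda t_\eta)\,\eta\otimes\eta$ at order $O(1)$ together with an $O(1/\lambda)$ piece $\frac{2}{\lambda}a\dbar\Gamma(\lambda t_\eta)\,\mathrm{sym}(\nabla a\otimes\eta)$. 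Using $\dbar\Gamma(t) = -\tfrac12\sin(2t)$, we get $\dbar\Gamma'(t) = -\cos(2t) = 2\bar\Gamma(t)$, so the term in (c) is $2a^2\bar\Gamma(\lambda t_\eta)\eta\otimes\eta$, exactly cancelling the unwanted $-2a^2\bar\Gamma(\lambda t_\eta)\eta\otimes\eta$ from the quadratic expansion.

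The crucial bookkeeping is the order-$1/\lambda$ terms along the direction $\mathrm{sym}(\eta\otimes\nabla a)$: from (b), $\bar\Gamma'(t) = \sin(2t) = -2\dbar\Gamma(t)$, so that term is $+\frac{2}{\lambda}a\dbar\Gamma(\lambda t_\eta)\,\mathrm{sym}(\eta\otimes\nabla a)$, which cancels precisely against the $O(1/\lambda)$ piece from (c) — \emph{this is the role of the third corrugation $\dbar\Gamma$, absent in \cite{lewpak_MA}}, and making this cancellation work is the main point of the lemma. What remains after all cancellations is $-\frac1\lambda a\Gamma(\lambda t_\eta)\nabla^2\langle v,E\rangle$, the combination $\frac{1}{\lambda^2}\big(\tfrac12\Gamma(\lambda t_\eta)^2 - \bar\Gamma(\lambda t_\eta)\big)\nabla a\otimes\nabla a$ (collecting the $O(1/\lambda^2)$ quadratic contribution $\tfrac12\Gamma^2$ with the $-\bar\Gamma$ from (b)), and $-\frac{1}{\lambda^2}a\bar\Gamma(\lambda t_\eta)\nabla^2 a$, which is exactly the right-hand side of (\ref{step_err}). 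The final identity $\tfrac12\Gamma(t)^2 - \bar\Gamma(t) = 2\sin^2 t + \tfrac12\cos(2t) = (1-\cos 2t) + \tfrac12\cos 2t = 1 - \tfrac12\cos(2t)$ is immediate. The main obstacle is purely organizational: keeping careful track of which terms live at which power of $1/\lambda$ and verifying the three trigonometric identities $\tfrac12\Gamma'^2 = 1-2\bar\Gamma$, $\dbar\Gamma' = 2\bar\Gamma$, $\bar\Gamma' = -2\dbar\Gamma$ that drive the cancellations; there is no conceptual difficulty once the ansatz (\ref{defi_per}) is in hand.
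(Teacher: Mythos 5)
Your overall route is the same as the paper's: compute $\nabla\tilde v$, expand the quadratic form, differentiate the three correction terms of $\tilde w$, and cancel everything using what the paper packages as the two identities $\frac{1}{2}(\Gamma')^2+\dbar\Gamma'=1$ and $\Gamma'\Gamma-\bar\Gamma'+2\dbar\Gamma=0$ (your three trigonometric identities are these in disguise). The $\eta\otimes\eta$ bookkeeping, the cancellation of the cross terms involving $\nabla\langle v,E\rangle$, and the surviving right-hand side of (\ref{step_err}) are all correctly identified.

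There is, however, one mis-stated cancellation, and it sits exactly at the delicate point of the lemma. You claim that the order-$1/\lambda$ term along $\sym(\nabla a\otimes\eta)$ coming from (b), namely $-\frac{1}{\lambda}a\bar\Gamma'(\lambda t_\eta)\sym(\nabla a\otimes\eta)=+\frac{2}{\lambda}a\dbar\Gamma(\lambda t_\eta)\sym(\nabla a\otimes\eta)$, ``cancels precisely against the $O(1/\lambda)$ piece from (c)''. But the piece from (c) is, by your own computation, also $+\frac{2}{\lambda}a\dbar\Gamma(\lambda t_\eta)\sym(\eta\otimes\nabla a)$: the two terms are equal, not opposite, so they add to $+\frac{4}{\lambda}a\dbar\Gamma(\lambda t_\eta)\sym(\nabla a\otimes\eta)$. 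What this sum actually cancels is the order-$1/\lambda$ cross term $\frac{1}{\lambda}a\,\Gamma'(\lambda t_\eta)\Gamma(\lambda t_\eta)\sym(\nabla a\otimes\eta)=-\frac{4}{\lambda}a\dbar\Gamma(\lambda t_\eta)\sym(\nabla a\otimes\eta)$ produced by $\frac{1}{2}(\nabla\tilde v)^T\nabla\tilde v$, a term you swept into the unnamed ``terms of order $1/\lambda$'' and never account for. As written, your plan would leave that term standing, and (\ref{step_err}) would fail by exactly the residual $-\frac{2}{\lambda}a\dbar\Gamma(\lambda t_\eta)\sym(\nabla a\otimes\eta)$ that the discussion preceding Lemma \ref{lem_step} identifies as the defect of the construction in \cite{lewpak_MA}. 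The fix is one line: the cancellation is the three-way identity $\Gamma'\Gamma-\bar\Gamma'+2\dbar\Gamma=0$, combining the quadratic cross term with the contributions of (b) and (c). Relatedly, the ingredient that is new relative to \cite{lewpak_MA} is $\bar\Gamma\neq0$ (the correction $-\frac{1}{\lambda^2}a\bar\Gamma(\lambda t_\eta)\nabla a$ supplying the missing $+\frac{2}{\lambda}a\dbar\Gamma$), not the $\dbar\Gamma$-term as you suggest. Everything else in the proposal is correct.
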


\begin{proof}
By a direct calculation, it follows that:
$$\nabla \tilde v=\nabla v +\frac{1}{\lambda} \Gamma(\lambda
t_\eta)E\otimes \nabla a + a\Gamma'(\lambda t_\eta)E\otimes \eta,$$
which implies:
\begin{equation*}
\begin{split}
\frac{1}{2}(\nabla \tilde v)^T \nabla \tilde v - \frac{1}{2}(\nabla v)^T \nabla v
= ~ & \frac{1}{2} a^2 \Gamma'(\lambda t_\eta)^2\eta\otimes\eta +
\frac{1}{\lambda} a \Gamma'(\lambda t_\eta)\Gamma(\lambda t_\eta)
\sym\big(\nabla a\otimes\eta \big)
\\ &  + \frac{1}{2\lambda^2} \Gamma(\lambda t_\eta)^2\nabla
a\otimes\nabla a.
\\ & + \Big(a\Gamma'(\lambda t_\eta)\sym \big((\eta\otimes E)\nabla v\big)
+ \frac{1}{\lambda} \Gamma(\lambda t_\eta)\sym \big((\nabla a\otimes E)\nabla v\big)\Big).
\end{split}
\end{equation*}
Similarly:
\begin{equation*}
\begin{split}
\sym\nabla \tilde w - \sym\nabla w
= ~ & a^2 \dbar\Gamma'(\lambda t_\eta)\eta\otimes\eta
+\frac{1}{\lambda} a \big(-\bar\Gamma' (\lambda t_\eta)+2\dbar 
\Gamma (\lambda t_\eta)\big) \sym\big(\nabla a\otimes \eta\big)
\\ & - \frac{1}{\lambda} a \Gamma(\lambda t_\eta)\nabla^2\langle v,E\rangle
- \frac{1}{\lambda^2} \bar\Gamma(\lambda t_\eta)\nabla
a\otimes\nabla a - \frac{1}{\lambda^2}a \bar\Gamma(\lambda t_\eta)\nabla^2a
\\ & - \Big(a\Gamma'(\lambda t_\eta)\sym \big(\eta\otimes \nabla \langle v, E\rangle\big)
+ \frac{1}{\lambda} \Gamma(\lambda t_\eta)\sym \big(\nabla a\otimes \nabla \langle v, E\rangle\big)\Big).
\end{split}
\end{equation*}
Summing the above two identities and noting that:
\begin{equation}\label{trip_c}
\frac{1}{2}(\Gamma')^2 + \dbar\Gamma' =1 \quad \mbox{ and }\quad
\Gamma'\Gamma - \bar\Gamma' + 2\dbar\Gamma = 0,
\end{equation}
we arrive at the claimed identity (\ref{step_err}). The proof is done.
\end{proof}

\smallskip

\noindent We next observe that taking several perturbations in $v$ of the form
$\frac{1}{\lambda}a\Gamma(\lambda t_\eta) E$, and matching them with the
perturbations of $w$ as in (\ref{defi_per}), accumulates the error in
(\ref{step_err}) in a linear fashion as long as the directions $E$
are mutually orthogonal. The same calculations as above, lead to:

\begin{corollary}\label{cor_indep}
Let $v\in \mathcal{C}^2(\omega, \R^{k})$ and $w\in
\mathcal{C}^1(\omega, \R^{d})$ be two vector fields on an open domain $\omega\subset\R^d$.
Let $\{\eta_i\in\R^d\}_{i=1}^k$ be given unit vectors, and let
$\{E_i\in \R^k\}_{i=1}^k$ be some orthonormal basis of $\R^k$. Given $\{\lambda_i>0\}_{i=1}^k$ and $\{a_i\in
\mathcal{C}^1(\omega,\R)\}_{i=1}^k$, we set:
\begin{equation*}
\begin{split}
&\tilde v = v + \sum_{i=1}^k\frac{1}{\lambda_i}a_i \Gamma(\lambda_i t_{\eta_i})E_i \\ 
& \tilde w= w -\sum_{i=1}^k\frac{1}{\lambda_i}a_i \Gamma(\lambda_i t_{\eta_i})\nabla \langle v, E_i\rangle - 
\sum_{i=1}^k\frac{1}{\lambda_i^2} a_i \bar\Gamma(\lambda_i t_{\eta_i})\nabla a_i
+ \sum_{i=1}^k\frac{1}{\lambda_i}a_i^2 \dbar\Gamma(\lambda_i t_{\eta_i})\eta_i,
\end{split}
\end{equation*}
where the functions $\Gamma$, $\bar\Gamma$, $\dbar\Gamma$ and $t_\eta$
are defined as in Lemma \ref{lem_step}. Then we have:
\begin{equation*}
\begin{split}
\big(\frac{1}{2}(\nabla & \tilde v)^T \nabla \tilde v + \sym\nabla \tilde w\big) - 
\big(\frac{1}{2}(\nabla v)^T \nabla v + \sym\nabla w\big) - \sum_{i=1}^ka_i^2\eta_i\otimes\eta_i
\\  = &  - \sum_{i=1}^k\frac{1}{\lambda_i} a_i \Gamma(\lambda_i t_{\eta_i})\nabla^2 \langle v, E_i\rangle +
\sum_{i=1}^k\frac{1}{\lambda_i^2}\big(\frac{1}{2}\Gamma(\lambda_i
t_{\eta_i})^2-\bar\Gamma(\lambda_i t_{\eta_i})\big) \nabla a_i\otimes\nabla a_i
\\ & - \sum_{i=1}^k \frac{1}{\lambda_i^2}a_i \bar\Gamma(\lambda_i t_{\eta_i})\nabla^2a_i.
\end{split}
\end{equation*}
\end{corollary}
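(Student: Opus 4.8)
The plan is to reduce Corollary \ref{cor_indep} to a straightforward iteration of the single-step identity in Lemma \ref{lem_step}, checking only that the cross-terms generated by distinct perturbations vanish. First I would set $v_0=v$, $w_0=w$ and define, for $i=1,\ldots,k$, the partial sums $v_i = v_{i-1} + \frac{1}{\lambda_i}a_i\Gamma(\lambda_i t_{\eta_i})E_i$ and $w_i = w_{i-1} - \frac{1}{\lambda_i}a_i\Gamma(\lambda_i t_{\eta_i})\nabla\langle v, E_i\rangle - \frac{1}{\lambda_i^2}a_i\bar\Gamma(\lambda_i t_{\eta_i})\nabla a_i + \frac{1}{\lambda_i}a_i^2\dbar\Gamma(\lambda_i t_{\eta_i})\eta_i$, so that $\tilde v = v_k$, $\tilde w = w_k$. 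Note that the correction to $w$ at stage $i$ uses $\nabla\langle v,E_i\rangle$ with the \emph{original} $v$, not $v_{i-1}$, so this is not literally $k$ verbatim applications of Lemma \ref{lem_step}; instead I would recompute $\nabla\tilde v$ and $\sym\nabla\tilde w$ directly, mirroring the computation in the proof of Lemma \ref{lem_step}.

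The key computation: differentiating, $\nabla\tilde v = \nabla v + \sum_{i=1}^k\big(\frac{1}{\lambda_i}\Gamma(\lambda_i t_{\eta_i})E_i\otimes\nabla a_i + a_i\Gamma'(\lambda_i t_{\eta_i})E_i\otimes\eta_i\big)$. Forming $\frac{1}{2}(\nabla\tilde v)^T\nabla\tilde v$, the diagonal terms $i=j$ reproduce exactly the single-step expression from Lemma \ref{lem_step}, summed over $i$. The off-diagonal terms $i\neq j$ all carry a factor $\langle E_i,E_j\rangle=0$ because the perturbation directions form an orthonormal basis; this is the one place orthonormality of $\{E_i\}$ is used, and it is the crux of why the errors accumulate linearly rather than picking up interaction terms. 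Similarly $\sym\nabla\tilde w - \sym\nabla w$ expands as the sum over $i$ of the single-step $w$-contributions plus cross terms; the cross terms involving $\sym(\eta_i\otimes\nabla\langle v_{j-1}-v,E_j\rangle)$-type corrections must be tracked, but because each $w$-correction at stage $i$ is built from $\nabla\langle v,E_i\rangle$ (original $v$) and $\nabla a_i$, $a_i$, $\eta_i$ only, the bookkeeping is identical to the single-step case term by term, with no new coupling. Summing the two expansions and invoking the trigonometric identities \eqref{trip_c} exactly as in Lemma \ref{lem_step}'s proof collapses the $\eta_i\otimes\eta_i$ coefficients to $a_i^2$ and the $\sym(\nabla a_i\otimes\eta_i)$ coefficients to $0$, leaving precisely the claimed right-hand side.

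The main obstacle — really the only subtlety — is making sure that no products of the form (perturbation of $v$ in direction $E_i$) $\times$ (perturbation of $w$ built from $\nabla\langle v,E_j\rangle$) or (perturbation $a_i$-term) $\times$ (perturbation $a_j$-term) with $i\neq j$ survive. For the $(\nabla\tilde v)^T\nabla\tilde v$ piece this is immediate from $\langle E_i,E_j\rangle=0$. For $\sym\nabla\tilde w$ there are no products at all (it is linear in the corrections), so the only thing to verify is that the stage-$i$ $w$-correction, when differentiated, produces exactly the terms appearing in the displayed expansion in the proof of Lemma \ref{lem_step} with $E$ replaced by $E_i$ — which it does, since the structure of \eqref{defi_per} is preserved index by index. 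Hence I would simply state that ``the same calculation as in Lemma \ref{lem_step}, carried out for each index $i$ separately and using $\langle E_i, E_j\rangle = \delta_{ij}$ to discard the off-diagonal contributions to $\frac{1}{2}(\nabla\tilde v)^T\nabla\tilde v$, yields'' the claimed identity, and leave the routine algebra to the reader, exactly as the excerpt's phrasing (``The same calculations as above, lead to'') already anticipates.
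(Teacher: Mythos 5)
Your proposal is correct and follows essentially the same route as the paper, which gives no written proof beyond asserting that ``the same calculations as above'' yield the identity: you correctly isolate the only point requiring verification, namely that the off-diagonal products $P_i^TP_j$ in $\frac12(\nabla\tilde v)^T\nabla\tilde v$ carry the factor $\langle E_i,E_j\rangle=0$ while the $w$-corrections are linear and hence generate no coupling, so the single-step cancellations of Lemma \ref{lem_step} apply index by index. (One could even avoid the direct recomputation by noting that $\langle v_{i-1},E_i\rangle=\langle v,E_i\rangle$ by the same orthogonality, so the iteration is literally $k$ applications of the lemma, but your version is equally valid.)
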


\bigskip

\noindent The second basic ``step'' construction, exhibited below and parallel to that in
Lemma \ref{lem_step}, utilizes Nash's spirals. Observe that another choice
of periodic functions satisfying (\ref{trip_c}), is:
$$\Gamma_2(t) = 2\cos t,\quad \bar\Gamma_2(t) = \frac{1}{2}\cos (2t), \quad
\dbar\Gamma_2(t) = \frac{1}{2}\sin (2t).$$ 
The above triple is conjugate to the triple in  (\ref{tripl_def}), in
the sense that superposing the two perturbations they respectively induce, in some two orthonormal 
directions $E_1$ and $E_2$, in: $\tilde v(x) = v(x) +
\frac{1}{\lambda}a(x)\Gamma(\lambda t_x)E_1 +
\frac{1}{\lambda}a(x)\Gamma_2(\lambda t_x)E_2$, together with the matching
adjustments in $\tilde w$, results in the cancellation of the error term
$\frac{1}{\lambda^2}a\bar\Gamma(\lambda t_\eta)\nabla^2a$ in the right
hand side of (\ref{step_err}). This is precisely the
reason why the Newton iteration scheme in the proof of Theorem \ref{thm_stageK} via
K\"allen's approach (more precisely: the
validity of the inductive estimate \ref{Fbound3K} in section \ref{sec3.5}), can be carried out. On the other
hand, we emphasize that this construction requires a pair of
codimensions to cancel each single rank-one defect of the form
$a\eta\otimes \eta$. 

\begin{lemma}\label{lem_stepK}
Let $\omega\subset\R^d$ be an open domain and let $k\geq 2$. 
Given $v\in \mathcal{C}^2(\omega, \R^{k})$, $w\in
\mathcal{C}^1(\omega, \R^{d})$, $a\in \mathcal{C}^2(\omega,\R)$,
$\eta\in\mathbb{S}^{d-1}$, $\lambda>0$
and two orthogonal unit vectors $E_1, E_2\in\R^k$, 
denote:
$$G(t) = \sin t, \qquad \bar G(t) = \cos t,$$ 
and denoting further $t_\eta = \langle x,\eta\rangle$, define:
\begin{equation}\label{defi_perK}
\begin{split}
&\tilde v(x) = v(x) + \frac{1}{\lambda} a(x)
\Big(G(\lambda t_{\eta})E_1 +\bar G(\lambda t_{\eta})E_2\Big)  \\
& \tilde w(x) = w(x) -\frac{1}{\lambda} a(x)\Big(G(\lambda
t_{\eta})\nabla \langle v(x), E_1\rangle +
\bar G(\lambda t_{\eta})\nabla \langle v(x), E_2\rangle\Big).
\end{split}
\end{equation}
Then, there holds:
\begin{equation}\label{step_errK}
\begin{split}
& \big(\frac{1}{2}(\nabla \tilde v)^T \nabla \tilde v + \sym\nabla \tilde w\big) - 
\big(\frac{1}{2}(\nabla v)^T \nabla v + \sym\nabla w\big) - \frac{1}{2} a^2\eta\otimes\eta
\\ & = -\frac{a}{\lambda} \Big(G(\lambda t_{\eta})\nabla^2 \langle v, E_1\rangle +
\bar G(\lambda t_{\eta})\nabla^2 \langle v, E_2\rangle\Big) +
\frac{1}{2\lambda^2} \nabla a\otimes \nabla a.
\end{split}
\end{equation}
\end{lemma}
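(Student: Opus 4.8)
The plan is to prove Lemma~\ref{lem_stepK} by a direct computation of the gradient of $\tilde v$ and the symmetrized gradient of $\tilde w$, exactly parallel to the proof of Lemma~\ref{lem_step}, but now tracking the interference between the two codimensional components carried by the orthonormal pair $E_1, E_2$. First I would differentiate the definition \eqref{defi_perK} of $\tilde v$, obtaining
$$\nabla\tilde v = \nabla v + \frac{1}{\lambda}\big(G(\lambda t_\eta)E_1 + \bar G(\lambda t_\eta)E_2\big)\otimes\nabla a + a\big(G'(\lambda t_\eta)E_1 + \bar G'(\lambda t_\eta)E_2\big)\otimes\eta,$$
and then expand $\frac{1}{2}(\nabla\tilde v)^T\nabla\tilde v - \frac{1}{2}(\nabla v)^T\nabla v$. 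Using $\langle E_1,E_2\rangle = 0$ and $|E_1| = |E_2| = 1$, the pure $\eta\otimes\eta$ term collects a coefficient $\frac{1}{2}a^2\big(G'(\lambda t_\eta)^2 + \bar G'(\lambda t_\eta)^2\big)$, which equals $\frac{1}{2}a^2$ since $G' = \cos$, $\bar G' = -\sin$. This is the mechanism by which a pair of codimensions cancels a single defect $\frac{1}{2}a^2\eta\otimes\eta$ with no surviving $\nabla^2 a$ contribution, and it is the key structural point to get right.

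Next I would differentiate $\tilde w$ and compute $\sym\nabla\tilde w - \sym\nabla w$. Writing $\tilde w = w - \frac{1}{\lambda}a\big(G(\lambda t_\eta)\nabla\langle v,E_1\rangle + \bar G(\lambda t_\eta)\nabla\langle v,E_2\rangle\big)$, the terms produced are: a factor $-a\big(G'\nabla\langle v,E_1\rangle + \bar G'\nabla\langle v,E_2\rangle\big)\otimes\eta$ (after symmetrization), a factor $-\frac{1}{\lambda}\big(G\,\nabla a\otimes\nabla\langle v,E_1\rangle + \bar G\,\nabla a\otimes\nabla\langle v,E_2\rangle\big)$ (after symmetrization), and the term $-\frac{a}{\lambda}\big(G\,\nabla^2\langle v,E_1\rangle + \bar G\,\nabla^2\langle v,E_2\rangle\big)$. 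The crucial observation is that, because of the \emph{same} orthonormal directions $E_1, E_2$, the cross terms between the $\nabla v$ part of $\nabla\tilde v$ and the spiral part combine with the first two terms just listed: indeed $\sym\big((\eta\otimes E_i)\nabla v\big) = \sym\big(\eta\otimes\nabla\langle v,E_i\rangle\big)$, so these cancel in pairs across the two identities, exactly as the analogous cancellation in Lemma~\ref{lem_step} (where it removed the $\nabla\langle v,E\rangle$ cross terms). After this cancellation, what remains of the $\eta\otimes\eta$ coefficient is the identity $\frac{1}{2}\big((G')^2 + (\bar G')^2\big) = \frac{1}{2}$, i.e. $\frac{1}{2}(\cos^2 + \sin^2) = \frac{1}{2}$, which is the spiral analogue of the relation $\frac{1}{2}(\Gamma')^2 + \dbar\Gamma' = 1$ in \eqref{trip_c}.

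Summing the two expansions, the surviving terms are precisely $\frac{1}{2}a^2\eta\otimes\eta$ (moved to the left), the Hessian term $-\frac{a}{\lambda}\big(G(\lambda t_\eta)\nabla^2\langle v,E_1\rangle + \bar G(\lambda t_\eta)\nabla^2\langle v,E_2\rangle\big)$, and the quadratic term $\frac{1}{2\lambda^2}\big(G(\lambda t_\eta)^2 + \bar G(\lambda t_\eta)^2\big)\nabla a\otimes\nabla a = \frac{1}{2\lambda^2}\nabla a\otimes\nabla a$, using $G^2 + \bar G^2 = \sin^2 + \cos^2 = 1$. This yields exactly \eqref{step_errK}. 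The main obstacle is purely bookkeeping: one must carefully verify that \emph{all} of the mixed terms involving $\nabla\langle v,E_i\rangle$ — both the $a\,(\,\cdot\,)\otimes\eta$ type and the $\frac{1}{\lambda}\nabla a\otimes(\,\cdot\,)$ type — cancel exactly between the $\frac{1}{2}(\nabla\tilde v)^T\nabla\tilde v$ contribution and the $\sym\nabla\tilde w$ contribution, and that no residual $\nabla^2 a$ or cross-frequency ($G\bar G$, $G'\bar G$) terms survive; the cancellation of the latter relies on orthogonality $\langle E_1, E_2\rangle = 0$ together with $G'\bar G' + G\bar G$ appearing only in the skew part, hence vanishing under $\sym$. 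Once the cancellations are confirmed, the three trigonometric identities $G'^2 + \bar G'^2 = 1$, $G^2 + \bar G^2 = 1$, and the pairing $\sym\big(\eta\otimes E_i\cdot\nabla v\big) = \sym\big(\eta\otimes\nabla\langle v,E_i\rangle\big)$ close the argument.
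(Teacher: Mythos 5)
Your proposal is correct and follows essentially the same direct computation as the paper's proof: expand $\nabla\tilde v$, use the orthonormality of $E_1,E_2$ together with $G'^2+\bar G'^2=G^2+\bar G^2=1$, and observe that the $\sym\big(\nabla\langle v,E_i\rangle\otimes\eta\big)$ and $\sym\big(\nabla\langle v,E_i\rangle\otimes\nabla a\big)$ cross terms cancel pairwise against those produced by $\sym\nabla\tilde w$. One small correction to your bookkeeping: the mixed term carrying $\sym(\nabla a\otimes\eta)$ comes with the coefficient $GG'+\bar G\,\bar G'=\sin\cos-\cos\sin$, which vanishes identically as a trigonometric identity (after orthogonality has already removed the $G\bar G'$ and $\bar G G'$ contributions), not because it lies in the skew-symmetric part.
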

\begin{proof}
By a direct calculation, it follows that:
$$\nabla \tilde v=\nabla v +\frac{1}{\lambda} 
\Big(G(\lambda t_{\eta})E_1+\bar G(\lambda t_{\eta}) E_2\Big) \otimes \nabla a + 
a\Big(G'(\lambda t_{\eta}) E_1 +\bar G'(\lambda  t_{\eta}) E_2\Big) \otimes \eta,$$
which implies:
\begin{equation*}
\begin{split}
& \frac{1}{2}(\nabla \tilde v)^T \nabla \tilde v - \frac{1}{2}(\nabla v)^T \nabla v
-   \frac{1}{2} a^2\eta\otimes\eta \\  & =  
a \Big(G'(\lambda t_{\eta})\sym(\nabla \langle v, E_1\rangle
\otimes \eta) +\bar G'(\lambda t_{\eta}) \sym(\nabla \langle v, E_2\rangle\otimes \eta)\Big)
\\ & \quad + \frac{1}{\lambda} \Big(G(\lambda t_{\eta})
\sym(\nabla \langle v, E_1\rangle\otimes \nabla a) + \bar G(\lambda t_{\eta})
\sym(\nabla \langle v, E_2\rangle\otimes \nabla a)\Big) 
+ \frac{1}{2\lambda^2} \nabla a\otimes\nabla a.
\end{split}
\end{equation*}
Similarly:
\begin{equation*}
\begin{split}
\sym\nabla \tilde w - \sym\nabla w
= ~ & - a \Big(G'(\lambda t_{\eta})\sym(\nabla \langle v, E_1\rangle
\otimes \eta) +\bar G'(\lambda t_{\eta}) \sym(\nabla \langle v, E_2\rangle \otimes \eta)\Big)
\\ & - \frac{1}{\lambda} \Big(G(\lambda t_{\eta})
\sym(\nabla \langle v, E_1\rangle\otimes \nabla a) + \bar G(\lambda t_{\eta})
\sym(\nabla \langle v, E_2\rangle \otimes \nabla a)\Big) 
\\ & -\frac{1}{\lambda} a \Big(G(\lambda t_{\eta})\nabla^2 \langle v, E_1\rangle +
\bar\Gamma(\lambda t_{\eta})\nabla^2 \langle v, E_2\rangle\Big).
\end{split}
\end{equation*}
Summing the above two identities we arrive at (\ref{step_errK}). The proof is done.
\end{proof}

\smallskip

\noindent Similarly as in Corollary \ref{cor_indep}, we note that:

\begin{corollary}\label{cor_indepK}
Let $\omega\subset\R^d$ be an open domain and let $k\geq 2d_*$.
Given $v\in \mathcal{C}^2(\omega, \R^{k})$, $w\in
\mathcal{C}^1(\omega, \R^{d})$,  $\{a_i\in
\mathcal{C}^2(\omega,\R)\}_{i=1}^{d_*}$, the unit vectors
$\{\eta_i\in\R^d\}_{i=1}^{d_*}$ and the frequency $\lambda>0$, set:
\begin{equation*}
\begin{split}
&\tilde v(x) = v(x) + \frac{1}{\lambda} \sum_{i=1}^{d_*} a_i(x)
\Big(G(\lambda t_{\eta_i})e_i +\bar G(\lambda t_{\eta_i})e_{d_*+i}\Big)  \\
& \tilde w(x) = w(x) -\frac{1}{\lambda} \sum_{i=1}^{d_*}
a_i(x)\Big(G(\lambda t_{\eta_i})\nabla v^i(x) +
\bar G(\lambda t_{\eta_i})\nabla v^{d_*+i} (x)\Big),
\end{split}
\end{equation*}
where the functions $G, \bar G$ and $t_\eta$ are defined in Lemma \ref{lem_stepK}.
Then, we have:
\begin{equation*}
\begin{split}
& \big(\frac{1}{2}(\nabla \tilde v)^T \nabla \tilde v + \sym\nabla \tilde w\big) - 
\big(\frac{1}{2}(\nabla v)^T \nabla v + \sym\nabla w\big) -
\frac{1}{2}\sum_{i=1}^{d_*} a_i^2\eta_i\otimes\eta_i 
\\ & = -\frac{1}{\lambda} \sum_{i=1}^{d_*}a_i \Big(G(\lambda t_{\eta_i})\nabla^2 v^i +
\bar G(\lambda t_{\eta_i})\nabla^2 v^{d_*+i}\Big) +
\frac{1}{2\lambda^2} \sum_{i=1}^{d_*} \nabla a_i\otimes \nabla a_i.
\end{split}
\end{equation*}
\end{corollary}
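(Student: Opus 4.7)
The plan is to mirror the proof of Lemma \ref{lem_stepK} while exploiting orthogonality of the frame $\{e_i, e_{d_*+i}\}_{i=1}^{d_*}$ in $\R^k$ (available precisely because $k\geq 2d_*$) to decouple the contributions of distinct indices $i\neq j$.

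First I would compute $\nabla \tilde v$; it splits as $\nabla v$ plus a sum over $i\in\{1,\dots,d_*\}$ of rank-two contributions built from $\eta_i$, $\nabla a_i$ and the pair $\{e_i,e_{d_*+i}\}$. Expanding $(\nabla\tilde v)^T\nabla\tilde v$ produces, besides $(\nabla v)^T\nabla v$, a double sum whose summands carry inner products of the form $\langle e_i,e_j\rangle$, $\langle e_i,e_{d_*+j}\rangle$, $\langle e_{d_*+i},e_{d_*+j}\rangle$. Since the chosen $2d_*$ codimensions form an orthonormal set, every off-diagonal ($i\neq j$) contribution vanishes; within each pair the orthogonality $\langle e_i,e_{d_*+i}\rangle=0$ eliminates the mixed $G\bar G$- and $G'\bar G'$-type cross products. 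What survives is a sum over $i$ of exactly the single-index expression from Lemma \ref{lem_stepK}, in which the identity $G'^2+\bar G'^2=1$ reproduces the rank-one defect $\tfrac{1}{2}a_i^2\eta_i\otimes\eta_i$. Similarly, since the $i$-th summand defining $\tilde w$ involves only the components $v^i$ and $v^{d_*+i}$, no cross-index interaction arises in $\sym\nabla\tilde w-\sym\nabla w$, and this difference is literally the sum over $i$ of the corresponding single-index contribution in Lemma \ref{lem_stepK}.

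Adding the two expansions, the $a_iG'(\lambda t_{\eta_i})\sym(\nabla v^i\otimes\eta_i)$-type terms (and their $\bar G'$ counterparts) cancel within each index, and the remainder is precisely the $i$-sum of the right-hand side of (\ref{step_errK}). The argument is thus essentially bookkeeping and no new analytic input is required beyond Lemma \ref{lem_stepK}; the main (and only) point is that the hypothesis $k\geq 2d_*$ furnishes an orthonormal family of $2d_*$ codimensions, which is exactly what causes every cross-index term to vanish and renders the $d_*$ simultaneous perturbations additive.
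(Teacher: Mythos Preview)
Your proposal is correct and matches the paper's implicit approach: the paper does not spell out a proof but indicates (``Similarly as in Corollary \ref{cor_indep}'') that the result follows from the single-index computation in Lemma \ref{lem_stepK} together with the observation that the orthonormality of the $2d_*$ codimension vectors $\{e_i,e_{d_*+i}\}_{i=1}^{d_*}$ kills all cross-index terms in $(\nabla\tilde v)^T\nabla\tilde v$, while the $\tilde w$-correction is already a decoupled sum. Your write-up makes this bookkeeping explicit and identifies the only substantive point, namely that $k\geq 2d_*$ is what guarantees the required orthonormal family.
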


\bigskip

\noindent We now recall two auxiliary results from \cite{CDS}.
The first one gathers the convolution and commutator estimates \cite[Lemma 2.1]{CDS}:

\begin{lemma}\label{lem_stima}
Let $\phi\in\mathcal{C}_c^\infty(\R^d,\mathbb{R})$ be a standard
mollifier that is nonnegative, radially symmetric, supported on the
unit ball $B(0,1)\subset\R^d$ and such that $\int_{\mathbb{R}^d} \phi \dx = 1$. Denote: 
$$\phi_l (x) = \frac{1}{l^d}\phi(\frac{x}{l})\quad\mbox{ for all
}\; l\in (0,1], \;  x\in\R^d.$$
Then, for every $f,g\in\mathcal{C}^0(\mathbb{R}^d,\R)$ and every
$m,n\geq 0$ and $\beta\in (0,1]$ there holds:
\begin{align*}
& \|\nabla^{(m)}(f\ast\phi_l)\|_{0} \leq
\frac{C}{l^m}\|f\|_0,\tag*{(\theequation)$_1$}\vspace{1mm} \refstepcounter{equation} \label{stima1}\\
& \|f - f\ast\phi_l\|_0\leq C \min\big\{l^2\|\nabla^{2}f\|_0,
l\|\nabla f\|_0, {l^\beta}\|f\|_{0,\beta}\big\},\tag*{(\theequation)$_2$} \vspace{1mm} \label{stima2}\\
& \|\nabla^{(m)}\big((fg)\ast\phi_l - (f\ast\phi_l)
(g\ast\phi_l)\big)\|_0\leq {C}{l^{2- m}}\|\nabla f\|_{0}
\|\nabla g\|_{0}, \tag*{(\theequation)$_3$} \label{stima4}
\end{align*}
with a constant $C>0$ depending only on the differentiability exponent $m$.
\end{lemma}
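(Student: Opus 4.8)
The plan is to establish the three bounds \ref{stima1}, \ref{stima2}, \ref{stima4} one at a time, each by an elementary manipulation of the convolution integral, following \cite[Lemma 2.1]{CDS}. For \ref{stima1} I would differentiate under the integral sign, $\nabla^{(m)}(f\ast\phi_l) = f\ast\nabla^{(m)}\phi_l$, use the scaling $\nabla^{(m)}\phi_l(x) = l^{-d-m}(\nabla^{(m)}\phi)(x/l)$ --- so that $\|\nabla^{(m)}\phi_l\|_{L^1(\R^d)} = l^{-m}\|\nabla^{(m)}\phi\|_{L^1(\R^d)}$ --- and conclude by Young's convolution inequality, with $C=\|\nabla^{(m)}\phi\|_{L^1}$ depending only on $m$. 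For \ref{stima2}, since $\int\phi_l = 1$ I write $f(x)-(f\ast\phi_l)(x) = \int_{B(0,l)}\phi_l(y)\big(f(x)-f(x-y)\big)\,dy$; the bounds $l\|\nabla f\|_0$ and $l^\beta\|f\|_{0,\beta}$ are immediate from $|f(x)-f(x-y)|\leq|y|\,\|\nabla f\|_0$ and $|f(x)-f(x-y)|\leq|y|^\beta\|f\|_{0,\beta}$ on the support $|y|\leq l$. The sharpest bound $l^2\|\nabla^2 f\|_0$ uses the radial symmetry of $\phi$, which forces the vanishing of the first moment $\int\phi_l(y)\,y\,dy = 0$; subtracting $\nabla f(x)\cdot y$ inside the integral and applying Taylor's theorem with Lagrange remainder bounds the integrand by $\tfrac12|y|^2\|\nabla^2 f\|_0\leq\tfrac12 l^2\|\nabla^2 f\|_0$.

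For the commutator estimate \ref{stima4} I would first record the Constantin--E--Titi identity: after the change of variables $y\mapsto x-y$, $z\mapsto x-z$,
$$(fg)\ast\phi_l(x) - (f\ast\phi_l)(x)\,(g\ast\phi_l)(x) = \frac12\iint\phi_l(x-y)\,\phi_l(x-z)\,\big(f(y)-f(z)\big)\big(g(y)-g(z)\big)\,dy\,dz,$$
verified by expanding $(f(y)-f(z))(g(y)-g(z))$ into four products and integrating each against $\phi_l(x-y)\phi_l(x-z)$ using $\int\phi_l = 1$. On the support of the integrand $|x-y|\leq l$ and $|x-z|\leq l$, hence $|y-z|\leq 2l$, so $|f(y)-f(z)|\leq 2l\|\nabla f\|_0$ and $|g(y)-g(z)|\leq 2l\|\nabla g\|_0$. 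Then $\nabla_x^{(m)}$ falls only on the two mollifier factors; by the Leibniz rule it yields a sum over $j=0,\dots,m$ of terms $\nabla^{(j)}\phi_l(x-y)\otimes\nabla^{(m-j)}\phi_l(x-z)$, whose product of $L^1$ masses in $y$ and $z$ is $\leq Cl^{-m}$ by the same scaling as in \ref{stima1}. Multiplying the $O(l^2\|\nabla f\|_0\|\nabla g\|_0)$ pointwise bound on the difference factors by this gives $\|\nabla^{(m)}\big((fg)\ast\phi_l-(f\ast\phi_l)(g\ast\phi_l)\big)\|_0\leq Cl^{2-m}\|\nabla f\|_0\|\nabla g\|_0$.

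The only genuinely delicate points are: the vanishing first moment in \ref{stima2}, without which one recovers only the weaker estimate $l\|\nabla f\|_0$; and, in \ref{stima4}, the discipline of keeping the ``difference'' structure intact and letting all $m$ derivatives act on the mollifiers rather than on $f$ or $g$ --- only the first derivatives of which are controlled --- which is precisely what the commutator identity accomplishes. The remaining steps are routine applications of Young's inequality and Taylor's theorem.
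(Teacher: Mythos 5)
Your proof is correct. The paper does not prove this lemma at all --- it is recalled verbatim from \cite[Lemma 2.1]{CDS} --- and your argument is the standard one given there: Young's inequality with the scaling $\|\nabla^{(m)}\phi_l\|_{L^1}=l^{-m}\|\nabla^{(m)}\phi\|_{L^1}$ for the first bound, the vanishing first moment of the symmetric mollifier plus Taylor expansion for the second, and a commutator identity keeping the difference structure intact for the third. The only cosmetic difference is that \cite{CDS} writes the commutator as $\int\big(f(x-y)-f(x)\big)\big(g(x-y)-g(x)\big)\phi_l(y)\,dy-\big(f-f\ast\phi_l\big)\big(g-g\ast\phi_l\big)$ rather than your symmetrized double integral; the two identities are equivalent and yield the same $l^{2-m}$ bound by the same counting of derivatives on the mollifier factors.
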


\medskip

\noindent The next result states the decomposition of symmetric positive definite matrices
which are close to $\Id_d$, into ``primitive matrices'', as proved in \cite[Lemma 5.2]{CDS}:

\begin{lemma}\label{lem_dec_def}
Given the dimension $d\geq 1$, let $d_*$ be the dimension of the space $\R^{d\times d}_\sym$, namely:
$$d_* = \frac{d(d+1)}{2}.$$
There exist: a constant $r_0>0$, the linear maps $\{\bar a_i:\R^{d\times
  d}_{\sym}\to\R\}_{i=1}^{d_*}$, and the unit vectors $\{\eta_i\in\R^{d}\}_{i=1}^{d_*}$, such that
for all $A\in B(\Id_d,r_0)\subset \R^{d\times d}_{\sym}$, there holds:
$$A = \sum_{i=1}^{d_*} \bar a_i(A) \eta_i\otimes\eta_i \quad \mbox{ and } \quad
\bar a_i (A)\geq r_0 \; \mbox{ for all }\;  i =1\ldots d_*.$$
\end{lemma}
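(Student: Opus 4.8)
The plan is to reduce Lemma~\ref{lem_dec_def} to a single statement in linear algebra and then to realize the required vectors as a suitable tight frame.

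\emph{Reduction.} It suffices to produce $d_*$ unit vectors $\eta_1,\dots,\eta_{d_*}\in\R^d$ such that $\{\eta_i\otimes\eta_i\}_{i=1}^{d_*}$ is a basis of $\R^{d\times d}_\sym$ and
\[
\Id_d=\sum_{i=1}^{d_*}c_i\,\eta_i\otimes\eta_i \qquad\text{with }\; c_i>0 \;\text{ for all }i,
\]
that is, $\Id_d$ lies in the interior of the simplicial cone generated by the $\eta_i\otimes\eta_i$. Indeed, because these matrices form a basis, the decomposition $A=\sum_i\bar a_i(A)\,\eta_i\otimes\eta_i$ determines the linear maps $\bar a_i:\R^{d\times d}_\sym\to\R$ uniquely; they are bounded, say $|\bar a_i(A)-\bar a_i(B)|\le L\|A-B\|$, and $\bar a_i(\Id_d)=c_i\ge c_*:=\min_j c_j>0$. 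Setting $r_0:=c_*/\big(2(L+1)\big)$, for every $A$ with $\|A-\Id_d\|\le r_0$ we get $\bar a_i(A)\ge c_*-Lr_0\ge c_*/2\ge r_0$, which is exactly the asserted conclusion.

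\emph{Construction of the $\eta_i$.} I would choose $\{\eta_i\}_{i=1}^{d_*}$ to be a finite unit-norm tight frame of $\R^d$, i.e.\ a family of $d_*$ unit vectors with $\sum_{i=1}^{d_*}\eta_i\otimes\eta_i=\frac{d_*}{d}\Id_d$; such frames exist for every cardinality $\ge d$, and they should be chosen so that, in addition, the $d_*$ matrices $\eta_i\otimes\eta_i$ are linearly independent, hence a basis of $\R^{d\times d}_\sym$. For such a frame the displayed identity holds with the constant value $c_i=d/d_*>0$. Explicitly, for $d=2$ one takes three unit vectors at mutual angles $2\pi/3$, and for $d=3$ the six vectors $\tfrac1{\sqrt2}(e_p\pm e_q)$ with $1\le p<q\le 3$; in both cases a direct computation verifies the tight-frame identity ($\sum\eta_i\otimes\eta_i=\tfrac32\Id_2$, resp.\ $2\,\Id_3$) and the linear independence of the dyads. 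For general $d$ one argues that a frame in general position within the (positive-dimensional) variety of unit-norm tight frames of $d_*$ vectors has linearly independent dyads, since linear dependence is the vanishing of a single non-trivial polynomial there; alternatively one starts from the dyad basis $\{e_j\}_{j=1}^d\cup\{\tfrac1{\sqrt2}(e_p+e_q)\}_{p<q}$, for which $\Id_d$ has coordinates $1$ on the $d$ axis dyads and $0$ on the remaining $\binom d2$, and perturbs it so that the vanishing coordinates become strictly positive while linear independence is preserved.

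\emph{Main obstacle.} The one genuinely non-routine point is that, for the exact count $d_*=\dim\R^{d\times d}_\sym$, the three requirements on the $\eta_i$ — unit length, linear independence of the dyads, and $\Id_d$ in the \emph{open} positive cone — can be met simultaneously. The constraint $\Id_d=\sum c_i\,\eta_i\otimes\eta_i$ with all $c_i>0$ forces the off-diagonal parts of the dyads to cancel, so the $\eta_i$ cannot lie in coordinate subspaces, whereas linear independence is needed for the maps $\bar a_i$ to be well defined on all of $\R^{d\times d}_\sym$. Compatibility rests on the non-degeneracy of the variety of rank-one symmetric matrices (it spans $\R^{d\times d}_\sym$), which allows one to pick $d_*$ of its points in linearly general position that still positively surround $\Id_d$; this is checked by hand in low dimensions and by the genericity/perturbation argument above in general. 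The remaining ingredients — boundedness of the $\bar a_i$ and the choice of $r_0$ — are elementary.
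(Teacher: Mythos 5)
The paper does not actually prove this lemma; it cites \cite[Lemma 5.2]{CDS}, so there is no internal proof to compare against. Your reduction is exactly right: once you have $d_*$ unit vectors whose dyads $\eta_i\otimes\eta_i$ form a basis of $\R^{d\times d}_\sym$ \emph{and} $\Id_d=\sum_i c_i\,\eta_i\otimes\eta_i$ with all $c_i>0$, then the coordinate functionals $\bar a_i$ are bounded linear maps, and the choice $r_0 = c_*/(2(L+1))$ with $c_*=\min_i c_i$ gives $\bar a_i(A)\ge c_*/2\ge r_0$ on $B(\Id_d,r_0)$. Your explicit frames for $d=2$ (Mercedes-Benz) and $d=3$ (the six vectors $\frac{1}{\sqrt2}(e_p\pm e_q)$) are correct and check out by hand.

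The general-$d$ construction, however, is left at the level of a sketch with two unfinished arguments. The genericity route requires both a witness (at least one unit-norm tight frame of $d_*$ vectors with linearly independent dyads, so the polynomial in question is nontrivial) \emph{and} connectivity or irreducibility of the variety of such frames so that a polynomial vanishing on a proper subset does not vanish identically on a component; you establish neither for arbitrary $d$. The perturbation route starting from $\{e_j\}\cup\{\frac{1}{\sqrt2}(e_p+e_q)\}$ requires showing the $\binom{d}{2}$ vanishing coordinates of $\Id_d$ can be made strictly \emph{positive}, not merely nonzero, under a suitable perturbation — this has a sign to control and is not automatic. A clean fix you almost have in hand bypasses both: take the dyad basis $\{f_i\}_{i=1}^{d_*} = \{e_j\}_{j=1}^d\cup\{\frac{1}{\sqrt2}(e_p+e_q)\}_{p<q}$, set $F=\sum_i f_i\otimes f_i$ (positive definite since the $f_i$ span $\R^d$), and define $\eta_i = F^{-1/2}f_i / |F^{-1/2}f_i|$. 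Then
\[
\Id_d = F^{-1/2}F F^{-1/2} = \sum_{i=1}^{d_*} |F^{-1/2}f_i|^2\,\eta_i\otimes\eta_i,
\]
with strictly positive coefficients, and the dyads $\eta_i\otimes\eta_i$ remain linearly independent because $M\mapsto F^{-1/2}MF^{-1/2}$ is a linear automorphism of $\R^{d\times d}_\sym$ and positive scaling does not affect independence. Inserting this one step makes your argument complete in every dimension.
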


\section{The ``stage'' for the $\mathcal{C}^{1,\alpha}$
  approximations: a proof of Theorem \ref{thm_stage}}\label{sec_stage}

The following construction is the main technical contribution of this paper:

\medskip

\noindent {\bf Proof of Theorem \ref{thm_stage}}

The proof consists of several steps in an inductive construction below.

\smallskip

{\bf 1. (Preparing the data)} Recall that $v, w, A$ are restrictions
to $\bar\omega$ of some $v, w, A$ defined on and, without loss of
generality, compactly supported in $\R^d$. We set the mollification scale:
\begin{equation}\label{defi_l} 
l=\frac{\|\mathcal{D}\|_0^{1/2}}{M}\in (0, 1],
\end{equation}
and taking $\phi_l (x) = \frac{1}{l^d}\phi(x/l)$ as in Lemma \ref{lem_stima}, we define:
$$v_0=v\ast \phi_l,\quad w_0=w\ast \phi_l, \quad A_0=A\ast \phi_l,
\quad {\mathcal{D}}_0= A_0-\big(\frac{1}{2}(\nabla v_0)^T\nabla v_0 + \sym\nabla w_0\big).$$
From the estimates in Lemma \ref{lem_stima},  one deduces the initial bounds:
\begin{align*}
& \|v_0-v\|_1 + \|w_0-w\|_1 \leq C\|\mathcal{D}\|_{0}^{1/2},
\tag*{(\theequation)$_1$}\refstepcounter{equation} \label{pr_stima1}\\
& \|A_0-A\|_0 \leq Cl^\beta\|A\|_{0,\beta}, \tag*{(\theequation)$_2$} \label{pr_stima2}\\
& \|\nabla^{(m+1)}v_0\|_0 + \|\nabla^{(m+1)}w_0\|_0\leq
\frac{C}{l^m}\|\mathcal{D}\|_{0}^{1/2}\quad \mbox{ for all }\; m\geq
1, \tag*{(\theequation)$_3$} \label{pr_stima3}\\
& \|\nabla^{(m)} \mathcal{D}_0\|_0\leq
\frac{C}{l^m}\|\mathcal{D}\|_{0} \quad \mbox{ for all }\; m\geq 0. \tag*{(\theequation)$_4$}\label{pr_stima4}
\end{align*}
Indeed,  \ref{pr_stima2} follows directly from \ref{stima2}, and 
\ref{pr_stima1} similarly follows by applying \ref{stima2} to
$v$, $\nabla v$, $w$, $\nabla w$ and noting that,  in view of (\ref{defi_l}) we have:
\begin{equation}\label{stupido}
l\|v\|_2+ l\|w\|_2 \leq 2\|\mathcal{D}\|_0^{1/2}.
\end{equation}
Further, \ref{pr_stima3} follows by applying \ref{stima1} to
$\nabla^2v$ and $\nabla^2w$ with the differentiability exponent $m-1$ and
again taking into account (\ref{stupido}). To check \ref{pr_stima4}, we write:
$$\mathcal{D}_0 = \mathcal{D}\ast \phi_l -\frac{1}{2}\big((\nabla
v_0)^T\nabla v_0 - ((\nabla v)^T\nabla v)\ast\phi_l\big),$$ 
and apply \ref{stima1} to $\mathcal{D}$, and \ref{stima4} to $(\nabla
v)^T$ and $\nabla v$, where the final bound is due to (\ref{stupido}). 

\medskip

{\bf 2. (Induction definition: frequencies)}  We now inductively define the main
constants, frequencies and corrections in the construction of ($\tilde
v,\tilde w)$ from $(v,w)$. First, we write the least common multiple
of the auxiliary dimension $d_*$ and the codimension $k$, as follows:
\begin{equation}\label{lcm_def}
N=lcm(d_*,k) = Sd_* = Jk, \qquad S,J\geq 1.
\end{equation}
Then, we set the initial perturbation frequencies as:
$$\lambda_0 = \frac{1}{l}, \qquad \lambda_1=\lambda = \frac{\sigma^{1/S}}{l}.$$
For every $i=2\ldots N$ we define $\lambda_i\geq 1$ according to the
mutually exclusive cases in:
\begin{equation*}
\lambda_i = \lambda_{i-1} \cdot\left\{\begin{array}{ll} (\lambda l) & \mbox{ if }\; k\mid (i-1), \\  
(\lambda l)^{1/2} & \mbox{ if }\; d_*\mid (i-1), \\  1 & \mbox{ otherwise. }\\  
\end{array}\right.
\end{equation*}
It follows that for all $j=0\ldots J-1$ and $s=0\ldots S-1$ there holds: 
\begin{equation}\label{count_lam}
\lambda_i l = (\lambda l)^{1+j+s/2 }\quad \mbox{ for all }\; i\in (jk, (j+1)k]\cap (sd_*, (s+1)d_*].
\end{equation}

\medskip

{\bf 3. (Induction definition: decomposition of deficits)}  First, let
$\{\eta_\delta\in\R^d\}_{\delta=1}^{d_*}$ be the unit vectors as in
Lemma \ref{lem_dec_def}. For all $s=0\ldots 
S-1$ we define constants $\tilde C_s$ and perturbation amplitudes vector
$a^s = [a_\delta^s]_{\delta=1}^{d_*}\in\mathcal{C}^\infty(\bar\omega,\R^{d_*})$ by:
\begin{equation*}
\begin{split}
& \tilde C_s = \frac{2}{r_0}\Big(\frac{1}{(\lambda l)^s} \|\mathcal{D}\|_0 + \|\mathcal{D}_s\|_0\Big),\\
& a_\delta^s(x) = \Big(\tilde C_s\bar a_\delta\big(\Id_d + \frac{1}{\tilde C_s}\mathcal{D}_s(x)\big)\Big)^{1/2}
\quad \mbox{ for all }\; \delta=1\ldots d_*, \;\; x\in\bar\omega.
\end{split}
\end{equation*}
Above, $r_0>0$ and the maps ${\bar a}_\delta$ are as in Lemma
\ref{lem_dec_def}, so our definition is correctly posed because
$\Id_d + \frac{1}{\tilde C_s}\mathcal{D}_s (x)\in
B(\Id_d,r_0)\subset\R^{d\times d}_\sym$ for all $x\in \bar\omega$. As
$\tilde C_s \Id_d + \mathcal{D}_s = \tilde C_s \big(\Id_d + 
\frac{1}{\tilde C_s}\mathcal{D}_s \big)$, we get: 
\begin{equation}\label{low_bd_as}
\tilde C_s \Id_d + \mathcal{D}_s = \sum_{\delta=1}^{d_*}
(a_\delta^s)^2\eta_\delta\otimes\eta_\delta \quad\mbox{ and }\quad (a_\delta^s)^2\geq
r_0\tilde C_s \;\mbox{ in } \;\bar\omega,\;\mbox{ for all } \;\delta=1\ldots d_*.
\end{equation}

Since $\{\eta_\delta\otimes\eta_\delta\}_{\delta=1}^{d_*}$ is
a basis of the linear space $\R^{d\times d}_\sym$, we obtain:
\begin{equation}\label{as0}
\|a^s\|_0\leq C\| \tilde C_s \Id_d + \mathcal{D}_s\|_0^{1/2}\leq C \tilde C_s^{1/2}.
\end{equation}
We also right away observe that, by the  Fa\'a di Bruno formula, there
holds, for $m\geq 1$:
\begin{equation*}
\|\nabla^{(m)}a_\delta^s\|_0\leq C \Big\|\sum_{p_1+2p_2+\ldots
  mp_m=m}|a_\delta^s|^{2(1/2-p_1-\ldots -p_m)}\prod_{t=1}^m \big|\nabla^{(t)}|a_\delta^s|^2\big|^{p_t}\Big\|_0
\quad\mbox{ for all }\; \delta=1\ldots d_*.
\end{equation*}
Using the lower bound in (\ref{low_bd_as}) and the linearity of $\bar
a_\delta$ in Lemma \ref{lem_dec_def}, we further get: 
\begin{equation}\label{asm}
\begin{split}
\|\nabla^{(m)}a^s\|_0 & \leq C \sum_{p_1+2p_2+\ldots
  mp_m=m}\frac{1}{\tilde C_s^{(p_1+\ldots+p_m)-1/2}}\prod_{t=1}^m
\big\|\nabla^{(t)}\mathcal{D}_s\big\|_0^{p_t} \\ & \leq {C}{\tilde
  C_s^{1/2}} \sum_{p_1+2p_2+\ldots mp_m=m}\prod_{t=1}^m
\Big(\frac{\|\nabla^{(t)}\mathcal{D}_s\|_0}{\tilde C_s}\Big)^{p_t}.
\end{split}
\end{equation}

In particular, for $s=0$ and any $\delta=1\ldots d_*$, the bounds
(\ref{as0}), (\ref{asm}) and \ref{pr_stima4} yield:
\begin{equation}\label{induC_0} 
\tilde C_0 \leq C\|\mathcal{D}\|_0 \quad\mbox{ and }\quad 
\|\nabla^{(m)}a^0\|_0 \leq
\frac{C}{l^m}\|\mathcal{D}\|_{0}^{1/2}\quad \mbox{ for all }\; m\geq 0.
\end{equation}

\begin{figure}[htbp]
\centering
\includegraphics[scale=0.55]{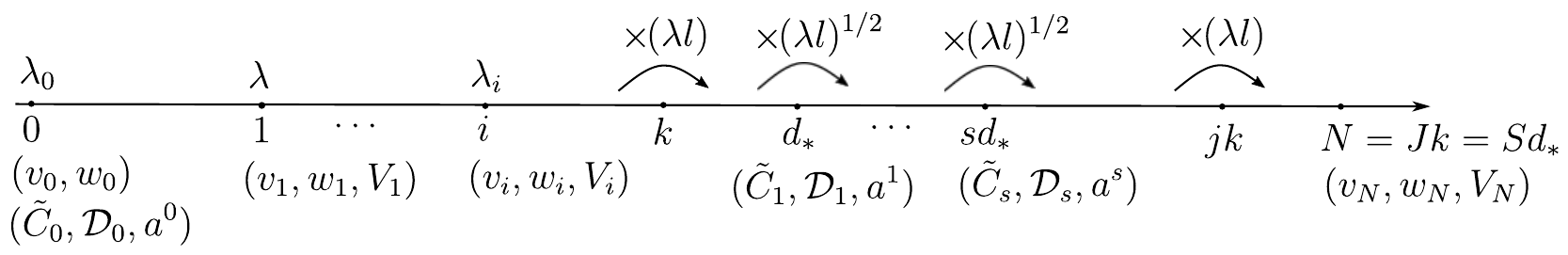}
\caption{{Progression of frequencies $\lambda_i$ and other
    intermediary quantities defined at integers $i=1\ldots N$, where $N= lcm(k, d_*)$.}}
\label{fig1}
\end{figure}

\medskip

{\bf 4. (Induction definition: perturbations)} For each $i=1\ldots N$ we may uniquely write:
\begin{equation}\label{isj}
\begin{split}
i=jk+\gamma = sd_* + \delta \quad \mbox{ with } \quad  & j=0\ldots J-1,
\quad \gamma=1\ldots k, \\ &  s=0\ldots S-1, \quad \delta=1\ldots d_*.
\end{split}
\end{equation}
Define $v_{i}\in\mathcal{C}^\infty(\bar\omega,\R^k)$ and
$w_i\in\mathcal{C}^\infty(\bar\omega, \R^d)$ according to the ``step''
construction in Lemma \ref{lem_step}, involving the periodic profile functions
$\Gamma, \bar\Gamma, \dbar\Gamma$ and the notation $t_\eta=\langle x, \eta\rangle$:
\begin{equation*}
\begin{split}
& v_i(x) = v_{i-1}(x) + \frac{1}{\lambda_i} a_\delta^s(x)\Gamma(\lambda_i t_{\eta_\delta})e_\gamma,\\
& w_i(x) = w_{i-1}(x) - \frac{1}{\lambda_i} a_\delta^s(x)\Gamma(\lambda_it_{\eta_\delta})\nabla v_{i-1}^\gamma 
-  \frac{1}{\lambda_i^2} a_\delta^s(x)\bar\Gamma(\lambda_it_{\eta_\delta})\nabla a_{\delta}^s
+  \frac{1}{\lambda_i} a_\delta^s(x)^2\dbar\Gamma(\lambda_it_{\eta_\delta})\eta_\delta.
\end{split}
\end{equation*}
We observe that by construction of $v_i$, the second term in $w_{i}$ can be rewritten as follows:
\begin{equation}\label{w_simp}
\frac{1}{\lambda_i} a_\delta^s(x)\Gamma(\lambda_it_{\eta_\delta})\nabla v_{i-1}^\gamma 
= \frac{1}{\lambda_i} a_\delta^s(x)\Gamma(\lambda_it_{\eta_\delta})\nabla v_{jk}^\gamma. 
\end{equation}
We eventually define:
\begin{equation}\label{vw_fin}
\tilde v = v_N,\qquad \tilde w = w_N-\sum_{s=0}^{S-1}\tilde C_sx.
\end{equation}

\medskip

{\bf 5. (Induction definition: deficits)} For each $i=1\ldots N$, we define the partial deficit:
$$ V_i = \big(\frac{1}{2}(\nabla v_i)^T\nabla v_i+\sym\nabla
w_i\big) - \big(\frac{1}{2}(\nabla v_{i-1})^T\nabla v_{i-1}+\sym\nabla w_{i-1}\big),$$
and for each $s=1\ldots S$ we define the combined deficit
$\mathcal{D}_s\in\mathcal{C}^\infty(\bar\omega, \R^{d\times d}_\sym)$ in:
\begin{equation*}
\begin{split}
\mathcal{D}_s  = & \; -\big(\frac{1}{2}(\nabla v_{sd_*})^T\nabla v_{sd_*}+\sym\nabla
w_{sd_*}\big) + \big(\frac{1}{2}(\nabla v_{(s-1)d_*})^T\nabla v_{(s-1)d_*}+\sym\nabla w_{(s-1)d_*}\big)
\\ & +\sum_{\delta=1}^{d_*}(a_\delta^{s-1})^2\eta_\delta\otimes\eta_\delta
= - \sum_{i=(s-1)d_*+1}^{sd_*} \Big(V_i - (a_\delta^{s-1})^2\eta_\delta\otimes \eta_\delta\Big),
\end{split}
\end{equation*}
where in components of the last sum we used the convention (\ref{isj}), setting $\delta=\delta(i)=1\ldots d_*$.
By Lemma \ref{lem_step} and noting (\ref{w_simp}), for each $i=(s-1)d_* \ldots {sd_*}$ as above, we get:
\begin{equation}\label{prep_defi_s}
\begin{split}
V_i - (a_\delta^{s-1})^2\eta_\delta\otimes \eta_\delta
 = & - \frac{1}{\lambda_i} a_\delta^{s-1} \Gamma(\lambda_i t_{\eta_\delta})\nabla^2 v_{jk}^\gamma
- \frac{1}{\lambda_i^2}a^{s-1}_\delta \bar\Gamma(\lambda_i t_{\eta_\delta})\nabla^2a^{s-1}_\delta
\\ & + \frac{1}{\lambda_i^2}\big(\frac{1}{2}\Gamma(\lambda_i
t_{\eta_\delta})^2-\bar\Gamma(\lambda_i t_{\eta_\delta})\big) \nabla a^{s-1}_\delta\otimes\nabla a^{s-1}_\delta,
\end{split}
\end{equation}
where $j=0\ldots J-1$ is again set according to (\ref{isj}).

\medskip

{\bf 6. (Inductive estimates)} In steps 7-10 below we will prove the following estimates:
\begin{align*}
& \hspace{-3mm} \left. \begin{array}{l} \|v_i - v_{i-1}\|_1\leq
    C\|\mathcal{D}\|_0^{1/2} \vspace{1mm} \\ \|w_i -
w_{i-1}\|_1\leq C\|\mathcal{D}\|_0^{1/2}(1+\|\nabla v\|_0) \end{array}\right\}\quad\mbox{
  for all }\; i=1\ldots N, \tag*{(\theequation)$_1$}\refstepcounter{equation} \label{Fbound1}\medskip\\
& \hspace{-3mm} \left. \begin{array}{l}
\displaystyle{ \|\nabla^{(m+1)}v_{kj}\|_0\leq C\frac{\lambda_{kj}^{m-1}}{l} (\lambda l)^j\|\mathcal{D}\|_0^{1/2} }
\vspace{1mm} \\  \displaystyle{\|\nabla^{(m+1)}w_{kj}\|_0\leq C\frac{\lambda_{kj}^{m-1}}{l} (\lambda
l)^j\|\mathcal{D}\|_0^{1/2} (1+\|\nabla v\|_0)}
\end{array}\right\} \quad\mbox{ for all }\; j=0\ldots J, \; \; m\geq
1, \tag*{(\theequation)$_2$}\label{Fbound2} \medskip\\
& \tilde C_s\leq \frac{C}{(\lambda l)^s}\|\mathcal{D}\|_0, \; \quad
\|\nabla^{(m)}\mathcal{D}_s\|_0\leq C\frac{\lambda_{sd_*}^m}{(\lambda
  l)^s}\|\mathcal{D}\|_0 \quad\mbox{ for all }\; s=0\ldots S, \;\;
m\geq 0, \tag*{(\theequation)$_3$}\label{Fbound3} \medskip\\
& \|\nabla^{(m)}a^s\|_0\leq C\frac{\lambda_{sd_*}^m}{(\lambda
  l)^{s/2}}\|\mathcal{D}\|_0^{1/2} \quad\mbox{ for all }\; s=0\ldots
S-1, \;\; m\geq 0. \tag*{(\theequation)$_4$}\label{Fbound4} 
\end{align*}
We observe that all the bounds are already valid at their lowest counter values:
by \ref{pr_stima3} there holds \ref{Fbound2} for $j=0$,
the first bound in \ref{Fbound3} and the bound in \ref{Fbound4} at
$s=0$ have been established in (\ref{induC_0}), while the second bound
in \ref{Fbound3} at $s=0$ is exactly \ref{pr_stima4}. To show
\ref{Fbound1} at $i=1$, we use (\ref{induC_0}) and \ref{pr_stima3} in:
\begin{equation*}
\begin{split}
& \|v_1-v_0\|_1\leq C\Big(\|a^0\|_0 +
\frac{\|\nabla a^0\|_0}{\lambda}\Big) \leq C\|\mathcal{D}\|^{1/2}_0
\big(1+\frac{1}{\lambda l}\big)\leq C\|\mathcal{D}\|^{1/2}_0,\\
& \|w_1-w_0\|_1\leq C\Big(\|a^0\|_0 \|\nabla v_0\|_0 + \|a^0\|_0^2 + 
\frac{\|\nabla a^0\|_0^2+ \|a^0\|_0 \|\nabla^2 a^0\|_0}{\lambda^2} \\
& \qquad\qquad \qquad\qquad + 
\frac{\|a^0\|_0 \|\nabla a^0\|_0 +\|\nabla a^0\|_0 \|\nabla v_0\|_0+\|a^0\|_0 \|\nabla^2 v_0\|_0}{\lambda} 
\Big) \\ & \qquad\qquad\quad \leq C\|\mathcal{D}\|_0^{1/2}(1+\|\nabla v\|_0 ),
\end{split}
\end{equation*}
because $\lambda l\geq 1$ and $\|\nabla v_0\|_0\leq \|\nabla v\|_0 +
C\|\mathcal{D}\|^{1/2}\leq C(1+ \|\nabla v\|_0 )$ from \ref{pr_stima1}.

\bigskip

{\bf 7. (Proof of estimate \ref{Fbound1})} For $i\in(1, N]$, we write:
$$i\in (jk, (j+1)k]\cap (sd_*, (s+1)d_*]$$ 
with $j,s$ as in (\ref{isj}). By \ref{Fbound4}, we get:
\begin{equation*}
\|v_i-v_{i-1}\|_1\leq C\Big(\|a^s\|_0 +
\frac{\|\nabla a^s\|_0}{\lambda_i}\Big) \leq \frac{C}{(\lambda
  l)^{s/2}}\|\mathcal{D}\|^{1/2}_0\big(1+
\frac{\lambda_{sd_*}}{\lambda_i }\big)\leq C\|\mathcal{D}\|^{1/2}_0,
\end{equation*}
where we used that $\lambda l\geq 1$ and $\lambda_{sd_*} l \leq
\lambda_i l$, due to  $i>sd_*$. The bound for the $w$-increment follows by
\ref{Fbound2} at $m=1$, \ref{Fbound4}, \ref{Fbound1} and \ref{pr_stima1}:
\begin{equation*}
\begin{split}
\|w_i-w_{i-1}\|_1 &\leq C\Big(\|a^s\|_0 \|\nabla v_{jk}\|_0 +
\|a^s\|_0^2 + \frac{\|\nabla a^s\|_0^2 + \|a^s\|_0 \|\nabla^2
  a^s\|_0}{\lambda_i^2} \\ & \quad \qquad + \frac{\|\nabla a^s\|_0\|\nabla v_{jk}\|_0
  +\|a^s\|_0\|\nabla^2 v_{jk}\|_0 + \|a^s\|_0 \|\nabla a^s\|_0}{\lambda_i}
\Big) \\ & \leq \frac{C}{(\lambda l)^{s/2}}\|\mathcal{D}\|^{1/2}_0\big(1+
\frac{\lambda_{sd_*}}{\lambda_i }+\frac{\lambda_{sd_*}^2}{\lambda_i^2}
  +\frac{(\lambda l)^j}{\lambda_il} \big)\big(1+\|\nabla v\|_0\big)\leq
C\|\mathcal{D}\|^{1/2}_0 \big(1+\|\nabla v\|_0\big),
\end{split}
\end{equation*}
where again we used $\lambda_{sd_*} l \leq \lambda_i l$ due to
$i>sd_*$, and $(\lambda l)^j\leq \lambda_il$ due to $i>jk$.

\medskip

{\bf 8. (Proof of estimate \ref{Fbound2})} Let $i=1\ldots N$ and $m\geq 1$. Write: 
$$i\in ((j-1)k, jk]\cap (sd_*, (s+1)d_*]$$ 
with $j=1\ldots J$, $s=0\ldots S-1$. Then:
\begin{equation*}
\begin{split}
\|\nabla^{(m+1)}(v_i-v_{i-1})\|_0 & \leq C
\sum_{p+q=m+1}\lambda_i^{p-1}\|\nabla^{(q)}a^s\|_0 \leq C\lambda_i^{m-1}
\sum_{q=0}^{m+1}\frac{\lambda_{sd_*}^q \lambda_i \|\mathcal{D}\|_0^{1/2}}{\lambda_i^q (\lambda l)^{s/2}}
\\ & \leq C \lambda_i^{m-1} \frac{\lambda_i}{(\lambda l)^{s/2} }
\|\mathcal{D}\|_0^{1/2} = C \lambda_i^{m-1}
\frac{(\lambda l)^j}{l} \|\mathcal{D}\|_0^{1/2}
\end{split}
\end{equation*}
because $\lambda_{sd_*} \leq \lambda_i $ due to $i>sd_*$, and in fact from (\ref{count_lam}):
$$\lambda_i=\frac{(\lambda l)^{j+s/2}}{l}.$$ 
The above justifies:
$$\|\nabla^{(m+1)}(v_{kj} - v_{(j-1)k})\|_0 \leq \sum_{i=(j-1)k+1}^{jk}\|\nabla^{(m+1)}(v_i-v_{i-1})\|_0
\leq C\lambda_{jk}^{m-1}\frac{(\lambda l)^j}{l} \|\mathcal{D}\|_0^{1/2},$$
since $i\mapsto \lambda_i$ is a nondecreasing function. Further, by
\ref{pr_stima3} we get:
$$\|\nabla^{(m+1)}v_0\|_0\leq \frac{C}{l^m}\|\mathcal{D}\|_0^{1/2} =
C\frac{\lambda_0^{m-1}}{l} \|\mathcal{D}\|_0^{1/2}\leq
C\lambda_{jk}^{m-1}\frac{(\lambda l)^j}{l} \|\mathcal{D}\|_0^{1/2}.$$ 
The above two bounds prove the first statement in \ref{Fbound2}. 

\smallskip

Towards proving the second bound, we note that the increment in $w$ is estimated:
\begin{equation}\label{wdif_pom}
\begin{split}
\|\nabla^{(m+1)}&(w_i-w_{i-1})\|_0  \leq  C
\sum_{p+q+t=m+1}\lambda_i^{p-1}\|\nabla^{(q)}a^s\|_0 \|\nabla^{(t+1)}v_{(j-1)k}\|_0
\\ & + C \sum_{p+q+t=m+1}\Big(\lambda_i^{p-2}\|\nabla^{(q)}a^s\|_0 \|\nabla^{(t+1)}a^s\|_0 +
\lambda_i^{p-1}\|\nabla^{(q)}a^s\|_0 \|\nabla^{(t)}a^s\|_0\Big). 
\end{split}
\end{equation}
We split the first sum in the right hand side above into cases $t=0$
and $t\geq 1$, so that by \ref{Fbound4} and \ref{Fbound2}, together
with \ref{Fbound1} and \ref{pr_stima1}:
\begin{equation*}
\begin{split}
& \sum_{p+q+t=m+1}\lambda_i^{p-1}\|\nabla^{(q)}a^s\|_0 \|\nabla^{(t+1)}v_{(j-1)k}\|_0
\\ & \qquad \quad\leq \|\nabla v_{(j-1)k}\|_0\sum_{p+q=m+1}\lambda_i^{p-1}\|\nabla^{(q)}a^s\|_0 
+ \sum_{p+q+t=m}\lambda_i^{p-1}\|\nabla^{(q)}a^s\|_0 \|\nabla^{(t+2)}v_{(j-1)k}\|_0
\\ &\qquad \quad \leq C\lambda_i^m \big(1+\|\nabla v\|_0\big)\sum_{q=0}^{m+1}
\frac{\lambda_{sd_*}^q \|\mathcal{D}\|_0^{1/2}}{\lambda_i^q (\lambda l)^{s/2}}
+ C\lambda_i^m \sum_{p+q+t=m}
\frac{\lambda_{sd_*}^q} {\lambda_i^q} \frac{\lambda_{(j-1)k}^t}{\lambda_i^t}\frac{
(\lambda l)^{j-1}\|\mathcal{D}\|_0}{(\lambda_i l)(\lambda l)^{s/2}} \\ &
\qquad\quad \leq C\lambda_i^{m} \frac{\|\mathcal{D}\|_0^{1/2}}{(\lambda l)^{s/2}}
\big(1+\|\nabla v\|_0\big) + C\lambda_i^{m-1} \|\mathcal{D}\|_0
\frac{(\lambda l)^{j-1-s/2}}{l}
\end{split}
\end{equation*}
where in the last bound we used the fact that $\lambda_{sd_*} \leq
\lambda_i $ due to $i>sd_*$, and $\lambda_{(j-1)k} \leq \lambda_i $ due to $i>(j-1)k$.
The second term in (\ref{wdif_pom}) is similarly estimated:
\begin{equation*}
\begin{split}
& \sum_{p+q+t=m+1}\Big(\lambda_i^{p-2}\|\nabla^{(q)}a^s\|_0 \|\nabla^{(t+1)}a^s\|_0 +
\lambda_i^{p-1}\|\nabla^{(q)}a^s\|_0 \|\nabla^{(t)}a^s\|_0\Big)
\\ &\qquad \quad \leq C\lambda_i^m \sum_{p+q+t=m+1}
\Big(\frac{\lambda_{sd_*}^{q+t+1}} {\lambda_i^{q+t+1}} \frac{\|\mathcal{D}\|_0}{(\lambda l)^{s}} 
+ \frac{\lambda_{sd_*}^{q+t}} {\lambda_i^{q+t}} \frac{\|\mathcal{D}\|_0}{(\lambda l)^{s}} \Big)\leq
C\lambda_i^m \frac{\|\mathcal{D}\|_0}{(\lambda l)^s}.
\end{split}
\end{equation*}
Summing the last two displayed formulas, gives in view of  (\ref{wdif_pom}):
\begin{equation*}
\begin{split}
\|\nabla^{(m+1)}(w_i-w_{i-1})\|_0  & \leq C\lambda_i^{m-1}\|\mathcal{D}\|_0^{1/2}
\Big(\frac{\lambda_i}{(\lambda l)^{s/2}} + \frac{(\lambda l)^{j-1-s/2}}{l}\Big)
\big(1+\|\nabla v\|_0\big) \\ & \leq  C\lambda_i^{m-1}\frac{(\lambda l)^j}{l}\|\mathcal{D}\|_0^{1/2}
\big(1+\|\nabla v\|_0\big).  
\end{split}
\end{equation*}
The above implies the second statement in \ref{Fbound2}, in view of \ref{pr_stima3} resulting in: 
$$\|\nabla^{(m+1)}w_0\|_0 \leq \frac{C}{l^m}\|\mathcal{D}\|_0^{1/2} \leq
C\lambda_{jk}^{m-1}\frac{(\lambda l)^j}{l}  \|\mathcal{D}\|_0^{1/2},$$
and since $i\mapsto \lambda_i$ is a nondecreasing function, which yields:
\begin{equation*}
\begin{split}
\|\nabla^{(m+1)}(w_{kj} - w_{(k-1)j})\|_0 & \leq \sum_{i=(k-1)j+1}^{kj}\|\nabla^{(m+1)}(w_i-w_{i-1})\|_0
\\ & \leq C\lambda_{jk}^{m-1}\frac{(\lambda l)^j}{l} \|\mathcal{D}\|_0^{1/2}\big(1+\|\nabla v\|_0\big).
\end{split}
\end{equation*}

\medskip

{\bf 9. (Proof of estimate \ref{Fbound3})} Let $i=1\ldots N$ and $m\geq 0$. Write: 
$$i\in (jk, (j+1)k]\cap ((s-1)d_*, sd_*]$$ 
with $j=0\ldots J-1$, $s=1\ldots S$. Denoting $\delta = i-(s-1)d_*$,
we use \ref{Fbound2}, \ref{Fbound4} in (\ref{prep_defi_s}):
\begin{equation*}
\begin{split}
\big\|\nabla^{(m)}& \big(V_i -(a_\delta^{s-1})^2\eta_\delta\otimes\eta_\delta\big)\big\|_0 \leq 
C \sum_{p+q+t=m} \lambda_i^{p-1}\|\nabla^{(q)}a^{s-1}\|_0\|\nabla^{(t+2)}v_{jk}\|_0 
\\ & \quad + C \sum_{p+q+t=m} \lambda_i^{p-2}\Big(\|\nabla^{(q+1)}a^{s-1}\|_0\|\nabla^{(t+1)}a^{s-1}\|_0 
+ \|\nabla^{(q)}a^{s-1}\|_0\|\nabla^{(t+2)}a^{s-1}\|_0 \Big) \\ & \leq
C\lambda_i^m \|\mathcal{D}\|_0 \Big(\sum_{p+q+t=m}
\frac{\lambda_{(s-1)d_*}^q \lambda_{jk}^t}{\lambda_i^{q+t}}\frac{(\lambda l)^{j-(s-1)/2}}{\lambda_i l}
+ \sum_{p+q+t=m}
\frac{\lambda_{(s-1)d_*}^{q+t+2} }{\lambda_i^{q+t+2}}\frac{1}{(\lambda  l)^{s-1}} \Big).
\end{split}
\end{equation*}
Since $\lambda_{(s-1)d_*}\leq \lambda_i$ by $i>(s-1)d_*$, and
$\lambda_{jk}\leq \lambda_i$ by $i>jk$, we simplify the above estimate:  
\begin{equation*}
\begin{split}
\big\|\nabla^{(m)}\big(V_i
-(a_\delta^{s-1})^2\eta_\delta\otimes\eta_\delta\big)\big\|_0 & \leq 
C\lambda_i^m \|\mathcal{D}\|_0 \Big(\frac{(\lambda l)^{j-(s-1)/2}}{\lambda_i l}
+  \frac{\lambda_{(s-1)d_*}^{2} }{\lambda_i^{2}}\frac{1}{(\lambda l)^{s-1}} \Big)
\\ & \leq C\frac{\lambda_i^m}{(\lambda l)^{s}} \|\mathcal{D}\|_0, 
\end{split}
\end{equation*}
where the last bound follows since $\lambda_i\geq \lambda_{(s-1)d_*}
(\lambda l)^{1/2}$ by $i>(s-1)d_*$, and since (\ref{count_lam}) yields:
$$\lambda_i = \lambda (\lambda l)^{j+(s-1)/2}.$$
Note that having the second power of the quotient
$\lambda_{(s-1)d_*}/\lambda_i$ was essential to provide the missing
multiplier $\frac{1}{\lambda l}$ in order for
both considered error terms to have  the right order $C\frac{\lambda_i^m}{(\lambda l)^{s}}
\|\mathcal{D}\|_0$. It is precisely at this point where we are using
the step construction in Lemma \ref{lem_step}, and where the
previous construction in \cite[Lemma 2.2]{lewpak_MA} would not be sufficient.

Consequently, summing the partial deficits in $\mathcal{D}_s$, we
obtain the second bound in \ref{Fbound3}:
$$\|\nabla^{(m)}\mathcal{D}_s\|_0\leq C
\sum_{i=(s-1)d_*+1}^{sd_*}\frac{\lambda_i^m}{(\lambda l)^s}
\|\mathcal{D}\|_0 \leq  C \frac{\lambda_{sd_*}^m}{(\lambda l)^{s}} \|\mathcal{D}\|_0,$$
as $\lambda_i\leq\lambda_{sd_*}$ for all $i\leq sd_*$.
The first bound in \ref{Fbound3} is now also immediate:
$$\tilde C_s \leq C\Big(\frac{\|\mathcal{D}\|_0}{(\lambda l)^s} +
\|\mathcal{D}_s\|_0\Big)\leq \frac{C}{(\lambda l)^{s}} \|\mathcal{D}\|_0.$$

\medskip

{\bf 10. (Proof of estimate \ref{Fbound4})} Let $s=1\ldots S-1$. From
(\ref{as0}) and the first bound in \ref{Fbound3}, we readily deduce
\ref{Fbound4} at $m=0$:
$$\|a^s\|_0\leq \frac{C}{(\lambda l)^{s/2}}\|\mathcal{D}\|_0^{1/2}.$$
For $m\geq 1$, we use the preparatory bound (\ref{asm}) in which we
take account of \ref{Fbound3} and \ref{Fbound3}:
\begin{equation*}
\|\nabla^{(m)}a^s\|_0\leq \frac{C}{(\lambda
  l)^{s/2}}\|\mathcal{D}\|_0^{1/2} \sum_{p_1+2p_2+\ldots +mp_m=m}
\prod_{t=1}^m \Big(\lambda_{sd_*}^{tp_t}\Big(\frac{\|\mathcal{D}\|_0}{(\lambda l)^s
  \tilde C_s}\Big)^{p_t}\Big)\leq C\frac{\lambda_{sd_*}^m}{(\lambda l)^{s/2}}\|\mathcal{D}\|_0^{1/2}, 
\end{equation*}
in virtue of having $\frac{\|\mathcal{D}\|_0}{(\lambda l)^s \tilde
  C_s}\leq C$. This completes the proof of all the inductive estimates.

\medskip

{\bf 11. (End of proof)} 
We now show that \ref{Fbound1} - \ref{Fbound4} imply the bounds
claimed in the Theorem. Recall (\ref{vw_fin}), and use
\ref{Fbound1}, \ref{Fbound3} and \ref{pr_stima1} to conclude \ref{Abound1}:
\begin{equation*}
\begin{split}
& \|\tilde v- v\|_1\leq \|v_0-v\|_1 + \sum_{i=1}^N \|v_i - v_{i-1}\|_1\leq C \|\mathcal{D}\|_0^{1/2} ,\\
& \|\tilde w - w\|_1\leq \|w_0-w\|_1 + \sum_{i=1}^N \|w_i -
w_{i-1}\|_1 + C \sum_{s=0}^{S-1}\tilde C_s\leq  C \|\mathcal{D}\|_0^{1/2} (1+\|\nabla v\|_0).
\end{split}
\end{equation*}
By \ref{Fbound2} with $m=1$, there follows \ref{Abound2}:
\begin{equation*}
\begin{split}
& \|\nabla^2\tilde v\|_0= \|\nabla^2 v_N\|_0\leq C\frac{(\lambda
  l)^{J}}{l} \|\mathcal{D}\|_0^{1/2} = CM(\lambda l)^J =
CM\sigma^{J/S} = CM\sigma^{d_*/k},\\
&\|\nabla^2\tilde w\|_0= \|\nabla^2 w_N\|_0\leq C\frac{(\lambda
  l)^{J}}{l} \|\mathcal{D}\|_0^{1/2} (1+\|\nabla v\|_0)= CM\sigma^{d_*/k} (1+\|\nabla v\|_0),
\end{split}
\end{equation*}
where we used the definition $\sigma = (\lambda l)^S$ and the fact that:
$$\frac{J}{S} = \frac{J}{N}\cdot\frac{N}{S} = \frac{d_*}{k}.$$
Finally, \ref{pr_stima2}, and \ref{Fbound3} applied with $m=0$ yield \ref{Abound3}:
\begin{equation*}
\begin{split}
\|\tilde{\mathcal{D}}\|_0 = & ~ \|(A-A_0) -\mathcal{D}_S\|_0 \leq
\|A-A_0\|_0 + \|\mathcal{D}_S\|_0 \\ \leq & ~ C\Big(l^\beta\|A\|_{0,\beta} +
\frac{\|\mathcal{D}\|_0}{(\lambda l)^S}\Big) \\ = & ~  
C\Big(\frac{\|A\|_{0,\beta}}{M^\beta}  \|\mathcal{D}\|_0^{\beta/2} +
\frac{\|\mathcal{D}\|_0}{\sigma}\Big),
\end{split}
\end{equation*}
in view of the following direct decomposition:
\begin{equation*}
\begin{split}
\tilde{\mathcal{D}} & = (A-A_0) +\mathcal{D}_0 -
\Big(\big(\frac{1}{2}(\nabla\tilde v)^T\nabla \tilde v + \sym\nabla \tilde w\big)
- \big(\frac{1}{2}(\nabla v_0)^T\nabla v_0 + \sym\nabla w_0\big)\Big) \\ & = 
(A-A_0) +\mathcal{D}_0 +\sum_{s=0}^{S-1}\tilde C_s\Id_d - \sum_{s=1}^{S}\sum_{i=(s-1)d_*+1}^{sd_*}V_i
\\ & = (A-A_0) +\sum_{s=0}^{S-1}\tilde C_s\Id_d  + \sum_{s=0}^S\mathcal{D}_s -
\sum_{s=1}^{S}\sum_{\delta=1}^{d_*}(a_\delta^{s-1})^2\eta_\delta\otimes
\eta_\delta \\ & = (A-A_0) +\mathcal{D}_S
\end{split}
\end{equation*}
The proof is done.
\endproof

\section{The ``stage'' using K\"allen's approach: a proof of Theorem
  \ref{thm_stageK}}\label{sec3.5} 

In this section, we prove flexibility of (\ref{VK}) up to regularity
$\mathcal{C}^{1,1}$, provided that $k\geq 2d_*$.

\medskip

\noindent {\bf Proof of Theorem \ref{thm_stageK}}

\smallskip

{\bf 1. (Preparing the data)} We set the mollification scale $l$ and the frequency $\lambda$:
\begin{equation}\label{defi_lK} 
l=\frac{\|\mathcal{D}\|_0^{1/2}}{M}\in (0, 1], \qquad \lambda =
\frac{\sigma^{1/N}}{l} >1 \quad \mbox{where } \; \frac{1}{\delta} \leq
N \in \mathbb{N}.
\end{equation}
Taking $\phi_l (x) = \frac{1}{l^d}\phi(x/l)$ as in Lemma \ref{lem_stima}, we define:
$$v_0=v\ast \phi_l,\quad w_0=w\ast \phi_l, \quad A_0=A\ast \phi_l,
\quad {\mathcal{D}}_0= A_0-\big(\frac{1}{2}(\nabla v_0)^T\nabla v_0 + \sym\nabla w_0\big).$$
and observe the initial bounds \ref{pr_stima1}-\ref{pr_stima4} exactly as in the proof
of Theorem \ref{thm_stage} in section \ref{sec_stage}.

\bigskip

{\bf 2. (Induction definition: improving the deficit decomposition)}  Let
$\{\eta_i\in\mathbb{S}^{d-1}\}_{i=1}^{d_*}$, $r_0>0$ and the linear maps ${\bar a}_i$ be as in Lemma
\ref{lem_dec_def}. For $r=0\ldots N$ we iteratively define the perturbation amplitude vectors
$a^r = [a_i^r]_{i=1}^{d_*}\in\mathcal{C}^\infty(\bar\omega,\R^{d_*})$, by setting:
\begin{equation*}
\begin{split}
& a_i^0(x)=0 \quad \mbox{ for all }\; i=1\ldots d_*, \;\; x\in\bar\omega,
\\ & a^r_i(x)= \Big(2\bar a_i\big(\tilde C\Id_d + \mathcal{D}_0(x)-\mathcal{E}_{r-1}(x)\big)\Big)^{1/2} 
\quad \mbox{ for all }\; i=1\ldots d_*, \; \; r = 1\ldots N,\;\; x\in\bar\omega,
\\ &  \mbox{with } ~\tilde C= \frac{2}{r_0}\Big(\|\mathcal{D}\|_0 + \|\mathcal{D}_0\|_0\Big),
\end{split}
\end{equation*}
where the error fields $\mathcal{E}_r \in\mathcal{C}^\infty(\bar\omega,\R^{d\times d}_\sym)$ are given
by the right hand side of (\ref{step_errK}):
\begin{equation*}
\begin{split}
& \mathcal{E}_{r}=-\frac{1}{\lambda} \sum_{i=1}^{d_*}a_i^r \Big(G(\lambda t_{\eta_i})\nabla^2 v_0^i +
\bar G(\lambda t_{\eta_i})\nabla^2 v_0^{d_*+i}\Big) +
\frac{1}{2\lambda^2} \sum_{i=1}^{d_*} \nabla a_i^r\otimes \nabla a_i^r
\quad \mbox{ for all }\; r = 0\ldots N.
\end{split}
\end{equation*}
Our definition of $a^r$ is correctly posed if only $\Id_d + \frac{1}{\tilde C}\big
(\mathcal{D}_0 (x)-\mathcal{E}_{r-1}(x)\big)\in
B(\Id_d,r_0)\subset\R^{d\times d}_\sym$ for all $x\in \bar\omega$. 
To this end, we will prove that $\lambda l$ large enough (in
function of $\omega$ and $N$) implies:
\begin{equation}\label{need_errK}
\|\mathcal{E}_r\|_0\leq \frac{r_0\tilde C }{2} \quad \mbox{ for all } \; r=0\ldots N-1.
\end{equation}
Note that then automatically there holds for all $r=1\ldots N$:
\begin{equation}\label{low_bd_asK}
\begin{split}
& \tilde C\Id_d + \mathcal{D}_0-\mathcal{E}_{r-1} = \frac{1}{2}\sum_{i=1}^{d_*}
(a_i^r)^2\eta_i\otimes\eta_i \\ & \mbox{and }\quad (a_i^r)^2\geq
2r_0\tilde C \;\mbox{ in } \;\bar\omega,\;\mbox{ for all } \; i=1\ldots d_*.
\end{split}
\end{equation}

We now develop some preliminary estimates. Firstly, by  the linearity of $\bar
a_i$ in Lemma \ref{lem_dec_def}:
\begin{equation}\label{as0K}
\|a^r\|_0\leq C\| \tilde C \Id_d +
\mathcal{D}_0-\mathcal{E}_{r-1}\|_0^{1/2}\leq C \big(\|\mathcal{D}\|_0
+ \|\mathcal{E}_{r-1}\|_0\big)^{1/2}.
\end{equation}
Secondly, by the  Fa\'a di Bruno formula, exactly as proved in
(\ref{asm}), there holds for $m\geq 1$: 
\begin{equation}\label{asmK}
\|\nabla^{(m)}a^r\|_0 \leq C \|a^r\|_0\sum_{p_1+2p_2+\ldots mp_m=m}\prod_{t=1}^m
\Big(\frac{\|\nabla^{(t)}(\mathcal{D}_0-\mathcal{E}_{r-1})\|_0}{\tilde C}\Big)^{p_t}.
\end{equation}
Thirdly, applying Fa\'a di Bruno's formula to the inverse rather
than the square root, we get:
\begin{equation}\label{asm2K}
\begin{split}
& \|\nabla^{(m)}\Big(\frac{1}{a^{r}_i+a^{r-1}_i}\Big)\|_0 \\ & \leq \frac{C}{\tilde
C^{1/2}} \sum_{p_1+2p_2+\ldots mp_m=m}\prod_{t=1}^m
\Big(\frac{\|\nabla^{(t)}(a^{r} + a^{r-1})\|_0}{\tilde
  C^{1/2}}\Big)^{p_t} \quad\mbox{ for all }\; i=1\ldots d_*.
\end{split}
\end{equation}
The formulas (\ref{as0K}), (\ref{asmK}), (\ref{asm2K}) hold for all
$r=1\ldots N$ with constants $C$ depending on $\omega$, $m$.

\bigskip

{\bf 3. (Inductive estimates)} In the next step we will prove the following estimates:
\begin{align*}
&  \|a^r\|_0\leq C\|\mathcal{D}\|_0^{1/2} \quad\mbox{
  for all }\; r=1\ldots N, \tag*{(\theequation)$_1$}\refstepcounter{equation} \label{Fbound1K}\medskip\\
& \|\nabla^{(m)} a^r\|_0\leq C \frac{\lambda^{m}}{\lambda l}\|\mathcal{D}\|_0^{1/2} 
\quad\mbox{ for all }\; r=1\ldots N, \; \; m\geq 1, \tag*{(\theequation)$_2$}\label{Fbound2K} \medskip\\
& \|\nabla^{(m)} \big(\mathcal{E}_r-\mathcal{E}_{r-1}\big)\|_0 \leq
C \frac{\lambda^{m}}{(\lambda l)^r}\|\mathcal{D}\|_0
\quad\mbox{ for all }\; r=1\ldots N, \;\;
m\geq 0, \tag*{(\theequation)$_3$}\label{Fbound3K}
\end{align*}
with constants $C$ that depend only on $\omega$, $r$ and $m$. In general,
$C\to\infty$ as $m\to \infty$ or $r\to \infty$, so it is crucial that eventually
only finitely many of bounds above are used.
In particular, we note that \ref{Fbound3K} implies (\ref{need_errK}) provided
that $\lambda l$ surpasses the sum of constants $C$ corresponding to
$m=0$ and $r=1\ldots N$. This is achieved by taking $\sigma = (\lambda
l)^N\geq \sigma_0$ where the constant $\sigma_0\sim C^{1/\delta}\gg 1$
depends only on $\omega$ and $\delta$. 

\medskip

\noindent We now check that \ref{Fbound1K}-\ref{Fbound3K} are already valid at their
lowest counter value $r=1$. Indeed, \ref{Fbound1K} is a consequence of 
(\ref{as0K}), while \ref{Fbound2K} further follows from (\ref{asmK}) in view of \ref{pr_stima4}:
\begin{equation*}%\label{induC_0} 
\begin{split}
& \|\nabla^{(m)}a^1\|_0  \leq C \|a^1\|_0\sum_{p_1+2p_2+\ldots mp_m=m}\prod_{t=1}^m
\Big(\frac{\|\nabla^{(t)}\mathcal{D}_0\|_0}{\tilde C}\Big)^{p_t}
\\ & \leq C \|\mathcal{D}\|_0^{1/2} \sum_{p_1+2p_2+\ldots mp_m=m}\prod_{t=1}^m
\Big(\frac{\|\mathcal{D}\|_0}{\tilde C l^t}\Big)^{p_t} \leq 
\frac{C}{l^m}\|\mathcal{D}\|_{0}^{1/2}= C\frac{\lambda^m}{(\lambda
  l)^m}\|\mathcal{D}\|_{0}^{1/2}\leq C\frac{\lambda^m}{\lambda
  l}\|\mathcal{D}\|_{0}^{1/2}.
\end{split}
\end{equation*}
For the estimate
\ref{Fbound3K}, with the help of the above we compute at $m=0$:
\begin{equation*}
\|\mathcal{E}_1\|_0\leq C\Big(\frac{\|a^1\|_0\|\nabla^2v_0\|_0}{\lambda} +
\frac{\|\nabla a^1\|^2_0}{\lambda^2}\Big) \leq
C\Big(\frac{\|\mathcal{D}\|_0}{\lambda l} +
\frac{\|\mathcal{D}\|_0}{(\lambda l)^2} \Big) \leq C\frac{\|\mathcal{D}\|_0}{\lambda l},
\end{equation*}
and further, for all $m\geq 1$ in view of
\ref{pr_stima3} and with $C$ depending on $\omega$ and $m$:
\begin{equation*}
\begin{split}
& \|\nabla^{(m)}\mathcal{E}_1\|_0\leq
C\sum_{p+q+t=m}\lambda^{p-1}\|\nabla^{(q)} a^1\|_0\|\nabla^{(t+2)}v_0\|_0 + 
C\sum_{q+t=m}\lambda^{-2}\|\nabla^{(q+1)} a^1\|_0\|\nabla^{(t+1)}a^1\|_0 
\\ & \leq C\Big(\sum_{p+q=m}\frac{\lambda^{p-1}}{l^{t+1}}\|\mathcal{D}\|_0
+ \sum_{p+q+t=m, q\neq 0} \frac{\lambda^{p+q-1}}{(\lambda l) l^{t+1}}\|\mathcal{D}\|_0
+ \sum_{q+t=m} \frac{\lambda^{q+t+2}}{\lambda^2(\lambda l)^2}\|\mathcal{D}\|_0\Big)
\leq  C \frac{\lambda^{m}}{\lambda l}\|\mathcal{D}\|_0.
\end{split}
\end{equation*}

\bigskip

{\bf 4. (Proof of the inductive estimates)} Assume
that \ref{Fbound1K}--\ref{Fbound3K} hold, up to some counter value
$1\leq r\leq N-1$ and all $m\geq 0$. We will prove their validity at
$r+1$. By (\ref{as0K}) and \ref{Fbound3K} we directly get \ref{Fbound1K}:
$$\|a^{r+1}\|_0\leq C\Big(\|\mathcal{D}\|_0 +
\sum_{j=1}^{r}\|\mathcal{E}_j - \mathcal{E}_{j-1}\|_0\Big)^{1/2}\leq C
\big( 1+ \frac{1}{\lambda l}\big)^{1/2} \|\mathcal{D}\|_0^{1/2}\leq 
C \|\mathcal{D}\|_0^{1/2}.$$
Similarly, from (\ref{asmK}) and \ref{pr_stima4} we conclude \ref{Fbound2K} with $C$
depending on $\omega$ and $m$:
\begin{equation*}
\begin{split}
\|\nabla^{(m)}a^{r+1}\|_0 & \leq C \|a^{r+1}\|_0\sum_{p_1+2p_2+\ldots mp_m=m}\prod_{t=1}^m
\Big(\frac{\|\nabla^{(t)}\mathcal{D}_0\|_0 +
  \sum_{j=1}^r\|\nabla^{(t)}(\mathcal{E}_{j} - \mathcal{E}_{j-1})\|_0}{\tilde C}\Big)^{p_t}
\\ & \leq C \|\mathcal{D}\|_0^{1/2} \sum_{p_1+2p_2+\ldots mp_m=m}\prod_{t=1}^m
\Big(\frac{1}{l^t}+\frac{\lambda^t}{\lambda l}\Big)^{p_t} \\ & \leq 
C \|\mathcal{D}\|_0^{1/2} \sum_{p_1+2p_2+\ldots mp_m=m}
\frac{\lambda^m}{(\lambda l)^{p_1+p_2+\ldots p_m}}\leq 
C\frac{\lambda^{m}}{\lambda l}\|\mathcal{D}\|_0^{1/2}.
\end{split}
\end{equation*}
Towards showing \ref{Fbound3K}, we first deduce from the identity in (\ref{low_bd_asK}) that:
$$\mathcal{E}_{r}-\mathcal{E}_{r-1} = -\frac{1}{2}\sum_{i=1}^{d_*}
\big((a_i^{r+1})^2-(a_i^{r})^2\big) \eta_i\otimes \eta_i,$$
and hence for all $m\geq 0$ we get:
\begin{equation}\label{muK}
\|\nabla^{(m)} \big((a_i^{r+1})^2-(a_i^{r})^2\big)\|_0\leq
C \|\nabla^{(m)}\big(\mathcal{E}_{r}-\mathcal{E}_{r-1}\big) \|_0\leq
C\frac{\lambda^{m}}{(\lambda l)^{r}}\|\mathcal{D}\|_0 \quad\mbox{
  for all }\; i=1\ldots d_*.
\end{equation}
This in particular implies that by recalling the lower bound in (\ref{low_bd_asK}):
\begin{equation}\label{mu1K}
\|{a^{r+1}_i-a^{r}_i}\|_0 \leq \frac{C}{\tilde C^{1/2}}\|(a_i^{r+1})^2-(a_i^{r})^2\|_0
\leq \frac{C}{(\lambda l)^{r}}\|\mathcal{D}\|_0^{1/2} \quad\mbox{
  for all }\; i=1\ldots d_*.
\end{equation}
To estimate derivatives of $(a^{r+1}-a^{r})$, we recall (\ref{asm2K})
and observe that for every $m\geq 1$:
\begin{equation*}
\begin{split}
& \|\nabla^{(m)}\Big(\frac{1}{a^{r+1}_i+a^{r}_i}\Big)\|_0 \leq \frac{C}{\tilde
C^{1/2}} \sum_{p_1+2p_2+\ldots mp_m=m}\prod_{t=1}^m
\Big(\frac{\lambda^t}{\lambda l}\Big)^{p_t} \leq \frac{C}{\tilde C^{1/2}} 
\frac{\lambda^m}{\lambda l} \quad\mbox{ for all }\; i=1\ldots d_*,
\end{split}
\end{equation*}
which in combination with (\ref{muK}) yields for all $m\geq 1$:
\begin{equation}\label{mu2K}
\begin{split}
& \|\nabla^{(m)}\big(a_i^{r+1}-a_i^{r}\big)\|_0\leq C \sum_{q+t=m}
\|\nabla^{(q)}\big((a_i^{r+1})^2-(a_i^{r})^2\big)\|_0 
\|\nabla^{(t)}\Big(\frac{1}{a^{r+1}_i+a^{r}_i}\Big)\|_0 \\ & \leq 
C \frac{\lambda^{m}}{(\lambda l)^{r}}\frac{\|\mathcal{D}\|_0}{\tilde C^{1/2}} 
+ \sum_{q+t=m, t\neq 0} C \frac{\lambda^{q}}{(\lambda
  l)^{r}}\frac{\|\mathcal{D}\|_0}{\tilde C^{1/2}} \frac{\lambda^t}{\lambda l}
\\ & \leq C\frac{\lambda^m}{(\lambda l)^{r}}\|\mathcal{D}\|_0^{1/2} 
\quad\mbox{ for all }\; i=1\ldots d_*,
\end{split}
\end{equation}

We are now ready to estimate the derivatives of:
\begin{equation*}
\begin{split}
\mathcal{E}_{r+1}-\mathcal{E}_r = & -\frac{1}{\lambda}
\sum_{i=1}^{d_*}\big(a_i^{r+1}-a_i^{r}\big) \Big(\Gamma(\lambda t_{\eta_i})\nabla^2 v_0^i + 
\bar\Gamma(\lambda t_{\eta_i})\nabla^2 v_0^{d_*+i}\Big)  \\ & +
\frac{1}{2\lambda^2} \sum_{i=1}^{d_*} \Big(\big(\nabla a_i^{r+1} - \nabla a_i^r\big)\otimes \nabla a_i^{r+1} + 
\nabla a_i^r \otimes \big(\nabla a_i^{r+1} - \nabla a_i^r\big)\Big).
\end{split}
\end{equation*}
Namely, we get for all $m\geq 0$:
\begin{equation*}
\begin{split}
& \|\nabla^{(m)}(\mathcal{E}_{r+1}-\mathcal{E}_r)\|_0 \leq  C 
\sum_{p+q+t=m}\lambda^{p-1}
\|\nabla^{(q)}\big(a^{r+1}-a^{r}\big)\|_0 \|\nabla^{(t+2)}v_0\|_0
\\ & \qquad\qquad\qquad\qquad \quad
+ C \sum_{q+t=m} \lambda^{-2} \|\nabla^{(q+1)} (a^{r+1} - a^r)\|_0
\big(\|\nabla^{(t+1)} a^{r+1} \|_0+ \|\nabla^{(t+1)} a^{r} \|_0\big)
\\ & \leq \sum_{p+q+t=m}\lambda^{p-1} \frac{\lambda^q}{(\lambda l)^{r}l^{t+1}}\|\mathcal{D}\|_0 
+ C \sum_{q+t=m} \lambda^{-2} \frac{\lambda^{q+1}}{(\lambda l)^{r}}
\frac{\lambda^{t+1}}{\lambda l} \|\mathcal{D}\|_0 \leq C
\frac{\lambda^{m}}{(\lambda l)^{r+1}} \|\mathcal{D}\|_0.
\end{split}
\end{equation*}
This ends the proof of the last inductive estimate \ref{Fbound3K}. Observe that the closing of the
bounds as above, was possible due to the absence of the error term
$\frac{1}{\lambda^2} a\bar\Gamma(\lambda t_\eta)\nabla^2a$ in the right
hand side of (\ref{step_errK}), in the step construction generated by Nash's spirals. This term, appearing in
(\ref{step_err}) as the second order deficit generated by Kuiper's corrugations, would result in 
$\mathcal{E}_{r+1}-\mathcal{E}_r$ containing expressions of the form 
$\frac{1}{\lambda^2} a_i^r\bar\Gamma(\lambda t_{\eta_i})(\nabla^2
a^{r+1}-\nabla^2 a^r)$ which do not allow for the gain in the power
of $(\lambda l)$, because each $\|a^r\|_0$ is only of order $1$ in $\tilde
C$, see the bound in \ref{Fbound1K}.

\bigskip

{\bf 5. (End of proof)} 
Define $\tilde v\in\mathcal{C}^\infty(\bar\omega,\R^k)$ and
$\tilde w\in\mathcal{C}^\infty(\bar\omega, \R^d)$ according to the ``step''
construction in Lemma \ref{lem_stepK}, involving the periodic functions
$G, \bar G$ and the notation $t_\eta=\langle x, \eta\rangle$:
\begin{equation}\label{vw_finK}
\begin{split}
& \tilde v= v_1, \qquad \qquad\qquad v_1(x)= v_0(x) + 
\frac{1}{\lambda} \sum_{i=1}^{d_*} a_i^N(x)\big(G(\lambda t_{\eta_i})e_i 
+ \bar G(\lambda t_{\eta_i})e_{d_*+i}\big),\\
& \tilde w = w_1-\tilde C id_d, \qquad
w_1(x) = w_{0}(x) - \frac{1}{\lambda} \sum_{i=1}^{d_*}
a_i^N(x)\big(G(\lambda t_{\eta_i})\nabla v^i_0 
+ \bar G(\lambda t_{\eta_i})\nabla v_0^{d_*+i}\big).
\end{split}
\end{equation}
We now show that \ref{Fbound1K} - \ref{Fbound3K} imply the bounds
claimed in the Theorem. To prove \ref{Abound1K}, we use
\ref{Fbound1K}, \ref{Fbound2K} and \ref{pr_stima1}, \ref{pr_stima3}:
\begin{equation*}
\begin{split}
& \|\tilde v - v\|_1\leq \|v_0-v\|_1 + C\Big(\|a^N\|_0+\frac{\|\nabla a^N\|_0}{\lambda}\Big)\leq
C \|\mathcal{D}\|_0^{1/2} \big(1+\frac{1}{\lambda l}\big) \leq C \|\mathcal{D}\|_0^{1/2} ,\\
& \|\tilde w - w\|_1\leq \|w_0-w\|_1 + 
C\Big(\tilde C + \|a^N\|_0\|\nabla v_0\|_0 + \frac{\|\nabla a^N\|_0\|\nabla
  v_0\|_0 + \|a^N\|_0\|\nabla^2 v_0\|_0}{\lambda} \Big) \\
& \qquad \qquad \leq C \|\mathcal{D}\|_0^{1/2} + C \|\mathcal{D}\|_0^{1/2}\Big(\|\mathcal{D}\|_0^{1/2} +
(\|\mathcal{D}\|_0^{1/2}+\|\nabla v\|_0) + 
\frac{(\|\mathcal{D}\|_0^{1/2}+\|\nabla v\|_0) + \|\mathcal{D}\|_0^{1/2}}{\lambda l} \Big)\\
& \qquad \qquad \leq C \|\mathcal{D}\|_0^{1/2} \big(1+ \|\nabla v\|_0\big).
\end{split}
\end{equation*}
Similarly, there follows \ref{Abound2K} when we recall that $\lambda l
= \sigma^{1/N}\leq\sigma^\delta$ from (\ref{defi_lK}):
\begin{equation*}
\begin{split}
& \|\nabla^2\tilde v\|_0\leq \|\nabla^2 v_0\|_0 +
C\Big(\lambda\|a^N\|_0 + \|\nabla a^N\|_0 +
\frac{\|\nabla^2a^N\|_0}{\lambda}\Big) \\ & \qquad\quad \leq
C \|\mathcal{D}\|_0^{1/2}\Big(\frac{1}{l} + \lambda \Big) 
 = CM(1+ \lambda l) \leq CM\sigma^{1/N},\\
&\|\nabla^2\tilde w\|_0\leq \|\nabla^2 w_0\|_0 + 
C\Big(\lambda\|a^N\|_0\|\nabla v_0\|_0 + \big(\|\nabla a^N\|_0\|\nabla
  v_0\|_0 + \|a^N\|_0\|\nabla^2 v_0\|_0\big)
\\ & \qquad\qquad\qquad\qquad\qquad \qquad  + \frac{\|\nabla^2a^N\|_0\|\nabla v_0\|_0 + \|\nabla
  a^N\|_0\|\nabla^2 v_0\|_0 + \|a^N\|_0\|\nabla^3 v_0\|_0  }{\lambda}\Big)  \\ & 
\qquad\quad \leq C \|\mathcal{D}\|_0^{1/2}\Big(\frac{1}{l} +
\big(\lambda +\frac{1}{l}\big) (\|\mathcal{D}\|_0^{1/2} + \|\nabla v\|_0) + 
\frac{\|\mathcal{D}\|_0^{1/2}}{\lambda l^2}\Big) 
\\ & \qquad\quad \leq CM(1+ \lambda l) (1+\|\nabla v\|_0)
\leq CM\sigma^{1/N} (1+\|\nabla v\|_0).
\end{split}
\end{equation*}
Finally, (\ref{step_errK}) and (\ref{low_bd_asK}) yield \ref{Abound3K},
because we decompose:
$$\tilde{\mathcal{D}} = (A- A_0) + \mathcal{D}_0 - \Big(\frac{1}{2}
\sum_{i=1}^{d_*} (a_i^N)^2 \eta_i\otimes\eta_i + \mathcal{E}_N - \tilde C\Id_d\Big) 
= (A- A_0) - (\mathcal{E}_N - \mathcal{E}_{N-1}),$$
and further, in view of \ref{pr_stima2}, \ref{Fbound3K}:
\begin{equation*}
\begin{split}
\|\tilde{\mathcal{D}}\|_0 & \leq  \|A-A_0\|_0 + \|\mathcal{E}_N -
\mathcal{E}_{N-1}\|_0 \\ & \leq
C\Big(l^\beta\|A\|_{0,\beta} + \frac{\|\mathcal{D}\|_0}{(\lambda l)^N}\Big) = 
C\Big(\frac{\|A\|_{0,\beta}}{M^\beta}  \|\mathcal{D}\|_0^{\beta/2} +
\frac{\|\mathcal{D}\|_0}{\sigma}\Big).
\end{split}
\end{equation*}
The proof is done.
\endproof

\section{The Nash-Kuiper scheme in $\mathcal{C}^{1,\alpha}$}\label{sec4}

To perform induction on stages we need the following argument, 
similar to \cite[Theorem 1.1]{CDS}:

\begin{theorem}\label{th_NashKuiHol}
Let $\omega\subset\R^d$ be an open, bounded domain 
and let $k\geq 1$ and $\gamma>0$ be such that the statement of Theorem \ref{thm_stage}
holds true with $\gamma$ replacing the exponent $d_*/k$ in
\ref{Abound2}, provided that $\sigma>\sigma_0$ where $\sigma_0>1$
depends only on $\omega$ and $\gamma$. Then we have the following.  
For every $v\in\mathcal{C}^2(\bar\omega,\R^k)$, $w\in\mathcal{C}^2(\bar\omega,\R^d)$,
$A\in\mathcal{C}^{0,\beta}(\bar\omega, \R^{d\times d}_\sym)$, such that:
$$\mathcal{D}=A-\big(\frac{1}{2}(\nabla v)^T\nabla v + \sym\nabla
w\big) \quad\mbox{ satisfies } \quad 0<\|\mathcal{D}\|_0\leq 1,$$
and for every $\alpha$ in the range:
\begin{equation}\label{rangeAl}
0< \alpha <\min\Big\{\frac{\beta}{2},\frac{1}{1+2\gamma}\Big\},
\end{equation}
there exist $\tilde v\in\mathcal{C}^{1,\alpha}(\bar\omega,\R^k)$ and
$\tilde w\in\mathcal{C}^{1,\alpha}(\bar\omega,\R^d)$ with the following properties:
\begin{align*}
& \|\tilde v - v\|_1\leq C\|\mathcal{D}\|_0^{1/2}, \quad \|\tilde w -
w\|_1\leq C\|\mathcal{D}\|_0^{1/2}(1+\|\nabla v\|_0),
\tag*{(\theequation)$_1$}\refstepcounter{equation} \label{Hbound1}\vspace{1mm}\\
& A-\big(\frac{1}{2}(\nabla \tilde v)^T\nabla \tilde v + \sym\nabla
\tilde w\big) =0 \quad\mbox{ in }\; \bar\omega, \tag*{(\theequation)$_2$}\label{Hbound2} 
\end{align*}
%\end{equation}
where the constants $C$ depend only on $d, k$ and $\omega$.
\end{theorem}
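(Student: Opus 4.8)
\emph{Proof proposal.} The idea is the classical Nash--Kuiper iteration on ``stages'': beginning from $(v,w)$, one repeatedly applies the $\gamma$-version of Theorem \ref{thm_stage} along a geometric schedule of deficits, and controls the $\mathcal{C}^{1,\alpha}$-norm of the limit by interpolating between the $\mathcal{C}^1$ increments and the $\mathcal{C}^2$ bounds. Concretely, I would fix a large parameter $\kappa>1$ (its size determined at the very end, in terms of $\omega,\gamma$ and $\alpha$), set $v_0=v$, $w_0=w$, $\mathcal{D}_0=\mathcal{D}$ --- no initial mollification is needed since $v,w\in\mathcal{C}^2$ --- and prescribe $\delta_q=\|\mathcal{D}\|_0\,\kappa^{-q}$ together with the constant frequency $\sigma_q\equiv C_0\kappa$, where $C_0$ is chosen so that $\sigma_q\ge\sigma_0$ and the constant $C$ in \ref{Abound3} obeys $C\,\delta_q/\sigma_q\le\frac12\delta_{q+1}$. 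The amplitude $M_q$ is defined inductively as a fixed large multiple of $\max\{\|v_q\|_2,\|w_q\|_2,1\}$ that is also large enough to force $C\|A\|_{0,\beta}M_q^{-\beta}\delta_q^{\beta/2}\le\frac12\delta_{q+1}$. Applying Theorem \ref{thm_stage} (with $\gamma$ replacing $d_*/k$) to $(v_q,w_q,A)$ with parameters $(M_q,\sigma_q)$ produces $(v_{q+1},w_{q+1})\in\mathcal{C}^2$ whose deficit satisfies $\|\mathcal{D}_{q+1}\|_0\le\delta_{q+1}$, so inductively $0\le\|\mathcal{D}_q\|_0\le\delta_q\to0$ (if some $\mathcal{D}_q$ vanishes identically, stop and take $\tilde v=v_q,\ \tilde w=w_q$). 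Since \ref{Abound2} gives $\|v_{q+1}\|_2,\|w_{q+1}\|_2\lesssim M_q\sigma_q^{\gamma}=M_q(C_0\kappa)^{\gamma}$, the recursion for $M_q$ closes with $M_q\lesssim\bar M_0 A^q$, where $A\sim\kappa^{\max\{\gamma,\ 1/\beta-1/2\}}$ and $\bar M_0$ depends only on $d,k,\omega$ and the data $v,w,A$.

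For convergence, \ref{Abound1} yields $\|v_{q+1}-v_q\|_1\le C\delta_q^{1/2}$ and $\|w_{q+1}-w_q\|_1\le C\delta_q^{1/2}(1+\|\nabla v_q\|_0)$. Summing the geometric series $\sum_q\delta_q^{1/2}=\|\mathcal{D}\|_0^{1/2}/(1-\kappa^{-1/2})\le 2\|\mathcal{D}\|_0^{1/2}$ (for $\kappa\ge4$) shows $v_q\to\tilde v$, $w_q\to\tilde w$ in $\mathcal{C}^1$ with $\|\tilde v-v\|_1\le C\|\mathcal{D}\|_0^{1/2}$; moreover $\|\nabla v_q\|_0\le\|\nabla v\|_0+C\|\mathcal{D}\|_0^{1/2}\le C(1+\|\nabla v\|_0)$ since $\|\mathcal{D}\|_0\le1$, whence $\|\tilde w-w\|_1\le C\|\mathcal{D}\|_0^{1/2}(1+\|\nabla v\|_0)$, with constants depending only on $d,k,\omega$. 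This is \ref{Hbound1}; passing to the limit in the definition of $\mathcal{D}_q$ and using $\|\mathcal{D}_q\|_0\to0$ gives \ref{Hbound2}.

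For the Hölder bound I would interpolate, $\|f\|_{1,\alpha}\le C\|f\|_1^{1-\alpha}\|f\|_2^{\alpha}$ applied to $f=v_{q+1}-v_q$ (and to $w_{q+1}-w_q$), to get $\|v_{q+1}-v_q\|_{1,\alpha}\le C\delta_q^{(1-\alpha)/2}\big(M_q\sigma_q^{\gamma}\big)^{\alpha}$. Inserting $\delta_q=\|\mathcal{D}\|_0\kappa^{-q}$ and $M_q\sigma_q^{\gamma}\lesssim A^q$, the $q$-th term is bounded by a data-dependent constant times $\big(\kappa^{\,\alpha\max\{\gamma,\,1/\beta-1/2\}-(1-\alpha)/2}\big)^{q}$. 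The exponent of $\kappa$ here is strictly negative if and only if $\alpha<\min\{\beta/2,\ \frac{1}{1+2\gamma}\}$ --- exactly the range (\ref{rangeAl}) --- so, enlarging $\kappa$ (depending on $\alpha$) to absorb the lower-order factors, the series $\sum_q\|v_{q+1}-v_q\|_{1,\alpha}$ converges geometrically. Since $v_0=v\in\mathcal{C}^2\subset\mathcal{C}^{1,\alpha}$ and the Hölder seminorm of the gradient is lower semicontinuous under uniform convergence, the limit $\tilde v$ lies in $\mathcal{C}^{1,\alpha}(\bar\omega,\R^k)$, and likewise $\tilde w\in\mathcal{C}^{1,\alpha}(\bar\omega,\R^d)$; note that enlarging $\kappa$ only improves $1/(1-\kappa^{-1/2})\le2$, so the $\mathcal{C}^1$ constants in \ref{Hbound1} stay independent of $\alpha$.

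The main obstacle is precisely the parameter bookkeeping above: verifying that a \emph{single} geometric schedule $(\delta_q,M_q,\sigma_q)$ can be arranged so that (i) the deficit recursion \ref{Abound3} closes, (ii) $M_q$ dominates $\|v_q\|_2$ and $\|w_q\|_2$ as forced by \ref{Abound2}, and (iii) the interpolated Hölder increments are summable, and that requirement (iii) is compatible with (i)--(ii) on exactly the interval $\alpha<\min\{\beta/2,1/(1+2\gamma)\}$. The remaining points --- existence and $\mathcal{C}^1$-closedness of the limit, passing to the limit in the deficit, and the standard interpolation and lower-semicontinuity arguments --- are routine.
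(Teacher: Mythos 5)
Your proposal is correct and follows essentially the same route as the paper: a Nash--Kuiper iteration on stages with a fixed frequency per stage, geometric decay of the deficit, closure of the $M_q$-recursion against \ref{Abound2} and \ref{Abound3}, and the interpolation $\|\cdot\|_{1,\alpha}\leq C\|\cdot\|_1^{1-\alpha}\|\cdot\|_2^{\alpha}$ to sum the H\"older increments. The only (cosmetic) difference is in the bookkeeping: the paper introduces an auxiliary decay exponent $\delta\in\big(\tfrac{2\gamma\alpha}{1-\alpha},\min\{1,\tfrac{2\gamma\beta}{2-\beta}\}\big)$ so that the deficit decays like $\sigma^{-\delta q}$ while $M_q$ grows only like $\sigma^{\gamma q}$, whereas you take full decay $\kappa^{-q}$ and let $M_q$ grow like $\kappa^{q\max\{\gamma,\,1/\beta-1/2\}}$ --- both schedules produce exactly the range $\alpha<\min\{\beta/2,\tfrac{1}{1+2\gamma}\}$.
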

\begin{proof}
{\bf 1.} Because of the assumption (\ref{rangeAl}), there exists an exponent:
\begin{equation}\label{exp_del}
\frac{2\gamma\alpha}{(1-\alpha)}< \delta < \min\Big\{1,\frac{2\gamma\beta}{(2-\beta)}\Big\}.
\end{equation} 
We let $\sigma >\sigma_0$ be a sufficiently large constant, in function of
$\delta,\alpha,\gamma$, $\|\nabla v\|_0$ and 
all constants $C$ in the assumed assertions of Theorem
\ref{thm_stage} (these constants depend only in $d,k, \omega$). 

We further set $v_0=v$, $w_0=w$, $\mathcal{D}_0=\mathcal{D}$, and take $M_0\geq \max\{\|v_0\|_2,
\|w_0\|_2,1\}$ that is again sufficiently large, now in function of
$\|A\|_{0,\beta}\|\mathcal{D}\|_0^{\beta/2-1} \sigma^\delta$ and
constants $C$ indicated before.
By successive applications of Theorem \ref{thm_stage} with the chosen $\sigma$ 
and constants $\{M_i\geq 1\}_{i=1}^\infty$ in:
$$M_i = \Big(\tilde C (1+\|\nabla v\|_0) \sigma^\gamma\Big)^iM_0$$
where $\tilde C>1$ is again some large constant (in function of
the aforementioned $C$),
we obtain sequences $\{v_i\in \mathcal{C}^2(\bar\omega,\R^k)\}_{i=1}^\infty$, 
$\{w_i\in\mathcal{C}^2(\bar\omega,\R^d)\}_{i=1}^\infty$ and the related
deficits $\{\mathcal{D}_i \in \mathcal{C}^0(\bar\omega,\R^{d\times d}_\sym)\}_{i=1}^\infty$:
$$\mathcal{D}_i = A-\big(\frac{1}{2}(\nabla v_i)^T\nabla v_i + \sym\nabla w_i\big).$$
We see that, as long as there holds:
\begin{equation}\label{indu_assuD}
0<\|\mathcal{D}_i\|_0\leq 1 \quad \mbox{ and }\quad M_i\geq \max\{\|v_i\|_2,\|w_i\|_2,1\},
\end{equation}
we have, with the constants $C$ depending only on $d, k$ and $\omega$:
\begin{align*}
& \|v_{i+1} - v_i\|_1\leq C\|\mathcal{D}_i\|_0^{1/2}, \quad \|w_{i+1} -
w\|_1\leq C\|\mathcal{D}_i\|_0^{1/2}(1+\|\nabla v_i\|_0),
\tag*{(\theequation)$_1$}\refstepcounter{equation} \label{Bbound1}\vspace{1mm}\\
& \|v_{i+1}\|_2\leq CM_i\sigma^\gamma,\quad \|w_{i+1}\|_2\leq CM_i\sigma^\gamma(1+\|\nabla v_i\|_0),
\tag*{(\theequation)$_2$}\label{Bbound2} \\ 
& \|{\mathcal{D}}_{i+1}\|_0\leq C\Big(\frac{\|A\|_{0,\beta}}{M_i^\beta} \|\mathcal{D}_i\|_0^{\beta/2} +
\frac{\|\mathcal{D}_i\|_0}{\sigma}\Big). \tag*{(\theequation)$_3$} \label{Bbound3}
\end{align*}
%\end{equation}
Below, we inductively validate  (\ref{indu_assuD}) for all $i\geq 0$,
and in fact we show that:
\begin{equation}\label{import_bd}
\|\mathcal{D}_i\|_0\leq \frac{1}{\sigma^{\delta i}}\|\mathcal{D}\|_0\quad
\mbox{ for all } \; i=0\ldots\infty.
\end{equation}

\smallskip

Before doing so, note that (\ref{import_bd}) actually implies both
statements in (\ref{indu_assuD}). Indeed, by \ref{Bbound2}:
\begin{equation*}
\begin{split}
& \|v_{i+1}\|_2\leq CM_i\sigma^\gamma\leq M_{i+1},\vspace{1mm}\\
& \|w_{i+1}\|_2\leq CM_i\sigma^\gamma(1+\|\nabla v_i\|_0) 
\leq CM_i\sigma^\gamma(1+2C +\|\nabla v\|_0) \leq M_{i+1},
\end{split}
\end{equation*}
since by the first bound in \ref{Bbound1} and (\ref{import_bd}) there
follows, provided that $\sigma^{\delta/2}\geq 2$:
\begin{equation}\label{pomoc_v}
\begin{split}
\|\nabla v_i\|_0 & \leq \|\nabla v\|_0 + \sum_{j=0}^{i-1}\|\nabla v_{j+1}-\nabla v_j\|_0
\leq \|\nabla v\|_0 + C\sum_{j=0}^{i-1}\|\mathcal{D}_j\|_0^{1/2} \\ & \leq 
\|\nabla v\|_0 + C\sum_{j=0}^{\infty}\frac{\|\mathcal{D}\|_0^{1/2}}{\sigma^{\delta j/2}} 
= \|\nabla v\|_0 + \frac{C}{1-\sigma^{-\delta/2}} \|\mathcal{D}\|_0^{1/2}
\\ & \leq \|\nabla v\|_0 + {2C} \|\mathcal{D}\|_0^{1/2} \leq 2C +\|\nabla v\|_0.
\end{split}
\end{equation}

\medskip

{\bf 2.} Clearly (\ref{import_bd}) holds at $i=0$. To prove
it at $(i+1)$, use \ref{Bbound3} and the induction assumption:
\begin{equation}\label{dd}
\|\mathcal{D}_{i+1}\|_0\leq C\Big( \frac{\|A\|_{0,\beta}
  \|\mathcal{D}\|_0^{\beta/2}}{M_i^\beta\sigma^{\delta i\beta/2}} +
\frac{\|\mathcal{D}\|_0}{\sigma^{\delta i +1}}\Big) = \frac{\|\mathcal{D}\|_{0}}{\sigma^{\delta(i+1)}}
\Big(\frac{C\|A\|_{0,\beta} \|\mathcal{D}\|_0^{\beta/2-1}}{M_i^\beta\sigma^{\delta i\beta/2 - \delta(i+1)}} +
\frac{C}{\sigma^{1-\delta}}\Big),
\end{equation}
and check that both terms in parentheses in the right hand side above
are not greater than $1/2$. For the second term, this is readily implied by
taking $\sigma$ large enough that $\sigma^{1-\delta}\geq 2C$, in view
of $1-\delta>0$ in (\ref{exp_del}). For the first term, we note that
$$\frac{C\|A\|_{0,\beta} \|\mathcal{D}\|_0^{\beta/2-1}}{M_i^\beta\sigma^{\delta i\beta/2 -  \delta(i+1)}} 
\leq \frac{C\|A\|_{0,\beta} \|\mathcal{D}\|_0^{\beta/2-1}\sigma^\delta}{M_0^\beta}
\cdot \sigma^{\delta i-\gamma\beta i - \delta\beta i/2} \leq \frac{C\|A\|_{0,\beta}
    \|\mathcal{D}\|_0^{\beta/2-1}\sigma^\delta}{M_0^\beta},$$  
since the exponent $\delta i-\gamma\beta i - \delta\beta i/2$ is
non-positive, due to $\delta < \frac{2\gamma\beta}{2-\beta}$ in (\ref{exp_del}):
$$\delta i-\gamma\beta i - \delta\beta i/2 =\frac{i}{2}
\big(\delta(2 - \beta) - 2\gamma\beta \big) \leq 0 \quad\mbox{ for all }\; i\geq 0.$$
In conclusion, the expression in parentheses in (\ref{dd}) is bounded
by $1$, provided $M_0$ has been chosen sufficiently large. This ends the proof of (\ref{import_bd}).

\medskip

{\bf 3.} From \ref{Bbound1}, (\ref{pomoc_v}) and (\ref{import_bd}), it follows that for all $i=0\ldots \infty$:
\begin{equation*}
\begin{split}
& \|v_{i+1} - v_i\|_1\leq \frac{C}{\sigma^{\delta
    i/2}}\|\mathcal{D}\|_0^{1/2}, \qquad \|w_{i+1} - w_i\|_1\leq \frac{C}{\sigma^{\delta
    i/2}}\|\mathcal{D}\|_0^{1/2} \big(1+ \|\nabla v\|_0\big),
\end{split}
\end{equation*}
Hence, both sequences $\{v_i\}_{i=1}^\infty$, 
$\{w_i\}_{i=1}^\infty$ are Cauchy in $\mathcal{C}^1(\bar\omega)$ and as
such they converge to the limit fields, respectively: 
$$\tilde v\in\mathcal{C}^1(\omega,\R^k), \qquad \tilde w\in\mathcal{C}^1(\omega,\R^d)$$ 
that satisfy \ref{Hbound1} and \ref{Hbound2}, in virtue of
$\|\mathcal{D}_i\|_0\to 0$ as $i\to\infty$. 

\smallskip

It remains to show that $\tilde v$ and $\tilde w$ are
$\mathcal{C}^{1,\alpha}$-regular. To this end, we use the estimate:
\begin{equation*}
\begin{split}
\|v_{i+1} - v_i\|_2 + \|w_{i+1} - w_i\|_2 & \leq {C}M_i{\sigma^\gamma}
\big(1+ \|\nabla v\|_0\big) \\ & \leq C \Big(\tilde C (1+\|\nabla
v\|_0)\sigma^\gamma\Big)^{i+1} M_0, 
\end{split}
\end{equation*}
resulting from \ref{Bbound2} and (\ref{pomoc_v}), in the
interpolation inequality $\|\cdot\|_{1,\alpha}\leq C \|\cdot\|_{1}^{1-\alpha}\|\cdot\|_{2}^\alpha$:
\begin{equation*}
\begin{split}
\|v_{i+1} -& v_i\|_{1,\alpha}  + \|w_{i+1} - w_i\|_{1,\alpha} \\ & \leq {C}
\|\mathcal{D}\|_0^{(1-\alpha)/2} \Big(\tilde C (1+\|\nabla
v\|_0)\Big)^{(i+1)\alpha+(1-\alpha)} M_0^\alpha \sigma^{\alpha\gamma(i+1) - \delta i (1-\alpha)/2}
\\ & = {C} \tilde C \cdot M_0^\alpha\|\mathcal{D}\|_0^{(1-\alpha)/2} \big(1+\|\nabla
v\|_0\big) \sigma^{\alpha\gamma} \cdot \Big(\frac{\tilde C^\alpha (1+\|\nabla v\|_0)^{\alpha}}{
\sigma^{\delta (1-\alpha)/2-\alpha\gamma}}\Big)^i.
\end{split}
\end{equation*}
Since the exponent $\delta (1-\alpha)/2-\alpha\gamma$ is positive, in
view of $\delta>\frac{2\gamma\alpha}{1-\alpha}$ in (\ref{exp_del}), we
see that both sequences $\{v_i\}_{i=1}^\infty$, $\{w_i\}_{i=1}^\infty$ are Cauchy in
$\mathcal{C}^{1,\alpha}(\bar\omega)$, provided that $\sigma$ is
sufficiently large to have:
$$\frac{\tilde C^\alpha (1+\|\nabla v\|_0)^{\alpha}}{
\sigma^{\delta (1-\alpha)/2-\alpha\gamma}} <1.$$
In conclusion, $\tilde v\in\mathcal{C}^{1,\alpha}(\omega,\R^k)$ and
$\tilde w\in\mathcal{C}^{1,\alpha}(\omega,\R^d)$ as claimed. The proof
is done.
\end{proof}

\bigskip

Taking now $\gamma= d_*/k$ or taking an arbitrary $\gamma<1$ as
guaranteed by Theorem \ref{thm_stage} and Theorem \ref{thm_stageK}, respectively,
Theorem \ref{th_NashKuiHol} implies:

\begin{corollary}\label{th_NKH}
For every $v\in\mathcal{C}^2(\bar\omega,\R^k)$, $w\in\mathcal{C}^2(\bar\omega,\R^d)$ and
$A\in\mathcal{C}^{0,\beta}(\bar\omega, \R^{d\times d}_\sym)$ defined
on an open, bounded domain $\omega\subset\R^d$, and such that:
$$\mathcal{D}=A-\big(\frac{1}{2}(\nabla v)^T\nabla v + \sym\nabla
w\big) \quad\mbox{ satisfies } \quad 0<\|\mathcal{D}\|_0\leq 1,$$
and for every exponent $\alpha$ in the range:
\begin{equation*}
0< \alpha <\min\Big\{\frac{\beta}{2},\frac{1}{1+2d_*/k}\Big\},
\quad\mbox{ or }\;
0< \alpha <\min\Big\{\frac{\beta}{2},1\Big\} \mbox{ when $k\geq d(d+1)$},
\end{equation*}
there exist $\tilde v\in\mathcal{C}^{1,\alpha}(\bar\omega,\R^k)$ and
$\tilde w\in\mathcal{C}^{1,\alpha}(\bar\omega,\R^d)$ with the following properties:
\begin{align*}
& \|\tilde v - v\|_1\leq C\|\mathcal{D}\|_0^{1/2}, \quad \|\tilde w -
w\|_1\leq C\|\mathcal{D}\|_0^{1/2}(1+\|\nabla v\|_0),
\tag*{(\theequation)$_1$}\refstepcounter{equation} \label{HHbound1}\vspace{1mm}\\
& A-\big(\frac{1}{2}(\nabla \tilde v)^T\nabla \tilde v + \sym\nabla
\tilde w\big) =0 \quad\mbox{ in }\; \bar\omega, \tag*{(\theequation)$_2$}\label{HHbound2} 
\end{align*}
%\end{equation}
where the constants $C$ depend only on $d, k$ and $\omega$.
\end{corollary}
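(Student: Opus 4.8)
The plan is to obtain the Corollary as a direct consequence of Theorem \ref{th_NashKuiHol}, applied in two regimes with two different choices of the ``stage exponent'' $\gamma$. Recall that Theorem \ref{th_NashKuiHol} is conditional: it requires that the conclusion of Theorem \ref{thm_stage} hold with some $\gamma>0$ in place of $d_*/k$ in \ref{Abound2}, for every $\sigma$ past a threshold $\sigma_0=\sigma_0(\omega,\gamma)$. Both Theorem \ref{thm_stage} and Theorem \ref{thm_stageK} supply exactly such a stage statement, for $\gamma=d_*/k$ and for an arbitrary $\gamma>0$, respectively; moreover the regularity and smallness hypotheses on $v,w,A,\mathcal{D}$ in the Corollary are verbatim those of Theorem \ref{th_NashKuiHol}, so no preliminary reductions are needed.

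For the first assertion (general $k$), I would invoke Theorem \ref{thm_stage} directly: it \emph{is} the required stage statement with $\gamma=d_*/k$ and with $\sigma_0=1$, since the theorem only asks for $\sigma\geq 1$. Hence the hypothesis of Theorem \ref{th_NashKuiHol} is satisfied with this $\gamma$, and because the range $0<\alpha<\min\{\beta/2,\,1/(1+2d_*/k)\}$ is precisely the range (\ref{rangeAl}) for $\gamma=d_*/k$, Theorem \ref{th_NashKuiHol} immediately produces $\tilde v,\tilde w\in\mathcal{C}^{1,\alpha}$ satisfying \ref{Hbound1}--\ref{Hbound2}, which is exactly \ref{HHbound1}--\ref{HHbound2}, with constants depending only on $d,k,\omega$.

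For the second assertion, fix $k\geq d(d+1)=2d_*$ and an exponent $\alpha$ with $0<\alpha<\min\{\beta/2,1\}$. Since $\alpha<1$ and $\tfrac{1}{1+2\gamma}\to 1$ as $\gamma\to 0^+$, I can choose $\gamma\in(0,1)$, depending only on $\alpha$, so small that $\alpha<\tfrac{1}{1+2\gamma}$. Applying Theorem \ref{thm_stageK} with $\delta:=\gamma$ then furnishes the stage statement required by the hypothesis of Theorem \ref{th_NashKuiHol} for this $\gamma$, with $\sigma_0=\sigma_0(\omega,\gamma)>1$. As $\alpha$ now lies in the range (\ref{rangeAl}) for this $\gamma$, Theorem \ref{th_NashKuiHol} again delivers the desired $\tilde v,\tilde w\in\mathcal{C}^{1,\alpha}$ with the bounds \ref{HHbound1}--\ref{HHbound2}; inspecting the proof of Theorem \ref{th_NashKuiHol} shows the output constants come from summing the geometric series in \ref{Bbound1} and thus depend only on $d,k,\omega$.

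I do not expect any substantive obstacle: the entire construction (the Nash--Kuiper induction on stages) is packaged inside Theorem \ref{th_NashKuiHol}, and the two ``stage'' inputs are already established. The only point deserving attention is that Theorem \ref{thm_stageK} does not yield the limiting value $\gamma=0$, so one cannot reach $\alpha=1$ itself, nor use a single fixed $\gamma$ valid for all $\alpha<1$ simultaneously; instead $\gamma=\gamma(\alpha)$ is selected after the admissible $\alpha$ is given, which is legitimate because the Corollary quantifies over $\alpha$ first.
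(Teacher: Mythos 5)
Your proposal is correct and follows exactly the paper's own route: the Corollary is obtained by feeding Theorem \ref{thm_stage} (with $\gamma=d_*/k$) and Theorem \ref{thm_stageK} (with $\gamma=\delta$ chosen small after $\alpha$ is fixed, so that $\alpha<\tfrac{1}{1+2\gamma}$) into Theorem \ref{th_NashKuiHol}. Your remark that $\gamma$ must depend on $\alpha$ in the large-codimension case is precisely the point the paper leaves implicit.
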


\section{Proofs of Theorem \ref{th_final} and Theorem \ref{th_finalK}}\label{sec5}

The final auxiliary result that we need, is a combination of the
local decomposition into ``primitive metrics'' with a partition of unity -
type statement from \cite[Lemma 3.3]{Laszlo}:

\begin{lemma}\label{lem_met_deco}
Given the dimension $d\geq 1$, there exists a constant $N_0$ and
sequences of unit vectors $\{\eta_i\in\R^d\}_{i=1}^\infty$ 
and nonnegative functions
$\{\varphi_i\in\mathcal{C}^\infty_c(\R^{d\times d}_{\sym, >},\R)\}_{i=1}^\infty$, such that:
$$ A = \sum_{i=1}^{\infty} \varphi_i(A)^2 \eta_i\otimes\eta_i \quad
\mbox{ for all } \; A\in \R^{d\times d}_{\sym, >},$$
and such that:
\begin{itemize} 
\item[(i)] at most $N_0$ terms in the above sum are nonzero,
\item[(ii)] every compact set of matrices $K\subset  \R^{d\times d}_{\sym, >}$ 
induces a finite set of indices $J(K)\subset\mathbb{N}$, such that
$\varphi_i(A)=0$ for all $A\in K$ and all $i\not\in J(K)$.
\end{itemize}
\end{lemma}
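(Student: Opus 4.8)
The plan is to promote the local decomposition of Lemma~\ref{lem_dec_def}, valid only on the ball $B(\Id_d,r_0)$, to a global one on the open cone $\R^{d\times d}_{\sym,>}$ by a linear change of variables followed by a partition of unity. First I would localize: for each fixed $A_0\in\R^{d\times d}_{\sym,>}$, the linear map $A\mapsto A_0^{-1/2}AA_0^{-1/2}$ sends $A_0$ to $\Id_d$, so $U_{A_0}:=\{A\in\R^{d\times d}_{\sym,>}:\ \|A_0^{-1/2}AA_0^{-1/2}-\Id_d\|<r_0\}$ is an open neighbourhood of $A_0$, and on it Lemma~\ref{lem_dec_def} applied to the symmetric matrix $A_0^{-1/2}AA_0^{-1/2}$, after conjugating the resulting identity by $A_0^{1/2}$ and normalizing the vectors $A_0^{1/2}\eta_i$, yields
\[
A=\sum_{i=1}^{d_*}b_i^{A_0}(A)\,\zeta_i^{A_0}\otimes\zeta_i^{A_0},\qquad \zeta_i^{A_0}:=\frac{A_0^{1/2}\eta_i}{|A_0^{1/2}\eta_i|}\in\mathbb{S}^{d-1},
\]
where $b_i^{A_0}(A):=|A_0^{1/2}\eta_i|^2\,\bar a_i\big(A_0^{-1/2}AA_0^{-1/2}\big)$ is an affine, hence $\mathcal{C}^\infty$, function of $A$, bounded below by $r_0|A_0^{1/2}\eta_i|^2>0$ throughout $U_{A_0}$.

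Next I would globalize. Since $\R^{d\times d}_{\sym,>}$ is an open subset of the $d_*$-dimensional space $\R^{d\times d}_\sym$, it is paracompact, metrizable and of covering dimension $d_*$, so the cover $\{U_{A_0}\}_{A_0}$ admits a countable, locally finite refinement $\{V_j\}_{j\in\mathbb{N}}$ in which no point lies in more than $d_*+1$ of the $V_j$; I pick base points with $V_j\subset U_{A_{(j)}}$. I then take a partition of unity $\{\rho_j\}$ subordinate to $\{V_j\}$ and replace it by the \emph{squared} partition of unity $\psi_j:=\rho_j\big/\big(\sum_k\rho_k^2\big)^{1/2}$, whose denominator is $\mathcal{C}^\infty$ and strictly positive because the sum is locally finite and at each point some $\rho_k>0$; thus $\psi_j\in\mathcal{C}^\infty_c(V_j)$ and $\sum_j\psi_j^2\equiv1$. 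Re-indexing the pairs $(j,i)$ with $j\in\mathbb{N}$, $i\in\{1,\dots,d_*\}$ into a single sequence, I set $\eta_{(j,i)}:=\zeta_i^{A_{(j)}}$ and $\varphi_{(j,i)}:=\psi_j\cdot\big(b_i^{A_{(j)}}\big)^{1/2}$; since $b_i^{A_{(j)}}$ is $\mathcal{C}^\infty$ and strictly positive on $U_{A_{(j)}}\supset\mathrm{supp}\,\psi_j$, each $\varphi_{(j,i)}$ extended by zero belongs to $\mathcal{C}^\infty_c(\R^{d\times d}_{\sym,>})$, and
\[
\sum_{(j,i)}\varphi_{(j,i)}(A)^2\,\eta_{(j,i)}\otimes\eta_{(j,i)}=\sum_j\psi_j(A)^2\sum_{i=1}^{d_*}b_i^{A_{(j)}}(A)\,\zeta_i^{A_{(j)}}\otimes\zeta_i^{A_{(j)}}=\sum_j\psi_j(A)^2\,A=A.
\]
Finally I would read off the two extra properties: (i) holds with $N_0=d_*(d_*+1)$, as at most $d_*+1$ of the $V_j$ contain a given $A$ and each contributes $d_*$ terms; (ii) holds with $J(K)=\{(j,i):\ \mathrm{supp}\,\psi_j\cap K\ne\emptyset\}$, a finite set by local finiteness of $\{V_j\}$.

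The main obstacle I anticipate is the regularity bookkeeping in the globalization step, namely ensuring the $\varphi_i$ are genuinely $\mathcal{C}^\infty_c$ and not merely continuous, despite the square root implicit in extracting rank-one coefficients of $A$. This is what forces both the use of a squared partition of unity $\sum_j\psi_j^2\equiv1$ and the exploitation of the strict lower bound $b_i^{A_0}\ge r_0|A_0^{1/2}\eta_i|^2>0$ coming from Lemma~\ref{lem_dec_def}, which together make $(b_i^{A_{(j)}})^{1/2}$ smooth where it matters. The uniform bound $N_0$ in (i) is the second point needing care, and is handled by using the finite dimension $d_*$ of $\R^{d\times d}_\sym$ to pass to a refinement of bounded multiplicity.
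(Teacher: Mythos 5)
Your proof is correct and follows essentially the argument the paper invokes by citing \cite[Lemma 3.3]{Laszlo}: conjugate the local decomposition of Lemma \ref{lem_dec_def} by $A_0^{1/2}$ to cover the whole cone, then glue with a squared partition of unity over a countable, bounded-multiplicity refinement. The only detail worth making explicit is that the refinement $\{V_j\}$ should be taken precompact in $\R^{d\times d}_{\sym,>}$ (closures compact and contained in the open cone), so that the functions $\varphi_{(j,i)}=\psi_j\big(b_i^{A_{(j)}}\big)^{1/2}$ are genuinely in $\mathcal{C}^\infty_c(\R^{d\times d}_{\sym,>},\R)$ rather than merely smooth with closed support.
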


\medskip

\noindent Equipped with Lemma \ref{lem_met_deco} and the ``step''
construction in Lemma \ref{lem_step}, one easily deduces the
deficit decrease - approximation result in $\mathcal{C}^1$, which is
the multidimensional version of the basic ``stage'' construction in \cite[Proposition 3.2]{lewpak_MA}:

\begin{theorem}\label{th_approx_nonlocal}
Let $\omega\subset\R^d$ be an open, bounded domain. Given
two vector fields $v\in\mathcal{C}^\infty(\bar\omega,\R^k)$, $w\in\mathcal{C}^\infty(\bar\omega,\R^d)$ and
a matrix field $A\in\mathcal{C}^\infty(\bar\omega,\R^{d\times d}_\sym)$, assume that:
$$\mathcal{D}=A-\big(\frac{1}{2}(\nabla v)^T\nabla v + \sym\nabla
w\big) \quad\mbox{ satisfies } \quad \mathcal{D}>c\,\Id_d \; \mbox{ on }
\; \bar\omega$$
for some $c>0$, in the sense of matrix inequalities. Fix
$\epsilon>0$. Then, there exists $\tilde v\in\mathcal{C}^\infty(\bar \omega,\R^k)$ and
$\tilde w\in\mathcal{C}^\infty(\bar\omega,\R^d)$ such that, denoting:
$$\tilde{\mathcal{D}}=A -\big(\frac{1}{2}(\nabla \tilde v)^T\nabla \tilde v + \sym\nabla \tilde w\big), $$
the following holds with constants $C$ depending only on $d, k$ and $\omega$:
%\begin{equation}\label{stage_est}
\begin{align*}
& \|\tilde v - v\|_0\leq \epsilon, \quad \|\tilde w - w\|_0\leq \epsilon,
\tag*{(\theequation)$_1$}\refstepcounter{equation} \label{Cbound1}\vspace{1mm}\\ 
& \|\nabla (\tilde v-v)\|_0\leq C \|\mathcal{D}\|_0^{1/2}, \quad \|\nabla(\tilde w -
w)\|_0\leq C\|\mathcal{D}\|_0^{1/2}\big(\|\mathcal{D}\|_0^{1/2} +\|\nabla v\|_0\big),
\tag*{(\theequation)$_2$} \label{Cbound2}\vspace{1mm}\\
& \|\tilde{\mathcal{D}}\|_0\leq \epsilon\quad\mbox{ and } \quad
\tilde{\mathcal{D}}>\tilde c\,\Id_d \; \mbox{ for some $\tilde c>0$}. \tag*{(\theequation)$_3$} \label{Cbound3}
\end{align*}
%\end{equation}
\end{theorem}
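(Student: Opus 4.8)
The plan is to cancel not the whole deficit $\mathcal D$ but only the shifted field $\mathcal D' := \mathcal D - c_0\Id_d$, where $c_0 := \min\{c/2,\epsilon/2\}>0$; then $\mathcal D'>\tfrac c2\Id_d$ is still positive definite, while the uncancelled remainder $c_0\Id_d$ (up to the small accumulated step errors) will furnish the strict positivity $\tilde{\mathcal D}>\tilde c\,\Id_d$ in \ref{Cbound3}, and $c_0\le\epsilon/2$ will keep $\|\tilde{\mathcal D}\|_0\le\epsilon$. Since $\mathcal D'(\bar\omega)$ is a compact subset of $\R^{d\times d}_{\sym,>}$, Lemma \ref{lem_met_deco} applies: setting $a_i:=\varphi_i\circ\mathcal D'\in\mathcal C^\infty(\bar\omega,\R)$ and using property (ii) for the compact set $\mathcal D'(\bar\omega)$, only finitely many of the $a_i$ are not identically $0$ on $\bar\omega$, so after relabelling one has, on $\bar\omega$,
\[
\mathcal D' \;=\; \sum_{i=1}^{n} a_i^2\,\eta_i\otimes\eta_i,
\]
where at most $N_0$ of the summands are nonzero at any point $x\in\bar\omega$, and $\sum_{i=1}^n a_i(x)^2=\mathrm{tr}\,\mathcal D'(x)\le d\,\|\mathcal D\|_0$.

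Next I would apply the one-dimensional ``step'' of Lemma \ref{lem_step} $n$ times in succession. Put $v_0=v$, $w_0=w$, fix the unit codirection $E=e_1\in\R^k$, and for $i=1,\dots,n$ let $(v_i,w_i)$ be obtained from $(v_{i-1},w_{i-1})$ through (\ref{defi_per}) with amplitude $a_i$, direction $\eta_i$, codirection $E$, and a frequency $\lambda_i>0$ still to be chosen. The $\lambda_i$ are selected inductively: once $v_{i-1},w_{i-1},a_i$ are fixed, $\lambda_i$ is taken so large that the $\mathcal C^0$-norms of the $v$- and $w$-perturbations in (\ref{defi_per}) and of the error on the right-hand side of (\ref{step_err}) are each below a threshold $\delta_i>0$, where $\sum_{i=1}^n\delta_i$ can be made as small as we wish. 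Setting $\tilde v:=v_n$, $\tilde w:=w_n$ (no corrective linear term in $\tilde w$ is needed here, precisely because $\mathcal D$ is already positive definite), telescoping (\ref{step_err}) over $i=1,\dots,n$ and using $\sum_i a_i^2\eta_i\otimes\eta_i=\mathcal D'=A-\big(\tfrac12(\nabla v)^T\nabla v+\sym\nabla w\big)-c_0\Id_d$ gives
\[
\tilde{\mathcal D}\;=\;A-\Big(\tfrac12(\nabla\tilde v)^T\nabla\tilde v+\sym\nabla\tilde w\Big)\;=\;c_0\Id_d-\sum_{i=1}^{n}\mathrm{err}_i,
\]
where $\mathrm{err}_i$ is the error from (\ref{step_err}) incurred at step $i$, with $\|\mathrm{err}_i\|_0\le\delta_i$.

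From here the three assertions follow. Bound \ref{Cbound1} is immediate since $\|\tilde v-v\|_0,\|\tilde w-w\|_0\le\sum_i\delta_i$. For \ref{Cbound3}, $\|\tilde{\mathcal D}\|_0\le c_0+\sum_i\delta_i\le\epsilon$, and imposing in addition $\sum_i\delta_i$ small enough (relative to the norm-equivalence constant on $\R^{d\times d}_\sym$) yields $\tilde{\mathcal D}\ge\tfrac{c_0}{2}\Id_d=:\tilde c\,\Id_d>0$. The delicate estimate is the $\mathcal C^1$-bound \ref{Cbound2}. Differentiating the $v$-part of (\ref{defi_per}) and summing, $\nabla\tilde v-\nabla v=\sum_i\big(\tfrac1{\lambda_i}\Gamma(\lambda_i t_{\eta_i})E\otimes\nabla a_i+a_i\Gamma'(\lambda_i t_{\eta_i})E\otimes\eta_i\big)$: the decaying summands contribute at most $C\sum_i\delta_i$, while the non-decaying summands are bounded pointwise, using that at each $x$ at most $N_0$ of the $a_i(x)$ are nonzero together with Cauchy--Schwarz and $\sum_i a_i(x)^2=\mathrm{tr}\,\mathcal D'(x)$, by $\sum_{i:\,a_i(x)\ne0}C|a_i(x)|\le C N_0^{1/2}\big(\mathrm{tr}\,\mathcal D'(x)\big)^{1/2}\le C\|\mathcal D\|_0^{1/2}$. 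The estimate for $\nabla(\tilde w-w)$ is analogous, its non-decaying part at $x$ being $O\big(|a_i(x)|\,\|\nabla v_{i-1}\|_0+a_i(x)^2\big)$, where one uses additionally $\|\nabla v_{i-1}\|_0\le\|\nabla v\|_0+C\|\mathcal D\|_0^{1/2}$ --- itself the $v$-half of \ref{Cbound2} applied to the first $i-1$ steps.

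The only genuinely non-routine issue is preventing the accumulated $\mathcal C^1$-perturbations from acquiring a spurious factor of $n$ (the number of primitive directions needed to cover $\mathcal D'(\bar\omega)$, which is \emph{not} controlled by $\|\mathcal D\|_0$); this is exactly what forces the use of the uniform pointwise bound $N_0$ in Lemma \ref{lem_met_deco}(i) and the trace identity $\sum_i a_i^2=\mathrm{tr}\,\mathcal D'$ in place of a naive triangle inequality. The remaining bookkeeping --- the inductive choice of the frequencies $\lambda_i$, each depending on all data already built in the preceding steps, and the smallness budgeting $\{\delta_i\}$ --- is standard.
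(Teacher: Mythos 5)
Your argument is correct and follows essentially the same route as the paper's proof: decompose via Lemma \ref{lem_met_deco}, apply the ``step'' of Lemma \ref{lem_step} once per active primitive direction with frequencies chosen inductively large, and control the accumulated $\mathcal{C}^1$ perturbations through the pointwise bound $N_0$ of Lemma \ref{lem_met_deco}(i) together with Cauchy--Schwarz and the trace identity $\sum_i a_i^2=\mathrm{tr}\,\mathcal D'$. The only (immaterial) difference is how the strictly positive small remainder is arranged: you decompose the shifted field $\mathcal D-c_0\Id_d$ and leave $c_0\Id_d$ uncancelled, whereas the paper decomposes $\mathcal D$ itself into $\sum b_i^2\eta_i\otimes\eta_i$ and then dampens the amplitudes to $a_i=(1-\delta)^{1/2}b_i$, leaving $\delta\,\mathcal D$ as the remainder; both yield $\|\tilde{\mathcal D}\|_0\le\epsilon$ and $\tilde{\mathcal D}>\tilde c\,\Id_d$ once the step errors are budgeted suitably small.
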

\begin{proof}
{\bf 1.} Since $\mathcal{D}(\bar\omega)$ is a compact subset of $\R^{d\times d}_{\sym,>}$, 
Lemma \ref{lem_met_deco} yields a finite set of indices for which the decomposition into ``primitive matrices'' is
active. Without loss of generality these indices are
$\{1\ldots N\}$. Then, with the unit vectors $\{\eta_i\in\R^d\}_{i=1}^N$ and the nonnegative functions
$\{b_i\in\mathcal{C}^\infty(\bar\omega,\R)\}_{i=1}^N$ defined by $b_i(x)=\varphi_i(\mathcal{D}(x))$, there holds: 
$$\mathcal{D}(x) = \sum_{i=1}^N b_i(x)^2 \eta_i\otimes\eta_i \qquad
\mbox{ for all }\; x\in\bar\omega.$$
We now define the modified nonnegative amplitude functions $\{a_i\in\mathcal{C}^\infty(\bar\omega,\R)\}_{i=1}^N$ by:
$$a_i = (1-\delta)^{1/2}b_i \quad\mbox{on }\;\omega, \quad\mbox{ where
}\; \delta = \min\big\{\frac{1}{2}, \frac{\epsilon}{2\|\mathcal{D}\|_0}\big\}.$$
Observe that:
$$\mathcal{D}-\sum_{i=1}^Na_i^2 \eta_i\otimes\eta_i = \delta\mathcal{D} >\delta c\,\Id_d.$$

\smallskip

{\bf 2.} We set $v_1 = v$ and $w_1=w$. We then inductively define the
vector fields $\{v_i\in\mathcal{C}^\infty(\bar \omega,\R^k)\}_{i=1}^{N+1}$ and
$\{w_i\in\mathcal{C}^\infty(\bar\omega,\R^d)\}_{i=1}^{N+1}$ by applying
Lemma \ref{lem_step} to each consecutive pair $(v_i, w_i)$ with the
given unit vector $\eta_i$, an arbitrary unit vector $E\in\R^k$, the
given amplitude $a_i$ and a frequency $\lambda_i>0$ that is sufficiently large as
indicated below. We finally set:
$$\tilde v = v_{N+1},\quad\tilde w = w_{N+1}.$$
It is clear that by taking $\{\lambda_i\}_{i=1}^N$ 
large, one can ensure the validity of \ref{Cbound1}. Further, by (\ref{step_err}):
\begin{equation*}
\begin{split}
\tilde{\mathcal{D}}& = \mathcal{D} - \Big(\big(\frac{1}{2}(\nabla
v_{N+1})^T\nabla v_{N+1} + \sym\nabla w_{N+1}\big) -
\big(\frac{1}{2}(\nabla v_1)^T\nabla v_1 + \sym\nabla w_1\big)\Big)\\  
& = \Big(\mathcal{D} - \sum_{i=1}^Na_i^2\eta_i\otimes\eta_i\Big) \\ &
\qquad - \sum_{i=1}^{N}\Big( \big(\frac{1}{2}(\nabla v_{i+1})^T\nabla v_{i+1} +
\sym\nabla w_{i+1}\big) - \big(\frac{1}{2}(\nabla v_i)^T\nabla v_i + \sym\nabla w_i\big)
-a_i^2\eta_i\otimes\eta_i\Big) \\ & = \delta\mathcal{D} + \sum_{i=1}^{N}
O\Big(\frac{\|a_i\|_0\|\nabla^2v_i\|_0}{\lambda_i} + \frac{\|\nabla
  a_i\|^2 +\|a_i\|_0\|\nabla^2a\|_0}{\lambda_i^2}\Big),
\end{split}
\end{equation*}
which implies \ref{Cbound3} with $\tilde c = \delta c/2$, provided
that $\{\lambda_i\}_{i=1}^N$ are sufficiently large.

\medskip

{\bf 3.} It remains to check \ref{Cbound2}. By Lemma
\ref{lem_met_deco} (i), we get for each $x\in\bar\omega$:
$$0\leq \sum_{i=1}^Na_i(x) \leq \sum_{i=1}^Nb_i(x) \leq N_0^{1/2}
\big(\sum_{i=1}^Nb_i(x)^2\big)^{1/2}  =   N_0^{1/2}
\big(\mathrm{Trace}\,\mathcal{D}(x)\big)^{1/2}
\leq C N_0^{1/2} \|\mathcal{D}\|_0^{1/2}.$$
Consequently, the definition (\ref{defi_per}) yields, with
sufficiently large $\{\lambda_i\}_{i=1}^N$: 
\begin{equation*}
\|\nabla (\tilde v-v)\|_0\leq \sum_{i=1}^N \|\nabla v_{i+1}-\nabla v_i\|_0 \leq
2 \|\sum_{i=1}^N  a_i\|_0 + C\sum_{i=1}^N \frac{\|\nabla a_i\|_0}{\lambda_i} 
\leq C \|\mathcal{D}\|_0^{1/2}.
\end{equation*}
In a similar fashion, and using the above bound, we obtain:
\begin{equation*}
\begin{split}
\|\nabla (\tilde w-w)\|_0& \leq \sum_{i=1}^N \|\nabla w_{i+1}-\nabla w_i\|_0 \leq
C\Big \|\sum_{i=1}^N  \big(\|\nabla v_i\|_0 +\|a_i\|_0\big) a_i\Big\|_0
\\ & \quad + C\sum_{i=1}^N \frac{\|\nabla a_i\|_0\|\nabla v_i\|_0 +
  \|a_i\|_0\|\nabla^2v_i\|_0 + \|a_i\|_0^2+\|a_i\|_0\|\nabla a_i\|_0}{\lambda_i}  
\\ & \quad + C \sum_{i=1}^N \frac{\|\nabla a_i\|_0\|\nabla^2 a_i\|_0 +
  \|a_i\|_0\|\nabla^3a_i\|_0 }{\lambda_i^2}  \\ & \leq C \|\mathcal{D}\|_0^{1/2}
\cdot \sup_{i=1\ldots N}\big(\|\nabla v_i\|_0+ \|a_i\|_0\big)
\leq C \|\mathcal{D}\|_0^{1/2} \big(\|\nabla v_0\|_0+ \|\mathcal{D}\|_0^{1/2}\big).
\end{split}
\end{equation*}
This ends the proof of \ref{Cbound2} and of the theorem.
\end{proof}

\smallskip

\noindent We remark that having the upgraded version of the ``step''
in Lemma \ref{lem_step} was irrelevant to the proof
above, and that the sub-optimal construction in \cite[Lemma
2.2]{lewpak_MA} would still suffice. 

\bigskip

\noindent We are now ready to give: 

\medskip

\noindent {\bf Proofs of Theorem \ref{th_final} and Theorem \ref{th_finalK}}

In order to apply Corollary \ref{th_NKH}, we need to increase the
regularity of $v$, $w$ and decrease  the deficit $\mathcal{D}$. Take
$\epsilon<1$ that is sufficiently small, as indicated below.
First, we let $v_1\in\mathcal{C}^\infty(\bar\omega,\R^k)$, $w_1\in\mathcal{C}^\infty(\bar\omega,\R^d)$ and 
$A_1\in \mathcal{C}^\infty(\bar\omega,\R^{d\times d}_{\sym})$ be such that:
\begin{equation*}
\begin{split}
&\|v_1- v\|_1\leq \epsilon^3,\quad \|w_1- w\|_1\leq \epsilon^3,\quad
\|A_1- A\|_0\leq \epsilon^3,\\
&\mathcal{D}_1= A_1 -\big(\frac{1}{2}(\nabla v_1)^T\nabla v_1 + \sym\nabla w_1\big)
> c_1\Id_d \quad\mbox{ for some }\; c_1>0.
\end{split}
\end{equation*}
The last property follows from the fact that:
\begin{equation}\label{osta}
\|\mathcal{D}_1-\mathcal{D}\|_0\leq 3\epsilon^3 + \epsilon^3 \|\nabla v\|_0 
\end{equation}
Second, use Theorem \ref{th_approx_nonlocal} to get $v_2\in\mathcal{C}^\infty(\bar\omega,\R^k)$,
$w_2\in\mathcal{C}^\infty(\bar\omega,\R^d)$ such that:
\begin{equation*}
\begin{split}
&\|v_2- v_1\|_0\leq \epsilon^3,\quad \|w_2- w_1\|_0\leq \epsilon^3,\\
& \|\nabla (v_2-v_1)\|_0\leq C\|\mathcal{D}_1\|_0^{1/2}\leq
C\big(\|\mathcal{D}\|_0^{1/2} + \epsilon^{3/2} + \|\nabla v\|_0^{1/2}\big),\\
&\mathcal{D}_2= A_1 -\big(\frac{1}{2}(\nabla v_2)^T\nabla v_2 + \sym\nabla w_2\big)
\quad\mbox{ satisfies }\; \|\mathcal{D}_2\|_0\leq \epsilon^3,
\end{split}
\end{equation*}
where we applied (\ref{osta}) in the gradient increment bound of $v$.

\smallskip

Clearly, if the deficit $\mathcal{D}_3$, defined below:
$$\mathcal{D}_3 = A- \big(\frac{1}{2}(\nabla v_2)^T\nabla v_2 + \sym\nabla w_2\big) $$
is equivalently zero on $\bar\omega$, then we may simply
take $\tilde v= v_2$ and $\tilde w=w_2$ to satisfy the claim of the
theorem. Otherwise, we use Corollary \ref{th_NKH} to $v_2$, $w_2$ and $A$, since:
$$0<\|\mathcal{D}_3\|_0 \leq \|A-A_1\|_0 + \|\mathcal{D}_2\|\leq 2\epsilon^3\leq 1,$$
and consequently obtain $\tilde v\in\mathcal{C}^{1,\alpha}(\bar\omega,\R^k)$,
$\tilde w\in\mathcal{C}^{1,\alpha}(\bar\omega,\R^d)$ such that:
\begin{equation*}
\begin{split}
&\|\tilde v - v_2\|_0\leq C\epsilon^{3/2},\\ &
\|\tilde w- w_2\|_0\leq C\epsilon^{3/2}(1+ \|\nabla v_2\|_0) \leq 
C\epsilon^{3/2}(1+ \|\mathcal{D}\|_0^{1/2} + \|\nabla v\|_0), 
\\ & A -\big(\frac{1}{2}(\nabla \tilde v)^T\nabla \tilde v + \sym\nabla
\tilde w\big) = 0 \quad\mbox{ in }\; \bar\omega.
\end{split}
\end{equation*} 
It now suffices to observe that, taking $\epsilon$ sufficiently small (in function of
$\|\mathcal{D}\|_0^{1/2}$, $\|\nabla v\|_0$ and of constants $C$
that depend only on $k$, $d$ and $\omega$), we get:
\begin{equation*}
\begin{split}
&\|\tilde v - v\|_0\leq \|\tilde v - v_2\|_0 + \| v_2 - v_1\|_0 + \|v_1 - v\|_0\leq
C\epsilon^{3/2} \leq \epsilon,\\ &
\|\tilde w - w\|_0\leq \|\tilde w - w_2\|_0 + \| w_2 - w_1\|_0 + \|w_1 - w\|_0
\leq C\epsilon^{3/2}(1+ \|\mathcal{D}\|_0^{1/2} + \|\nabla v\|_0) \leq \epsilon.
\end{split}
\end{equation*} 
The proof is done.
\endproof

\section{The Monge-Amp\`ere system: proofs of Lemma \ref{th_equiv},
  Lemma \ref{th_solve} and Theorem \ref{th_CI_weakMA}}\label{sec6}

We remark that when $d=2$ then the formula (\ref{RiemC2}) rests in agreement with 
the expansion of the Gaussian curvature:
$\kappa(\Id_2 + \epsilon A) = -\frac{\epsilon}{2} \curl\,\curl A +
O(\epsilon^2)$, because we have:
\begin{equation*}
\mathfrak{C}^2(A)_{ij,st}  = \left\{\begin{array}{ll} \curl\,\curl A &
    \mbox{ if }\, (ij,st)\in\{(12,12), (21,21)\},\\
-\curl\,\curl A & \mbox{ if }\, (ij,st)\in\{(12,21), (21,12)\},\\
0 &  \mbox{ otherwise.}
\end{array}
\right.
\end{equation*}

\noindent We now give:

\smallskip

\noindent {\bf Proof of Lemma \ref{th_equiv}}

The implication (i)$\Rightarrow$(ii) follows by a direct inspection:
\begin{equation*}
\begin{split}
2 \mathfrak{C}^2(\sym\nabla w)_{ij,st} = \; & \partial_i\partial_s (\partial_jw^t + \partial_tw^j)
+ \partial_j\partial_t (\partial_iw^s + \partial_sw^i) \\ & - \partial_i\partial_t (\partial_jw^s + \partial_sw^j)
- \partial_j\partial_s (\partial_iw^t + \partial_tw^i) =0.
\end{split}
\end{equation*}
To prove that (ii)$\Rightarrow$(i), note that condition:
$$\mathfrak{C}^2(A)_{ij,st}= \partial_i\big(\partial_sA_{jt}-\partial_tA_{js}\big)
- \partial_j\big(\partial_sA_{it} - \partial_tA_{is}\big)=0$$
implies, for each fixed $s,t:1\ldots d$, that the vector field
$[\partial_{s}A_{jt}-\partial_tA_{js}]_{j=1\ldots d} $ must be a gradient
of some scalar field $-\phi_{st}$, where we used the
Poincar\'e Lemma on the contractible domain $\omega$. We thus write:
\begin{equation}\label{uno}
\partial_s A_{jt} - \partial_tA_{js}= -\partial_j\phi_{st}\quad\mbox{
  for all }\; j,s,t=1\ldots d. 
\end{equation}
Observe that $\nabla(\phi_{st} + \phi_{ts})=0$, so without loss of
generality, $\phi_{st}=-\phi_{ts}$ and $\phi_{ss}=0$. The following
matrix field is thus skew-symmetric:
\begin{equation}\label{due}
\phi = [\phi_{st}]_{s,t=1\ldots d}: \omega\to \so(d).
\end{equation}
Consequently, permuting the indices in (\ref{uno}), leads to:
$$\partial_s A_{jt} - \partial_tA_{js} +\partial_j\phi_{st} = 0\quad\mbox{
  and } \quad\partial_t A_{js} - \partial_jA_{st} +\partial_s\phi_{tj} = 0.$$
Summing the above two expressions, we get:
$$\partial_s A_{tj} - \partial_jA_{ts} + \partial_{s}\phi_{tj}
- \partial_j\phi_{ts} = 0 \quad\mbox{ for all }\; j,s,t=1\ldots d. $$
Hence for any $i=1\ldots d$, the $i$-th row $[A_{ij} + \phi_{ji}]_{j=1\ldots d}$ of the
matrix field $A+\phi$ is a gradient of some scalar field $w^i$ on $\omega$, where
we again used Poincar\'e's Lemma. Writing $w=[w^i]_{i=1\ldots d}:\omega\to\R^d$,
we obtain the claim by taking the symmetric parts of the resulting identity:
$$A+\phi = \nabla w,$$
and invoking the skew-symmetry in  (\ref{due}). The proof is done.
\endproof

\smallskip

\noindent Note that the operator $\mathfrak{C}^2$ can be interpreted as $\curl\,\curl$, in any dimension $d$.
To see this, recall that when $d=2,3$ then the coefficients of $\curl \,w$ for a vector
field $w=[w^i]_{i=1\ldots d}$, coincide with the coefficients
of the exterior derivative $ \mathrm{d}\alpha =
\sum_{i<j}\big(\partial_iw^j-\partial_jw^i\big)\mathrm{d}x_i\wedge
\mathrm{d}x_j$ of the 1-form $\alpha=\sum_{i=1}^dw^i \mathrm{d}x_i$.
Given a matrix field $A$ in any dimension $d$, we may still apply $\mathrm{d}$ row-wise:
$$\mathrm{d}A=\Big[\sum_{s<t} \big(\partial_sA_{it} - \partial_tA_{is}\big)\mathrm{d}x_s\wedge
\mathrm{d}x_t\Big]_{i=1\ldots d},$$ 
returning a vector of 2-forms, and then apply $\mathrm{d}$ to each vector of coefficients in $\mathrm{d}A$:
\begin{equation*}
\begin{split}
\mathrm{d}^2A & = \Big[\sum_{i<j} \big(\partial_i(\partial_sA_{jt} - \partial_tA_{js})
- \partial_j(\partial_sA_{it} - \partial_tA_{is})\big) \mathrm{d}x_i\wedge
\mathrm{d}x_j\Big]_{s<t:1\ldots d} \\ & 
= \Big[\sum_{i<j} \mathfrak{C}^2(A)_{ij,st} \mathrm{d}x_i\wedge \mathrm{d}x_j \Big]_{s<t:1\ldots d}.
\end{split}
\end{equation*}

\medskip

\noindent Next, we show the equivalent solvability conditions
determining the range of $\mathfrak{C}^2$:

\smallskip

\noindent {\bf Proof of Lemma \ref{th_solve}}

The implication (i)$\Rightarrow$(ii) follows by a direct
inspection. To prove (ii)$\Rightarrow$(i), observe first that for a
skew-symmetric matrix field $B:\omega\to\R^{d\times
  d}_{\mathrm{skew}}$ to be of the form: 
$B= (\nabla w)^T - \nabla w$ for some $w=[w^i]_{i=1\ldots d}:\omega\to\R^d$, the
sufficient and necessary condition is: 
\begin{equation}\label{tre}
\partial_iB_{jq} + \partial_jB_{qi}+\partial_qB_{ij} = 0\quad\mbox{ for all }\; i,j,q=1\ldots d.
\end{equation}
This claim follows by taking the exterior derivative of the 2-form in:
\begin{equation*}
\begin{split}
\mathrm{d}\big(\sum_{j,q=1\ldots d}B_{jq}\mathrm{d}x_j\wedge\mathrm{d}x_q\big)
& = \sum_{i,j,q=1\ldots d}\partial_i B_{jq}\mathrm{d}x_i\wedge \mathrm{d}x_j\wedge\mathrm{d}x_q
\\ & = 2\sum_{i<j<q:1\ldots d}(\partial_i B_{jq} + \partial_jB_{qi}
+ \partial_qB_{ij})\mathrm{d}x_i\wedge \mathrm{d}x_j\wedge\mathrm{d}x_q,
\end{split}
\end{equation*}
where we used the skew-symmetry assumption, and invoking Poincar\'e's
Lemma on the contractible domain $\omega$.

For every $s,t=1\ldots d$ we apply the above criterion to $B=[F_{ij,st}]_{i,j=1\ldots
  d}$. Since the first and third conditions in (\ref{sym_B}) validate
the skew-symmetry of $B$ and (\ref{tre}), we get existence of $d^2$ vector fields
$\phi_{st}=[\phi^j_{st}]_{j=1\ldots d}$  on $\omega$, satisfying:
\begin{equation}\label{quattro}
F_{ij,st} = \partial_i\phi_{st}^j - \partial_j\phi^i_{st} \quad\mbox{ for all }\; i,j,s,t=1\ldots d.
\end{equation}
By the first condition in (\ref{sym_B}), we note that
$\partial_i(\phi^j_{st} + \phi^j_{ts}) - \partial_j(\phi^i_{st} +
\phi^i_{ts}) = 0$ for all $i,j,s,t$, which implies that
each $\phi_{st}+\phi_{ts}$ is a gradient. Thus, without loss of generality 
we may take:
$$\phi_{st} = -\phi_{ts} \quad\mbox{ for all }\; s,t=1\ldots d.$$

For every $t=1\ldots d$ consider now the skew-symmetric matrix field
$B=[\phi^j_{st} - \phi^s_{jt}]_{j,s=1\ldots d}$. Condition (\ref{tre})
holds, in virtue of (\ref{quattro}) and the second condition in (\ref{sym_B}):
$$\partial_i(\phi^j_{st} - \phi^s_{jt}) + \partial_j(\phi^s_{it} - \phi^i_{st}) + \partial_s(\phi^i_{jt} - \phi^j_{it})
= F_{ij,st} + F_{si,jt} + F_{js,it}=0,$$
and so there follows existence of vector fields $\eta_t = [\eta_t^s]_{s=1\ldots d}$ on $\omega$, such that:
\begin{equation}\label{cinque}
\phi^j_{st} - \phi^s_{jt} = \partial_{j}\eta^s_t-\partial_s\eta^t_j \quad\mbox{ for all }\; j,s,t=1\ldots d.
\end{equation}
We now finally define:
$$A_{ij} = -\frac{1}{2}(\eta^i_j + \eta^j_i) \quad\mbox{ for all }\; i,j=1\ldots d.$$
The matrix field $A=[A_{ij}]_{i,j=1\ldots d}$ is obviously symmetric,
and from (\ref{cinque}) and (\ref{quattro}) we get:
\begin{equation*}
\begin{split}
2\mathfrak{C}^2(A)_{ij,st} & = -\partial_i\partial_s(\eta^j_t+\eta^t_j) - \partial_j\partial_t(\eta^i_s+\eta^s_i)
+ \partial_i\partial_t(\eta^j_s+\eta^s_j) + \partial_j\partial_s(\eta^i_t+\eta^t_i)
\\ & = \partial_t(\phi^i_{js} - \phi^j_{is}) + \partial_j(\phi^s_{ti} - \phi^t_{si}) 
+ \partial_s(\phi^j_{it} - \phi^i_{jt}) + \partial_i(\phi^t_{sj} - \phi^s_{tj})  
\\ & = F_{ti,js} + F_{js,ti} + F_{tj,si} + F_{si,tj} = 2(F_{ti,js} +F_{tj,si}) = 
2 F_{st,ij} = 2F_{ij,st}
\end{split}
\end{equation*}
for all $i,j,s,t=1\ldots d$, where in the last three equalities above we used the first and second
conditions in (\ref{sym_B}). The proof is done.
\endproof

\medskip

\noindent Observe that for $d=3$ and $k=1$, any choice of $6$ functions $F_{12,12},
F_{12,13}, F_{12,23}, F_{13,13}$, $F_{13,23}$, $F_{23,23}\in L^2(\omega,\R)$ gives
raise to $F\in L^2(\omega,\R^{81})$ satisfying (\ref{sym_B}). Indeed,
the first condition holds by defining the remaining components of $F$
appropriately, while the second and the third conditions are implied
automatically by these symmetries. In this case, (\ref{MA}) consists of 6 equations in a
single unknown $v\in\R$, while (\ref{VK}) consists of $6$ equations
in $4$ unknowns $(v,w)\in \R^4$. Although both formulations
seem to be largely overdetermined, this paper actually shows that the set of
their solutions is dense in the space of continuous functions on $\bar\omega$.

\medskip

\noindent We are now ready to give:

\smallskip

\noindent {\bf Proof of Theorem \ref{th_CI_weakMA}}

By the construction in Theorem \ref{th_solve}, there exists a
matrix field $A\in \mathcal{C}^{1,1}(\bar\omega,\R^{d\times d}_\sym)$ such that
$\mathfrak{C}^2(A) = -F$. Given $v\in \mathcal{\mathcal{C}}^1(\bar\omega,\R^k)$, we 
apply Theorem \ref{th_final}, or Theorem \ref{th_finalK} in case of
codimension $k\geq 2d_*$, to $\epsilon=1/n$ and $v$, $w=0$,
$A+C\Id_d$. The constant $C>0$ is taken large enough to have, in the sense of matrix inequalities:
$$A + {C}\Id_d > \frac{1}{2}(\nabla v)^T\nabla v \quad \mbox{ on }\; \bar\omega.$$
The resulting $v_n=\tilde v$ provides the $n$-th
member of the claimed approximating sequence for $v$. When $v\in\mathcal{C}^0(\bar\omega,\R^k)$,
the sequence is obtained using a density argument.
\endproof

\section{Energy scaling bound for thin multidimensional films: proof
  of Theorem \ref{th_scaling}}\label{sec_appli}

In this section, we estimate the infimum of the energy
$\mathcal{E}^h(u)$ defined in (\ref{Eh}), interpreted as the averaged pointwise deficit of a weakly
regular immersion $u$ from being the
orientation preserving isometric immersion of the metric $g^h$ on
$\Omega^h$. When $d=2$, $k=1$ then $\mathcal{E}^h(u)$ is
the elastic energy (per unit thickness) of the deformation $u$ of a thin
film with midplate $\omega$, thickness $2h$, elastic energy density
$W$, and the prestrain tensor $g^h$. 

\medskip

\noindent {\bf Proof of Theorem \ref{th_scaling}}

{\bf 1.} Fix $\alpha\in \big(0,\frac{1}{1+s}\big)$. By Theorem \ref{th_final}, 
there exists $v\in\mathcal{C}^{1,\alpha }(\bar\omega,\R^k)$
and $w\in \mathcal{C}^{1,\alpha}(\bar\omega,\R^d)$, solving (\ref{VK})
with the right hand side given by the $d\times d$ principal minor of $S$:
\begin{equation}\label{exactMAM}
\frac{1}{2}(\nabla v)^T\nabla v + \sym\nabla w = S_{d\times d}.
\end{equation}
We regularize $v,w$ to
$v_\epsilon\in\mathcal{C}^\infty(\bar\omega,\R^k)$, $w_\epsilon\in \mathcal{C}^\infty(\bar\omega,\R^d)$
convolving with the kernels $\{\phi_\epsilon(x) \}_{\epsilon\to 0}$ as in Lemma \ref{lem_stima},
where  $\epsilon$ is a positive power $t$ of $h$, to be chosen later:
$$ v_\epsilon= v *\phi_\epsilon,\quad w_\epsilon=w*\phi_\epsilon, \quad \epsilon=h^t.$$
By \ref{stima2} and a version of \ref{stima4} in: $\|(fg)*\phi_\epsilon - (f*\phi_\epsilon)
(g*\phi_\epsilon)\|_0\leq C\epsilon^{2\alpha} \|f\|_{0,\alpha} \|g\|_{0,\alpha} $, we get:
\begin{equation*}
\begin{split}
\big\| \frac{1}{2}(\nabla v_\epsilon)^T&\nabla v_\epsilon  +
\sym\nabla w_\epsilon - S_{d\times d}\big\|_0\\ & \leq \big\|
\frac{1}{2}(\nabla v_\epsilon)^T\nabla v_\epsilon + \mathrm{sym}\,\nabla w_\epsilon - S_{d\times
  d}*\phi_\epsilon\big\|_0 + \| S_{d\times d}*\phi_\epsilon- S_{d\times d}\|_0.
\end{split}
\end{equation*}
Since $\mathrm{sym}\,\nabla w_\epsilon - S_{d\times d}*\phi_\epsilon=
- \frac{1}{2}\big((\nabla v)^T\nabla v\big)*\phi_\epsilon$, this leads to:
\begin{equation}\label{m1}
\begin{split}
\big\| \frac{1}{2}(\nabla v_\epsilon)^T&\nabla v_\epsilon  +
\sym\nabla w_\epsilon - S_{d\times d}\big\|_0\leq
C\epsilon^{2\alpha}\|\nabla v\|_{0,\alpha}^2 + C\epsilon^2 \|\nabla^2S_{d\times d}\|_0\leq C\epsilon^{2\alpha}.
\end{split}
\end{equation}
 Further, by \ref{stima1} and a version of \ref{stima2} in: $\|\nabla
 (f-f*\phi_\epsilon)\|_0\leq C\epsilon^{\alpha-1}\|f\|_{0,\alpha}$, we obtain:
\begin{equation}\label{m2}
\|\nabla v_\epsilon\|_0 + \|\nabla w_\epsilon\|_0 \leq C,\qquad 
\|\nabla^2v_\epsilon\|_0+ \|\nabla^2w_\epsilon\|_0 \leq C\epsilon^{\alpha-1}.
\end{equation}

\medskip

{\bf 2.} Denote $\delta=\gamma/2$ and define $u^h\in \mathcal{C}^\infty(\bar \Omega^h,\R^{d+k})$ as follows: 
\begin{equation*}\label{recseq0}
\arraycolsep=3.4pt\def\arraystretch{1.4}
\begin{split}
u^h(x,z) = \; &  id_{d+k} + h^{\delta/2}\left[\begin{array}{c} 0\\
    v_\epsilon\end{array}\right] + h^{\delta}\left[\begin{array}{c} w_\epsilon\\ 0\end{array}\right] 
\\ & + \Big(h^{\delta/2} \left[\begin{array}{c} -(\nabla v_\epsilon)^T\\ 0\end{array}\right]
+ h^{\delta} \left[\begin{array}{c} 2 S_{d\times k}\\ S_{k\times
      k}-\frac{1}{2}(\nabla v_\epsilon)(\nabla v_\epsilon)^T\end{array}\right] +h^{3\delta/2} B(x)\Big)z
\\ & \hspace{-1.8cm} \mbox{where we denote: }\; 
S=\left[\begin{array}{c|c} S_{d\times  d} & S_{d\times k} 
    \\ \hline S_{k\times d} & S_{k\times k}\end{array}\right],
\end{split}
\end{equation*}
and where the higher order correction field $B\in\mathcal{C}^\infty (\bar\omega, \R^{(d+k)\times k})$ is given by:
$$\arraycolsep=3.4pt\def\arraystretch{1.4} 
B(x) = \left[\begin{array}{c} -(\nabla v_\epsilon)^TS_{k\times k}+\frac{1}{2}(\nabla
v_\epsilon)^T(\nabla v_\epsilon)(\nabla v_\epsilon)^T + (\nabla w_\epsilon)^T(\nabla
v_\epsilon)^T\\  \hline 2\sym \big((\nabla v_\epsilon) S_{d\times k}\big)
\end{array}\right].$$

It follows that for all $x\in\bar\omega$ and $z\in B(0,1)\subset\R^k$ there holds:
\begin{equation*}
\arraycolsep=3.4pt\def\arraystretch{1.4}
\begin{split}
\nabla u^h(x, hz) = & \;\Id_{d+k} + h^{\delta/2} \left[\begin{array}{c|c} 
   0& -(\nabla v_\epsilon)^T\\ \hline \nabla v_\epsilon &0 \end{array}\right] +
h^\delta \left[\begin{array}{c|c} \nabla w_\epsilon & 2S_{d\times k} \\ \hline 0 &
    S_{k\times k}-\frac{1}{2}(\nabla v_\epsilon)(\nabla  v_\epsilon)^T\end{array}\right] 
\\ & + h^{3\delta/2} \left[\begin{array}{c|c}   0& B \end{array}\right]
- h^{1+\delta/2} \left[\begin{array}{c|c} \big[\langle\partial_i\partial_j
    v_\epsilon,z\rangle\big]_{i,j=1\ldots d} &0\\ \hline 0&0\end{array}\right]
\\ & +{O}\big(h^{1+\delta})(1+ \|\nabla^2v_\epsilon\|_0)+O(h^{1+3\delta/2})\|\nabla^2w_\epsilon\|_0.
\end{split}
\end{equation*}
We now observe that: $(g^h)^{-1/2}=\Id_{d+k} - h^\delta S + {O}(h^{2\delta})$, and proceed with computing:
\begin{equation*}
\arraycolsep=3.4pt\def\arraystretch{1.4}
\begin{split}
\big(\nabla u^h(g^h)^{-1/2}\big)(x, hz) = & \;\Id_{d+k} + P^h +
h^\delta \left[\begin{array}{c|c} \nabla w_\epsilon -S_{d\times d} & 0 \\ \hline 0 &
-\frac{1}{2}(\nabla v_\epsilon)(\nabla v_\epsilon)^T\end{array}\right] \\ & +
h^{3\delta/2} \left[\begin{array}{c|c} (\nabla v_\epsilon)^T S_{k\times d} &
    \frac{1}{2}(\nabla v_\epsilon)(\nabla v_\epsilon)^T\nabla v_\epsilon + (\nabla
    w_\epsilon)^T(\nabla v_\epsilon)^T\\ \hline -(\nabla v_\epsilon)S_{d\times d} &
S_{k\times d} (\nabla v_\epsilon)^T\end{array}\right] \\ &
-  h^{1+\delta/2}\left[\begin{array}{c|c} \big[\langle\partial_i\partial_j
    v_\epsilon,z\rangle\big]_{i,j=1\ldots d}  &0\\ \hline 0&0\end{array}\right]\\ & 
+{O}(h^{2\delta})  +{O}(h^{1+\delta}) (1+ \|\nabla^2v_\epsilon\|_0+\|\nabla^2w_\epsilon\|_0).
\end{split}
\end{equation*}
Above, we used the following skew-symmetric matrix field:
$$\arraycolsep=3.4pt\def\arraystretch{1.4} 
P^h = \left[\begin{array}{c|c} 0 & p^h \\ \hline -(p^h)^T & 0\end{array}\right],
\qquad p^h= - h^{\delta/2} (\nabla v_\epsilon)^T+h^\delta S_{d\times k}. $$
For future purpose, it is convenient to compute:
$$ \arraycolsep=3.4pt\def\arraystretch{1.4}
(P^h)^2 = - h^\delta\left[\begin{array}{c|c} (\nabla v_\epsilon)^T\nabla v_\epsilon
& 0 \\ \hline 0 & (\nabla v_\epsilon)(\nabla v_\epsilon)^T \end{array}\right] 
+ 2h^{3\delta/2}\sym \left[\begin{array}{c|c} (\nabla v_\epsilon)^TS_{k\times d} & 0 \\ \hline
0 & (\nabla v_\epsilon)S_{d\times k}\end{array}\right] 
+ O(h^{2\delta}).$$

\medskip

{\bf 3.}  Consider the rotation fields $Q^h\in\mathcal{C}^\infty(\bar \omega, \mathrm{SO}(d+k))$, defined by:
$$Q^h = \exp(-P^h) = \Id_{d+k} - P^h +\frac{1}{2}(P^h)^2-\frac{1}{6}(P^h)^3 + {O}(h^{2 \delta}). $$
Then we get:
\begin{equation*}
\arraycolsep=3.4pt\def\arraystretch{1.4}
\begin{split}
\big(Q^h \nabla u^h(g^h)^{-1/2}\big)(x, hz) = \; & \Id_{d+k} + h^\delta \left[\begin{array}{c|c} \frac{1}{2}(\nabla
    v_\epsilon)^T\nabla v_\epsilon+\nabla w_\epsilon -S_{d\times d} &  0 \\ \hline 0 &
0 \end{array}\right] \\ & + h^{3\delta/2} \left[\begin{array}{c|c}
\mbox{skew}\big((\nabla v_\epsilon)^TS_{k\times d}\big) &  (\nabla
w_\epsilon)^T(\nabla v_\epsilon)^T\\ \hline -  (\nabla v_\epsilon)\nabla
w_\epsilon & \mbox{skew}\big(S_{k\times d}(\nabla v_\epsilon)^T\big) \end{array}\right] +\frac{1}{3}(P^h)^{3} \\ & 
- h^{1+\delta/2}\left[\begin{array}{c|c} \big[\langle\partial_i\partial_j
    v_\epsilon, z\rangle\big]_{i,j=1\ldots d} &0\\ \hline 0&0\end{array}\right] \\ & 
+ {O}(h^{2\delta})  + {O}(h^{1+\delta} )(1+\|\nabla^2v_\epsilon\|_0+\|\nabla^2w_\epsilon\|_0).
\end{split}
\end{equation*}
Finally, we apply another rotation field $\bar Q^h\in \mathcal{C}^\infty(\bar \omega, \mathrm{SO}(d+k))$:
\begin{equation*}
\arraycolsep=3.4pt\def\arraystretch{1.4}
\begin{split}
& \bar Q^h = \exp(-\bar P^h) = \Id_{d+k} - \bar P^h + {O}(h^{2 \delta}), \\
& \mbox{where }\; \bar P^h = \left[\begin{array}{c|c}
    \mbox{skew}\big(h^\delta\nabla w_\epsilon  + h^{3\delta/2}(\nabla  v_\epsilon)^TS_{k\times d}\big)
&  h^{3\delta/2} (\nabla w_\epsilon)^T(\nabla v_\epsilon)^T \\ \hline - h^{3\delta/2} (\nabla v_\epsilon)\nabla w_\epsilon &
h^{3\delta/2} \mbox{skew}\big(S_{k\times d}(\nabla v_\epsilon)^T\big)\end{array}\right] + \frac{1}{3}(P^h)^3,
\end{split}
\end{equation*}
to get:
\begin{equation*}
\arraycolsep=3.4pt\def\arraystretch{1.4}
\begin{split}
\big(\bar Q^h Q^h \nabla u^h(g^h)^{-1/2}\big)&(x, hz) = \Id_{d+k} +
h^\delta \left[\begin{array}{c|c} \frac{1}{2}(\nabla
 v_\epsilon)^T\nabla v_\epsilon+\sym\nabla w_\epsilon -S_{d\times d} & 0 \\ \hline 0 & 0 \end{array}\right] 
\\ & - h^{1+\delta/2}\left[\begin{array}{c|c} \big[\langle\partial_i\partial_j
    v_\epsilon,z\rangle\big]_{i,j=1\ldots d}  &0\\ \hline 0&0\end{array}\right]\\ & 
+ {O}(h^{2\delta}) + {O}(h^{1+\delta}) (1+\|\nabla^2v_\epsilon\|_0+\|\nabla^2w_\epsilon\|_0).
\end{split}
\end{equation*}

\medskip

{\bf 4.} In conclusion, we obtain the following energy bound below,
valid provided that we may use Taylor's expansion of $W$ up to second
order in perturbation of $\Id_{d+k}$, which here holds when
$h^{1+\delta/2}(\|\nabla^2v_\epsilon\|_0+\|\nabla^2w_\epsilon\|_0\big)\to
0$ as $h\to 0$, implied by $\lim_{h\to 0} \big(h^{1+\delta/2}\epsilon^{\alpha-1}\big)=0$:
\begin{equation*}
\begin{split}
& \inf \mathcal{E}^h \leq \mathcal{E}^h(u^h) = \fint_{\Omega^1}W\Big(\bar Q^h Q^h \nabla
u^h(g^h)^{-1/2}(x, hz) \Big)\;\mbox{d}(x,z) \\ & ~~ \leq
C\fint_{\Omega^1} \Big(h^{2\delta} \big|\frac{1}{2}(\nabla
 v_\epsilon)^T\nabla v_\epsilon+\sym\nabla w_\epsilon -S_{d\times d}
 \big|^2 + h^{2+\delta}(\|\nabla^2v_\epsilon\|_0^2 
+\|\nabla^2w_\epsilon\|_0^2) + h^{4\delta}\Big)\;\mbox{d}(x, z).
\end{split}
\end{equation*}
Recalling (\ref{m1}) and (\ref{m2}), the obtained bound further leads to:
$$\inf \mathcal{E}^h \leq C\big(h^{2\delta}\epsilon^{4\alpha}+h^{2+\delta}\epsilon^{2\alpha-2}+h^{4\delta}\big)
= C\big(h^{2\delta+4\alpha t}+h^{2+\delta+(2\alpha-2)t}+h^{4\delta}\big).$$

Minimizing the right hand side above is equivalent to
maximizing the minimal of the three exponents. For $\delta<2$, we hence
choose the exponent $t$ in $\epsilon=h^t$ so that
${2\delta+4\alpha t}={2+\delta+(2\alpha-2)t}$, namely
$t=\frac{2-\delta}{2\alpha+2}$. Consequently, we get:
$$\inf \mathcal{E}^h \leq C\big(h^{2\frac{\delta+2\alpha}{\alpha+1}}+ h^{4\delta}\big)\leq
C\big(h^{\big(4+ 2\frac{(1+s)(\delta-2)}{2+s}\big)-} +h^{4\delta}\big) $$
upon recalling the range of admissible exponents $\alpha$.  On the
other hand, when $\delta\geq 2$, then we choose $t$ close to $0$.
The conclusion of Theorem \ref{th_scaling} follows by a direct inspection.
\endproof

\end{document}